\newcommand{\Rb}{\boldsymbol{\mathrm{R}}}
\newcommand{\Lb}{\boldsymbol{\mathrm{L}}}
\newcommand{\Hb}{\mathbb{H}}
\newcommand{\coker}{\operatorname{coker}}
\newcommand{\cont}{\mathrm{cont}}
\newcommand{\crys}{\mathrm{crys}}
\newcommand{\Fcrys}{{\mathrm{crys},F}}
\newcommand{\Z}{\mathbb{Z}}
\newcommand{\E}{\mathscr{E}}
\newcommand{\C}{\mathscr{C}}
\newcommand{\Rlim}{\Rb\mathrm{lim}}
\newcommand{\F}{\mathbb{F}}
\newcommand{\Q}{\mathbb{Q}}
\newcommand{\X}{\mathscr{X}}
\newcommand{\B}{\mathscr{B}}
\renewcommand{\O}{\mathscr{O}}
\renewcommand{\L}{\mathscr{L}}
\newcommand{\Fs}{\mathscr{F}}
\newcommand{\Gs}{\mathscr{G}}
\newcommand{\Y}{\mathcal{Y}}
\newcommand{\Ps}{\mathscr{P}}
\newcommand{\A}{\mathbb{A}}
\newcommand{\et}{\text{\'{e}t}}
\newcommand{\p}{\mathfrak{p}}
\newcommand{\Frac}{\mathrm{Frac}}
\newcommand{\m}{\mathfrak{m}}
\newcommand{\ov}{\overline}
\newcommand{\wt}{\widetilde}
\newcommand{\Spec}{\operatorname{Spec}}
\newcommand{\spe}{\mathrm{sp}}
\newcommand{\Spa}{\operatorname{Spa}}
\newcommand{\Spf}{\operatorname{Spf}}
\newcommand{\Hom}{\operatorname{Hom}}
\newcommand{\End}{\operatorname{End}}
\newcommand{\Pic}{\operatorname{Pic}}
\newcommand{\NS}{\operatorname{NS}}
\newcommand{\Ab}{\mathrm{Ab}}
\newcommand{\M}{\mathscr{M}}
\newcommand{\N}{\mathscr{N}}
\newcommand{\chara}{\operatorname{char}}
\newcommand{\Mod}{\operatorname{Mod}}
\newcommand{\Res}{\operatorname{Res}}
\theoremstyle{plain}
\newtheorem{thm}{Theorem}
\numberwithin{thm}{subsection} 
\newtheorem{prop}[thm]{Proposition}
\newtheorem{lem}[thm]{Lemma}
\newtheorem{cor}[thm]{Corollary}
\theoremstyle{definition}
\newtheorem{defi}[thm]{Definition}
\theoremstyle{remark}
\newtheorem{assumption}{Assumption}
\newtheorem{rmk}[thm]{Remark}
\newtheorem*{statement}{Statement}
\newtheorem{claim}{Claim}
\newtheorem{setup}[thm]{Setup}
\newtheoremstyle{warnings}{}{}{\color{red}}{}{\color{red}\bfseries}{.}{ }{}
\theoremstyle{warnings}
\title{Specialization of N\'{e}ron-Severi groups in characteristic $p$}
\subjclass[2010]{Primary 14C22; Secondary 14F30, 14G17}
\keywords{N\'{e}ron-Severi group, Picard number, jumping locus, specialization}
\author{Atticus Christensen}
\address{Department of Mathematics, Massachusetts Institute of Technology, Cambridge, MA 02139-4307, USA}
\email{atticusc@mit.edu}
\urladdr{http://math.mit.edu/~atticusc/}
\thanks{This research was supported in part by a grant from the Simons Foundation (\#402472 to Bjorn Poonen).}
\date{October 15, 2018}
\begin{document}
\begin{abstract}
   Andr\'{e} and Maulik--Poonen proved that for any smooth proper family $X\to B$ of varieties over an algebraically closed field of characteristic $0$, there is a closed fiber whose N\'{e}ron-Severi group has the same rank as that of the N\'{e}ron-Severi group of the geometric generic fiber. We prove the analogous statement over algebraically closed fields of characteristic $p>0$ which are not isomorphic to $\overline{\F}_p$. Furthermore, we prove that for any algebraically closed field $k$ of characteristic $p>0$ and smooth proper family $X\to B$ of $k$-varieties, there exists a dense open subvariety $U\subseteq B$ and integer $N$ such that for each map $x\colon\Spec k[[t]]\to U$, the $p$-torsion in the cokernel of the specialization map from the N\'{e}ron-Severi group of the pullback of $X$ to the geometric generic fiber of $x$ to the N\'{e}ron-Severi group of the pullback of $X$ to the special fiber of $x$ is killed by $p^N$. Finally, we prove that for a curve $C$ over $k$ and family $\mathscr{X}\to\mathscr{B}$ of smooth $C$-schemes, there exists a dense Zariski open $U\subseteq \mathscr{B}$ such that for a local uniformizer $t$ at any closed point of $C$, the rank of the N\'{e}ron-Severi group jumps only on a $t$-adic nowhere dense set $t$. The crystalline Lefschetz $(1,1)$ theorem of Morrow is a key ingredient in the proofs.
 \end{abstract}
\maketitle
\tableofcontents

\section{Introduction}
For a proper variety $X$ over an algebraically closed field, we let $\NS(X)$ denote the N\'{e}ron-Severi group. Recall $\NS(X)$ is a finitely generated abelian group. Let $\rho(X)$ denote the rank of $\NS(X)$. The main theorem of the paper (for notation used see Section \ref{sec:background}).
\begin{thm}\label{thm:1}
  Let $K$ be an algebraically closed field of characteristic $p$ such that $K$ is not isomorphic to $\ov{\F_p}$. Let $S$ be finite type $K$-scheme and $X\to S$ a smooth morphism. Denote a geometric generic point of $S$ by $\ov{\eta}$. Then there is an $s\in S(K)$ such that $\rho(X_{\ov{\eta}})=\rho(X_s)$.
\end{thm}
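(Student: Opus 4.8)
The plan is to reduce the statement, by a sequence of d\'evissages, to a family over a curve over a finitely generated field, and there to invoke Morrow's crystalline Lefschetz $(1,1)$ theorem together with a $p$-adic Baire-category argument.

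First I would make the usual normalizations: shrinking $S$, we may assume it is smooth, integral and affine; by compactifying $X\to S$ relatively, blowing up, and altering $S$ --- operations that change neither $\rho(X_{\ov\eta})$ nor the existence of a good closed point --- we may assume $X\to S$ is smooth and proper. Put $\rho_0:=\rho(X_{\ov\eta})$. The specialization maps on N\'eron--Severi groups are injective (for the prime-to-$p$ part via smooth proper base change in $\ell$-adic cohomology, for the $p$-part via crystalline cohomology), so $\rho(X_s)\ge\rho_0$ for all $s\in S(K)$ and we must produce an $s$ realizing equality. Next I would reduce to $\dim S=1$ by induction on $\dim S$: if $\dim S\ge 2$, choose a dominant morphism $S\to T$ onto a smooth integral $T$ of dimension $\dim S-1$ whose generic fibre is a geometrically integral smooth curve; the field $\ov{\kappa(\eta_T)}$ is algebraically closed of characteristic $p$ and of positive transcendence degree over $\F_p$, hence not isomorphic to $\ov{\F_p}$, so the curve case (proved below, for every such field) applied to $X_{\ov{\kappa(\eta_T)}}\to S_{\ov{\kappa(\eta_T)}}$ produces a closed point, i.e.\ a subvariety $W\subseteq S$ dominating $T$ with $\rho(X_{\ov\eta_W})=\rho_0$; then apply the inductive hypothesis to the restriction of $X$ over the smooth locus of $W$.

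Now assume $C:=S$ is a curve. Spread $X\to C$ out to a smooth proper $\mathcal X\to\mathcal C$ and a smooth morphism $\mathcal C\to B$ of relative dimension $1$, all over a finite field $\F_q$, together with a dominant $\Spec K\to B$ recovering $X\to C$ after base change; then $\F_q(B)\hra K$, hence $\ov{\F_q(B)}\hra K$. If $\dim B=0$ (the family descends to $\F_q$), pick a field embedding $\F_q(\mathcal C)\hra K$ --- possible since $\F_q(\mathcal C)$ has transcendence degree $1$ over $\F_q$, $K$ has positive transcendence degree over $\F_p$, and $K$ is algebraically closed --- giving $s\in C(K)$ whose image in $\mathcal C$ is the generic point; then $X_s\cong\mathcal X_{\eta_{\mathcal C}}\times_{\F_q(\mathcal C)}K$, and invariance of the geometric Picard number under extensions of algebraically closed fields (applied with $\ov{\F_q(\mathcal C)}\hra K$) gives $\rho(X_s)=\rho(\mathcal X_{\ov\eta_{\mathcal C}})=\rho_0$, as desired. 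Otherwise $\dim B=m\ge 1$; replacing $K$ by $\ov{\F_q(B)}$ we may take $K=\ov F$ with $F:=\F_q(B)$. A closed point of the curve $\mathcal C_{\eta_B}$ over $F$, with an $F$-embedding of its residue field into $K=\ov F$, yields $s\in C(K)$, and chasing the fibre products shows that $\rho(X_s)$ equals the geometric Picard number of the fibre of $\mathcal X\to\mathcal C$ over the generic point of the associated multisection $D\subseteq\mathcal C$ of $\mathcal C\to B$. Hence it suffices to find a multisection $D$ with generic point outside the jumping locus $J:=\{p\in\mathcal C:\rho(\mathcal X_{\ov p})>\rho_0\}$.

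Morrow's crystalline Lefschetz $(1,1)$ theorem, in its variational form, identifies line bundles on a fibre with the crystalline cohomology classes that are Frobenius-fixed and horizontal for the Gauss--Manin connection, and shows that such a class on one fibre comes, up to a $p$-power, from a class over an \'etale neighbourhood; from this one deduces that $J$ is contained in a countable union $\bigcup_i Z_i$ of proper closed subsets of $\mathcal C$, indexed by the sub-isocrystals of the second relative crystalline cohomology of $\mathcal X/\mathcal C$ that can support extra algebraic classes, with each $Z_i$ the closed locus where the corresponding class becomes algebraic. The crux --- and the step I expect to be hardest --- is to find a multisection $D$ with $D\not\subseteq Z_i$ for all $i$: over the finite field $\F_q$ the multisections form only a countable set, so one cannot simply avoid countably many conditions, and this is precisely the characteristic-$p$ form of the obstacle that André and Maulik--Poonen had to overcome over countable fields of characteristic $0$. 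The remedy is to pass to a $p$-adic (equivalently $t$-adic) neighbourhood, where the relevant parameter space becomes an uncountable complete valued field on which Baire-category arguments apply: taking a dominant map $B\to B_0$ onto a curve over $\F_q$, a closed point of $B_0$ with uniformizer $t$, and its formal disc, the $t$-adic nowhere-density of the N\'eron--Severi jumping locus along this disc (the third result stated above), together with the boundedness of the $p$-primary cokernel of the specialization maps it produces (the second result stated above), yields a $t$-adic point of $\mathcal C$ avoiding $J$; by approximation and algebraization this descends to an honest multisection $D$, and unwinding the identifications of the previous paragraph gives $s\in S(K)$ with $\rho(X_s)=\rho_0$.
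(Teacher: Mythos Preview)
Your overall strategy---invoke the $t$-adic nowhere-density of the jumping locus and the uniform bound on $p$-torsion cokernels, then approximate by an algebraic point---is the paper's strategy, and the ingredients you cite are the right ones. But your d\'evissage is mismatched with these tools, and the last step has a real gap.

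Your induction on $\dim S$ reduces to $S=C$ a curve over $K$, but does nothing to bound the transcendence degree of $K$. When you spread $C$ out over $\F_q$ to $\mathcal C\to B$, the base $B$ has $\dim B=m$ roughly equal to $\operatorname{trdeg}(K/\F_p)$, which may well be $\ge 2$. A point of $C(K)$ corresponds, as you correctly say, to a multisection $D\subset\mathcal C$ of $\mathcal C\to B$, and you must arrange that the \emph{generic point of $D$}---a point of dimension $m$ in $\mathcal C$---lies outside the jumping locus. But Theorem~\ref{thm:3a}, applied along a curve $B_0$ via a dominant $B\to B_0$, yields after approximation only a $\overline{\F_q(B_0)}$-point of $\mathcal C$: a point whose closure in $\mathcal C$ has dimension $1$. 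For $m\ge 2$ this is not a multisection of $\mathcal C\to B$, and no ``approximation and algebraization'' will promote it to one; your argument as written works only when $m=1$.

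The paper's d\'evissage runs orthogonally to yours: it leaves $\dim S$ arbitrary and instead reduces the transcendence degree of $K$ to $1$ (Proposition~\ref{prop:allfields}), writing $K=\overline{k(t)}$ for an algebraically closed $k$. One then spreads $X\to S$ (of any dimension) out to $\X\to\B$ over a curve $C$ over $k$, and Theorem~\ref{thm:3a} applied directly to this family produces, after approximation, a $\overline{k(C)}$-point of $\B$; since $\overline{k(C)}\cong K$, this is already the desired point of $S(K)$. Your induction on $\dim S$ is then unnecessary. The reduction to $\operatorname{trdeg}(K)=1$---itself a spreading-out and specialization argument over $L=\overline{\F_p(t)}$, similar in spirit to your induction but with the roles of base and coefficient field interchanged---is precisely the step your proposal is missing.
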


   Andr\'{e} \cite{Andre}*{Th\'{e}or\`{e}me 5.2}, and via different method B. Poonen and D. Maulik \cite{MauPoo}*{Theorem 1.1}, proved the analogous result with $K$ above replaced with any algebraically closed field of characteristic $0$. E. Ambrosi \cite{Ambrosi}*{Corollary 1.6.1.3}, inspired by the proof of Y. Andr\'{e}, has independently proved Theorem \ref{thm:1}. Our proof is different and obtains certain intermediary results we believe to be of independent interest. In particular, we prove in Theorem \ref{thm:2} that the exponent of the $p$-torsion in the cokernel of certain specializations of N\'{e}ron-Severi groups in a family of varieties can be bounded only in terms of the family, independent of the particular points chosen. We use this to prove in Theorem \ref{thm:3a} and Theorem \ref{thm:3b} the nowhere density of the jumping locus for certain families of $k((t))$-varieties, and from this deduce finally Theorem \ref{thm:1} in Section \ref{sec:spread}.

\begin{setup}\label{set:1}
  Let $k$ be an algebraically closed field of characteristic $p>0$ and $C$ a smooth, connected $k$-curve. For any $c\in C(k)$, let $\O_{C,c}$ denote the local ring of the structure sheaf $\O_C$ at $c$, $\widehat{\O_{C,c}}$ its completion, and $\O_c=\ov{\widehat{\O_{C,c}}}$ the integral closure of $\widehat{\O_{C,c}}$ in an algebraic closure of $\Frac(\widehat{\O_{C,c}})$. 
\end{setup}
Recall that if $\O$ is a topological ring and $X$ a finite type $\O$ scheme, then there is a natural topology on the set $X(\O)$. See Section \ref{sec:piadic} for more details.

If $X\to S$ is a map of schemes and $R$ is a valuation ring, for any map $x:\Spec R \to S$, we will denote by $X_x$ the pullback of $X$ to $\Spec R$ along $x$, and by $X_{\eta_x}$ the pullback of $X$ to the generic point of $\Spec R$ along $x$. We will denote $X_{\ov{\eta_x}}$ as the pullback of $X$ to a geometric generic point of $\Spec R$ along $x$. Similarly $X_{s_x}$ will denote the pullback to the special point and $X_{\ov{s_x}}$ the pullback to a geometric special point of $\Spec R$ along $x$.
\begin{thm}\label{thm:3a}
  With assumptions and notation of Setup \ref{set:1}, let $\X\to \B$ be a smooth, proper map of smooth, finite type $C$-schemes. Let $\ov\eta$ be a geometric generic point of $C$. Then there exists a dense Zariski open $U\subseteq \B$, such that for all $c\in C(k)$, $U(\O_c)_{\textup{jumping}}\coloneqq\{x\in U(\O_c):\rho(\X_{\ov{\eta_x}})> \rho(\X_{\ov\eta})\}\subseteq U(\O_c)$ is nowhere dense, where $\O_c$ is regarded as a topological ring and $U(\O_c)$ is given the natural topology.
\end{thm}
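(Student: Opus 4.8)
The plan is to reduce the statement to an identity principle for a ``crystalline obstruction map'' attached to the family, which is forced to vary $t$-adic analytically along arcs.

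First I would make the standard reductions. Shrinking $\B$ to a dense Zariski open $U$, I may assume that Theorem~\ref{thm:2}, applied to $\X\to\B$ over $k$, furnishes an integer $N$ bounding the exponent of the $p$-torsion of $\coker(\spe_y)$ for \emph{every} arc $y\colon\Spec k[[s]]\to U$; that the relative second crystalline cohomology of $\X\to\B$ restricts over $U$ to a well-behaved $F$-isocrystal $\E$ carrying its Hodge sub-bundle $F^1\subseteq\E$ and its Gauss--Manin/crystal structure; and that the generic N\'eron--Severi classes span a sub-object $\N\subseteq F^1$ of rank $\rho(\X_{\ov\eta})=:\rho_{\mathrm{gen}}$. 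Since $k$ is algebraically closed, each finite subextension $R$ of $\O_c$ over $\widehat{\O_{C,c}}$ is again of the form $k[[s]]$ (Cohen), and $\O_c=\colim_R R$, so Theorem~\ref{thm:2} applies, after passing to a finite level, to every arc in $U(\O_c)$.

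Next I would translate ``jumping'' into linear algebra. Fix $x\in U(\O_c)$ and write $Y=\X_{s_x}$ for the special fibre, constant on each $t$-adic ball. Injectivity of specialization for N\'eron--Severi groups and compatibility of specialization maps in towers give $\rho(\X_{\ov{\eta_x}})=\rk\Ima(\spe_x\colon\NS(\X_{\ov{\eta_x}})\to\NS(Y))$, and $\Ima(\spe_x)\otimes\Q$ contains the specialization of the generic N\'eron--Severi group, of rank $\rho_{\mathrm{gen}}$; so $x$ is a jumping point exactly when $\rk\Ima(\spe_x)>\rho_{\mathrm{gen}}$. By the crystalline Lefschetz $(1,1)$ theorem of Morrow together with the \emph{uniform} bound of Theorem~\ref{thm:2}, a class $\mu\in\NS(Y)$ lies in $\Ima(\spe_x)\otimes\Q$ if and only if the crystalline Chern class $c_1^{\crys}(\mu)\in\E_{s_x}$, transported from the special to the generic point of $\Spec\O_c$ by the crystal structure of $\E$, remains inside $F^1$; the exponent $N$ is what makes this a $\Z_p$-linear condition with denominators bounded independently of $x$. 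Hence $\rho(\X_{\ov{\eta_x}})=\rk\ker(\mathrm{ob}_x)$, where $\mathrm{ob}_x$ is the linear ``obstruction map'' sending $c\in\E_{s_x}$ to the image, in $(\E/F^1)$ along the arc, of its transport to the generic point; and $x$ jumps iff $\rk(\mathrm{ob}_x)$ falls below the value $r$ it takes at a generic arc.

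I would then conclude by an identity-principle argument. In suitable frames and after clearing the denominator $p^N$, the matrix of $\mathrm{ob}_\bullet$ has entries given by convergent power series in the coordinates of the arc: they are built from the Gauss--Manin/crystal structure of $\E$, which is algebraic over $U$, composed with transport along the arc, which converges $t$-adically (one argues at each finite level $R\cong k[[s]]$ and takes the colimit). Thus $\mathrm{ob}_\bullet$ is $t$-adic continuous in the arc, so the jumping locus $\{x:\rk(\mathrm{ob}_x)<r\}$ --- the common zero locus of the $r\times r$ minors --- is $t$-adically closed. It has empty interior because non-jumping arcs are $t$-adically dense: there is an embedding $k(U)\hookrightarrow\ov{k((t))}$ carrying $\O(U)$ into $\O_c$, producing an arc $x$ with $x(\eta)=\eta_U$, for which $\X_{\ov{\eta_x}}\cong(\X_{\eta_U})_{\ov{k((t))}}$ has Picard number $\rho_{\mathrm{gen}}$ since N\'eron--Severi groups are insensitive to extensions of algebraically closed fields; and any prescribed finite jet of an arc can be completed to such a generic arc. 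A $t$-adically closed set with empty interior is nowhere dense, so $U(\O_c)_{\textup{jumping}}$ is nowhere dense.

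The hard part is making the second step precise: interpreting crystalline realizations and Morrow's theorem over the non-Noetherian valuation ring $\O_c$ and the imperfect fields $k((s))$ --- handled by descending to the finite levels $\X_R\to\Spec R$ and to relative crystalline cohomology there --- and, above all, extracting from Morrow's theorem and Theorem~\ref{thm:2} a single obstruction map with a denominator bound and $t$-adic continuity that are uniform across $U(\O_c)$. That uniformity, unavailable from Morrow's theorem alone, is exactly what Theorem~\ref{thm:2} provides and is why it is the key ingredient; the remaining points --- comparing the various specialization maps, and the identity principle over the incomplete field $\ov{k((t))}$, which reduces to its analogue over $k[[s]]$ --- are routine.
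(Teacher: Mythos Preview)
Your approach is genuinely different from the paper's. The paper does not construct an analytic obstruction map whose minors cut out the jumping locus. Instead, it first reduces the proper case to the projective case via alterations (Section~\ref{sec:projprop}), and then proves the projective case (Theorem~\ref{thm:3b}) by a discrete, inductive argument on components of the Picard scheme. Inside a residue disk it fixes a $\Z$-basis $\ell_1,\ldots,\ell_n$ of a saturated sublattice of $\NS(X_0)$ complementary to the generic classes, and shows by induction on $m$ that there is a nonempty $t$-adic open on which no nonzero element of $\sum_{i\le m}\Z_p\ell_{m,i}$ lifts adically. The induction step looks at the scheme-theoretic images of the components $\Pic^u_{X'_n/B'_n}$ over finite thickenings $B'_n$, uses compactness of $\Z_p^{m+1}\setminus p\Z_p^{m+1}$ and Proposition~\ref{prop:liftingformal} to find a level $n$ at which all these images are proper closed subschemes, and then chooses a jet avoiding them via a Weil restriction. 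Assumptions~\ref{ass:1} and~\ref{ass:2} (i.e., Theorem~\ref{thm:2prime}) are used to pass between adic lifts, actual lifts, and $\Z_p$-lifts, but no period map or identity principle appears.

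Your proposal has a real gap in the second step. Morrow's Lefschetz $(1,1)$ theorem (Theorem~\ref{thm:vartate}) does not say that a class lifts iff its crystalline Chern class, transported by a crystal connection, remains in a Hodge sub-bundle $F^1$; the criterion is that $c_1(\L)\in H^1_\cont(X_{0,\et},W\Omega^1_{X_0,\log})$ lies in the image of $H^1_\cont(X_\et,W\Omega^1_{X,\log})$, which via Theorem~\ref{thm:logcris} is a statement about the $\phi=p$ part of crystalline cohomology, not about the Hodge filtration. Even if one repackages the infinitesimal obstruction in $H^2(X_0,\O_{X_0})$ as ``image in $\E/F^1$'', the heart of your argument---that the matrix entries of $\mathrm{ob}_\bullet$ are $t$-adically convergent power series in the coordinates of the arc---is precisely where the characteristic-$p$ analogue of the Maulik--Poonen method breaks down: a Gauss--Manin connection over $k[[s]]$ in characteristic $p$ generically has nonzero $p$-curvature, so there is no parallel transport from the special to the generic point, and the crystal structure lives over $W(k)[[s]]$ with its PD-structure, which is a $p$-adic rather than a $t$-adic object. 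Without that analyticity you cannot invoke an identity principle, and your density claim that ``any prescribed finite jet can be completed to a generic arc'' is likewise unjustified; the paper sidesteps both issues by arguing by contradiction and never needing a single non-jumping arc to begin with.
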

An similar theorem in characteristic $0$ was proved by D. Maulik and B. Poonen \cite{MauPoo}.

Let $k$ be an algebraically closed field of characteristic $p>0$. Let $\O$ be the integral closure of $k[[t]]$ in an algebraic closure of $k((t))$. Theorem \ref{thm:3b} will give a criterion for when, given $\X\to \B$ a smooth morphism of $k[[t]]$-schemes, it holds that $\B(\O)_{\text{jumping}}\subseteq \B(\O)$ is nowhere dense. Theorem \ref{thm:3a} in the projective case will be proved immediately after the statement of Theorem \ref{thm:3b}. The deduction of the proper case from the projective case occurs in Section \ref{sec:projprop}.

The following theorem is a key input into the proof of Theorem \ref{thm:3a}.
\begin{thm}\label{thm:2}
  Let $k$ be an algebraically closed field of characteristic $p>0$. Let $X\to S$ be a smooth, proper morphism of finite type $k$-schemes. Then there exists a dense Zariski open $U\subseteq S$ and an integer $N\geq 0$ such that for each map $x:\Spec k[[t]]\to U$, each line bundle $\L$ on $X_{s_x}$, and each $n\geq N$, if $p^n\L$ lifts to $X_x$, then $p^N\L$ lifts to $X_x$.
\end{thm}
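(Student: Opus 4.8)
The plan is to combine a spreading-out argument with Morrow's crystalline Lefschetz $(1,1)$ theorem, the latter substituting for the exponential sequence used in the characteristic $0$ situation --- the naive replacement, linearizing $1+t\O_{X_x}$ by the logarithm, being unavailable here. First I would spread out: using generic flatness, the base-change and finiteness theorems for relative de Rham and crystalline cohomology, and the structure of the relative Picard functor, shrink $S$ to a dense affine open $U$ so that the structure morphism $\pi\colon X\to U$ is smooth proper with geometrically connected fibers; $\mathrm{Pic}^0_{X/U}$ is proper over $U$; the crystal $\E=R^2\pi_{\crys*}\O_{X/W}$ on $(U/W)_{\crys}$, with $W=W(k)$, is a vector bundle whose formation commutes with base change, carrying its Gauss--Manin connection and Hodge step $\mathrm{Fil}^1\E$; and $U$ lifts to a smooth affine $W$-scheme $\widetilde U$ on which $\E$ is realized by a vector bundle $\widetilde\E$ with integrable connection $\widetilde\nabla$ over $W$ and a direct summand $\mathrm{Fil}^1\widetilde\E$. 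For $x\colon\Spec k[[t]]\to U$, pulling these back along the induced $\widetilde x\colon\Spec W[[t]]\to\widetilde U$ (noting $W[[t]]$ is local and $p$-adically complete with the standard divided powers on $(p)$) produces a finite free $W[[t]]$-module $\E_x$ with integrable connection $\nabla_x$ and Hodge step $\mathrm{Fil}^1\E_x$, and, after further shrinking so that base change holds, an identification $\E_x/t\E_x\cong H^2_{\crys}(X_{s_x}/W)$.

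By Morrow's crystalline Lefschetz $(1,1)$ theorem applied to $X_x\to\Spec k[[t]]$ --- together with Grothendieck existence to pass between $X_x$ and its formal completion, and the properness of $\mathrm{Pic}^0_{X_x/k[[t]]}$ over the complete discrete valuation ring $k[[t]]$ to handle classes algebraically equivalent to zero --- a line bundle $\L$ on $X_{s_x}$ lifts to $X_x$ if and only if its crystalline first Chern class $c_1^{\crys}(\L)\in H^2_{\crys}(X_{s_x}/W)$ lies, up to a bounded torsion ambiguity, in the image $M_x$ of $\Theta_x:=\ker(\nabla_x)\cap\mathrm{Fil}^1\E_x$ under reduction modulo $t$. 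The submodules $\ker(\nabla_x)$ and $\mathrm{Fil}^1\E_x$ are each $p$-saturated in the torsion-free module $\E_x$ --- if $p^m v\in\ker\nabla_x$ then $p^m\nabla_x v=0$ forces $\nabla_x v=0$, and $\mathrm{Fil}^1\E_x$ is a direct summand --- hence so is $\Theta_x$. Let $\widetilde M_x$ be the $p$-saturation of $M_x$ in $H^2_{\crys}(X_{s_x}/W)$ modulo torsion, and let $p^{e_x}$ annihilate $\widetilde M_x/M_x$. Since $c_1^{\crys}$ is a homomorphism, if $p^n\L$ lifts to $X_x$ for some $n$ then $p^nc_1^{\crys}(\L)\in M_x\subseteq\widetilde M_x$, so $c_1^{\crys}(\L)\in\widetilde M_x$, so $p^{e_x}c_1^{\crys}(\L)\in M_x$, and hence $p^{e_x+c}\L$ lifts for a fixed constant $c$ absorbing the torsion ambiguity and the $p$-exponent of the torsion of $\NS(X_{s_x})$ (bounded over $U$ by constructibility). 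It therefore suffices to bound $e_x$ uniformly for $x\in U(k[[t]])$, after which $N:=c+\sup_x e_x$ works, since if $p^N\M$ lifts to $X_x$ then so does $p^m\M$ for every $m\ge N$ by taking tensor powers.

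It remains to produce the uniform bound on $e_x$, which I expect to be the main obstacle. The datum $(\E_x,\nabla_x,\mathrm{Fil}^1\E_x,\,\E_x\twoheadrightarrow\E_x/t\E_x)$ is the pullback along $\widetilde x$ of the single datum $(\widetilde\E,\widetilde\nabla,\mathrm{Fil}^1\widetilde\E)$ on $\widetilde U$, restricted to the $W$-section through $s_x$; one would like to conclude that $e_x$ is computed by Fitting ideals of cokernels in a Noetherian family over $U$ and is thus bounded on a dense open by semicontinuity. The difficulty is that $\ker(\nabla_x)$ need not be the pullback of a coherent subsheaf of $\widetilde\E$: its rank jumps with the $p$-curvature of the Gauss--Manin connection, so $\Theta_x$ is not obviously constructible in $x$. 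The remedy I anticipate is to replace $\ker(\nabla_x)\cap\mathrm{Fil}^1\E_x$ by the submodule of $\mathrm{Fil}^1\E_x$ cut out by only finitely many of the horizontality conditions --- still containing $c_1^{\crys}$ of every algebraic class, since such classes are genuinely flat, so that the resulting $M_x'$ still detects liftability from above and has $p$-saturation comparable to that of $M_x$ up to a bounded factor --- and this truncated submodule is manifestly the pullback of a coherent sheaf in $x$, so that ordinary generic flatness and semicontinuity of lengths of torsion cokernels over $U$ finish the argument.
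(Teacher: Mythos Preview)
Your plan has the right architecture --- reduce via a Lefschetz $(1,1)$-type statement to controlling the $p$-saturation defect of a submodule of $H^2_\crys(X_{s_x}/W)$ uniformly in $x$ --- but there are two real gaps.

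First, the characterization you attribute to Morrow is not his theorem. Morrow (Theorem~\ref{thm:vartate} here) proves that $\L$ lifts to $X_x$ if and only if its class in $H^1_\cont(X_{s_x},W\Omega^1_{X_{s_x},\log})$ lies in the image from $H^1_\cont(X_x,W\Omega^1_{X_x,\log})$; the paper then shows (Theorem~\ref{thm:logcris}, Corollary~\ref{cor:fs}) that this is governed, up to bounded torsion, by the Frobenius eigenspace $\ker(\nabla_x)^{\phi=p}$, not by $\ker(\nabla_x)\cap\mathrm{Fil}^1$. The easy direction of your criterion holds --- a lifted Chern class is horizontal and lies in $\mathrm{Fil}^1$ --- but the converse, that membership in the image of $\Theta_x$ forces liftability, is a Hodge-flavored variational statement that is not what Morrow proves and would itself require justification in characteristic $p$.

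Second, and this is where the actual content of the paper lies, your uniform bound on $e_x$ does not go through. The proposed remedy of imposing ``only finitely many of the horizontality conditions'' enlarges $M_x$ to some $M_x'\supseteq M_x$; knowing $p^{e_x'}c_1(\L)\in M_x'$ no longer lets you conclude that $p^{e_x'}\L$ lifts, since the ``if'' direction was only available for $M_x$, not for the larger $M_x'$. The paper's mechanism is genuinely different and relies on Frobenius: one shrinks $U$ so that the Newton polygon of the $F$-crystal $R^2f_{\crys*}\O_X$ is constant, and then applies Katz's theorem (Lemma~\ref{lem:katzlemma}) that an $F$-crystal on $\Spec k[[t]]$ with a fixed Newton polygon is isogenous --- with degree bounded solely by that polygon --- to one which becomes constant over the perfection. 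This is what bounds the torsion in $\coker\bigl(\ker(\nabla_x)\to\E_x/t\E_x\bigr)$ uniformly in $x$. An argument using only $(\nabla,\mathrm{Fil})$ and never $\phi$ should make you suspicious: it would transport verbatim to mixed characteristic, where the analogous statement is false (Section~\ref{sec:mixcharcounter}).
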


The proof of Theorem \ref{thm:2} makes essential use of a crystalline Lefschetz $(1,1)$ Theorem proved by M. Morrow in \cite{Morrow:vartate}. It will be proved in Section \ref{sec:proof2} in the projective case, and the general case will be deduced in Section \ref{sec:projprop}.

\begin{cor}\label{cor:2}
  Let $k$ be an algebraically closed field characteristic $p>0$. Let $C$ be a curve over $k$. Let $\X\to \mathscr{S}$ a proper, smooth morphism of finite type $C$-schemes. Then there a dense open $U\subseteq \mathscr{S}$ and integer $N$ such that for all $c\in C(k)$ the following holds: for any finite flat $\widehat{\O_{C,c}}$-algebra $R$ which is an integral domain, any $x\in U_{\widehat{\O_{C,c}}}(R)$, any element $y\in \NS(\X_{\ov{s_{x}}})$, and any $n\geq N$, if $p^{n}y$ is in the image of specialization from  $\NS(\X_{\ov{\eta_x}})$, then $p^Ny$ is in the image of the specialization.
\end{cor}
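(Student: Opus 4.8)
The plan is to deduce Corollary~\ref{cor:2} from Theorem~\ref{thm:2} by base change and spreading out, after translating the assertion that a class ``lies in the image of the specialization map on N\'eron--Severi groups'' into the ``lifting of line bundles'' appearing in Theorem~\ref{thm:2}. Since $C$ is of finite type over $k$, so is $\mathscr S$, and $\X\to\mathscr S$ is a smooth proper morphism of finite type $k$-schemes; let $U\subseteq\mathscr S$ and $N\geq 0$ be those produced by Theorem~\ref{thm:2} applied to $\X\to\mathscr S$ -- these will be the $U$ and $N$ of the corollary. Because the conclusion of Theorem~\ref{thm:2} holds for \emph{every} morphism $\Spec k[[t]]\to U$, the required uniformity in $c$ is automatic. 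So fix $c\in C(k)$, a finite flat $\widehat{\O_{C,c}}$-algebra $R$ that is an integral domain, a point $x\in U_{\widehat{\O_{C,c}}}(R)$, a class $y\in\NS(\X_{\ov{s_x}})$, and an integer $n\geq N$ for which $p^n y$ lies in the image of the specialization map.

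\emph{Step 1 (reduction to a complete discrete valuation ring).} Since $\widehat{\O_{C,c}}\cong k[[t]]$ and $R$ is a finite flat domain over it, $R$ is a one-dimensional complete Noetherian local domain with residue field $k$, but it need not be regular. Let $\wt R$ be the normalization of $R$; as $R$ is excellent, $\wt R$ is module-finite over $R$, hence is a complete DVR, still finite flat over $\widehat{\O_{C,c}}$ and an integral domain, with residue field $k$ (as $k=\ov k$), so $\wt R\cong k[[s]]$. Let $\wt x\in U_{\widehat{\O_{C,c}}}(\wt R)$ be the composite of $x$ with $\Spec\wt R\to\Spec R$. Normalization does not change the fraction field, and $R\to\wt R\to k$ is the residue map, so $\X_{\ov{\eta_{\wt x}}}=\X_{\ov{\eta_x}}$ and $\X_{\ov{s_{\wt x}}}=\X_{\ov{s_x}}=:X_0$. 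Since the specialization map is obtained by spreading a line bundle on the geometric generic fibre out over a finite extension of the base and restricting it to the special fibre -- an operation insensitive to the passage from $R$ to $\wt R$ -- the specialization maps attached to $x$ and to $\wt x$ coincide. We may therefore replace $(R,x)$ by $(\wt R,\wt x)$ and assume from now on that $R\cong k[[s]]$.

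\emph{Step 2 (from the image of specialization to lifts of line bundles).} Now $X_0=\X_{s_x}$ is smooth and proper over $k=\ov k$, so $\NS(X_0)=\NS(\X_{\ov{s_x}})$ and we may write $y=[\L]$ with $\L\in\Pic(X_0)$. Choose $\bar z\in\NS(\X_{\ov{\eta_x}})$ with $\spe(\bar z)=p^n y$ and represent $\bar z$ by a line bundle on $\X_{\ov{\eta_x}}$; it descends to $\X_{K'}$ for some finite extension $K'/\Frac R$. Let $R'$ be the integral closure of $R$ in $K'$; again $R'$ is a complete DVR with residue field $k$, so $R'\cong k[[s']]$, and $\X_{R'}:=\X_x\times_R R'$ is regular with special fibre $X_0$ an effective Cartier divisor. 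Hence the line bundle on $\X_{K'}$ extends (take the closure of a divisor representing it) to some $\mathcal{N}\in\Pic(\X_{R'})$, and by the construction of $\spe$ we have $[\mathcal{N}|_{X_0}]=\spe(\bar z)=p^n[\L]$ in $\NS(X_0)$.

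\emph{Step 3 (absorbing algebraic equivalence and invoking Theorem~\ref{thm:2}).} The line bundle $\delta:=\mathcal{N}|_{X_0}\otimes(\L^{\otimes p^n})^{\vee}$ is then algebraically equivalent to $\O_{X_0}$, so $\delta\in\Pic^0(X_0)$. As $X_0$ is smooth and proper over $\ov k$, $\Pic^0(X_0)$ is the group of $k$-points of an abelian variety (the reduced Picard variety of $X_0$), hence is divisible, so $\delta=\gamma^{\otimes p^n}$ for some $\gamma\in\Pic^0(X_0)$. Put $\L':=\L\otimes\gamma$; then $[\L']=y$ in $\NS(X_0)$ and $(\L')^{\otimes p^n}\cong\mathcal{N}|_{X_0}$, so $p^n\L'$ lifts to $\X_{R'}$. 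Applying Theorem~\ref{thm:2} to the morphism $x':=x\circ(\Spec R'\to\Spec R)\colon\Spec R'\to U$ (a legitimate input, since $R'\cong k[[s']]$), to the line bundle $\L'$ on $X_0$, and to $n\geq N$, we conclude that $p^N\L'$ lifts to a line bundle $\M$ on $\X_{R'}$. The class of $\M|_{\X_{K'}}$, pulled back to $\X_{\ov{\eta_x}}$, is a $\bar w\in\NS(\X_{\ov{\eta_x}})$ with $\spe(\bar w)=[\M|_{X_0}]=p^N[\L']=p^N y$, so $p^N y$ lies in the image of the specialization map, as desired. The substantive input is Theorem~\ref{thm:2}; the delicate points are the insensitivity of the specialization map to replacing the base by its normalization or by a finite extension (which legitimizes the reductions above) and the use of divisibility of $\Pic^0(X_0)$ to pass from ``$p^n[\L]$ lies in the image of $\spe$'' to ``a line bundle representing $y$ has its $p^n$-th tensor power lifting to a model over a finite extension,'' which is what lets one invoke Theorem~\ref{thm:2}.
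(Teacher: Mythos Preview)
Your argument is correct and follows the same route as the paper: apply Theorem~\ref{thm:2} after identifying the base ring with $k[[t]]$. The paper's proof is a single sentence asserting exactly this. You are, however, more careful on two points the paper glosses over. First, the paper claims that any $R$ as in the statement is abstractly isomorphic to $k[[t]]$; this is false as stated (e.g.\ $k[[x,y]]/(y^2-x^3)$ is a finite free $k[[x]]$-algebra and a domain, but not regular), and your normalization in Step~1 is the correct repair---the passage to $\wt R$ changes neither the geometric generic nor the special fibre, so the specialization map is unaffected. Second, Theorem~\ref{thm:2} speaks of lifting actual line bundles, whereas the corollary speaks of the image of $\spe$ on N\'eron--Severi classes; your Step~3 bridges this via the divisibility of $\Pic^0(X_0)(k)$, which the paper leaves implicit. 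So your proof is essentially the intended one, just with the details filled in.
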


\begin{proof}
  We take $U$ so that Theorem \ref{thm:2} holds for $\X\to \mathscr{S}$. Then the result follows immediately from Theorem \ref{thm:2} once we note that any $R$ as in the statement of the corollary is abstractly isomorphic to $k[[t]]$ as a $k$ algebra.
\end{proof}

\subsection{Mixed characteristic counterexample}\label{sec:mixcharcounter}

Corollary \ref{cor:2} is surprising, because its mixed characteristic analogue is false. Below we demonstrate a counterexample to the following statement:

\begin{statement}
  Let $K$ be a number field, let $\O_K$ be its ring of integers, and let $C=\Spec \O_K$. Let $\X\to \mathscr{S}$ a proper, smooth morphism of finite type $C$-schemes. Then there a dense open $U\subseteq \mathscr{S}$ such that for any closed point $\mathfrak{p}$ of $C$: there is an integer $N\geq 0$ such that for any finite flat $\O_{K,\mathfrak{p}}$-algebra $R$ which is an integral domain, any $x\in U(R)$, any element $y\in \NS(\X_{\ov{s_{x}}})$, and any $n\geq N$, if $p^{n}y$ is in the image of specialization from  $\NS(\X_{\ov{\eta_x}})$, then $p^Ny$ is in the image of the specialization.
\end{statement}

As a counterexample, let $\O_K=\Z[\zeta_3]$, let $\mathscr{S}=X(3)[1/6]$ the moduli space of elliptic curves with full level $3$ structure (which we require only to ensure that the moduli space is a scheme) over $\Z[1/6]$, and let $\mathscr{E}$ be the universal family of elliptic curves over $\mathscr{S}$, and let $\X=\mathscr{E}\times_{\mathscr{S}}\mathscr{E}$.

If $E$ is an elliptic curve, $\NS(E\times E)\cong \Z^2\oplus \End(E)$ canonically.

Now, for each $n$ let $L_n$ be an imaginary quadratic field and $\O_{L_n}$ be its ring of integers. Let $p$ be any prime greater than $3$ and let $E_n$ be an elliptic curve over some number field $K_n$ containing $\Q(\zeta_3)$, with full $3$-torsion over $K_n$, all of whose endomorphisms are defined over $K_n$ and such that $\End(E)\cong \Z+p^n\O_{L_n}$. The theory of complex multiplication guarantees that $E_n$ has potentially good reduction, thus perhaps enlarging $K_n$, we take $E_n$ has good reduction everywhere. We may then extend $E_n$ to an elliptic scheme $\E_n$ over the ring of integers $\O_{K_n}$.

Let $p$ be a prime of $\O_K$ lying over $p$, and let $\ov{\E_n}=\E_n\otimes \O_{K_n}/p$. Consider the reduction of $\E_n$ mod a chosen prime $\p_n$ over $\O_{K_n}$ over $p$, and denote this reduction $\ov{\E_n}$. There is a specialization map $\End(E_n)\to \End(\ov{\E_n})$. For any elliptic curve over a field of characteristic $p$, the index of its endomorphism ring in a maximal order is coprime to $p$, so the cokernel of $\End(E_n)\to \End(\ov{\E_n})$ contains a copy of $\Z/p^n\Z$ as $\End(E_n)$ has index $p^n$ in the maximal order $\O_{L_n}$.  Let $R_n$ be the completion of $\O_{K_n}$ at $\p_n$. We may find a map $\Spec R_n\to \mathscr{S}$ such that the pullback of $\X$ is isomorphic to the pullback of $\E_n\times \E_n$ along $\O_K\to R_n$. We will denote this pullback again by $\E_n\times \E_n$ and its generic fiber $E_n\times E_n$.

If $Y$ is a variety over a nonalgebraically closed field $k$, $\NS(Y)$ shall mean $\NS(Y\otimes_k L)$ for some choice of algebraically closed field $L$ containing $k$. Now, we have $\NS(E_n\times E_n)\cong \Z^2\oplus \End(E_n)$ and $\NS(\ov{\E_n}\times \ov{\E_n})\cong \Z^2\oplus \End(\ov{\E_n})$ and the specialization map $\NS(E_n\times E_n)\to \NS(\ov{\E_n}\times \ov{\E_n})$ have cokernel which contains a copy of $\Z/p^n\Z$ coming from the compatible specialization map $\End(E_n)\to \End(\ov{\E_n})$. 

Since there are infinitely many points of $\mathscr{S}$ corresponding to these $E_n$ for every prime $\mathfrak{p}$ of $\O_{K}[1/6]$, any dense open $U$ as in the statement will contain all but finitely many of those points for a prime $p$. Therefore, for any $U\subseteq \mathscr{S}$ dense open, there will be a prime $\p$ of $\O_K$ such that for any integer $M$ there is a finite flat $\O_{K,\mathfrak{p}}$ algebra $R$ which is an integral domain and a point $y:\Spec R\to \mathscr{S}$, such that the $p$-xtorsion in the cokernel of $\NS(\X_{\ov{\eta}_x})\to \NS(\X_{\ov{s}_x})$ is not killed by $p^M$: indeed we may take $R=R_n$ and $y$ to be the point $\Spec R_n\to \mathscr{S}$ corresponding to $\E_n\times_{\O_{K,\mathfrak{p}}}\E_n$ as above. This establishes the falseness of the mixed characteristic version of Corollary \ref{cor:2}.

In the mixed characteristic statement, if we bound the ramification of $R$ a version of Corollary \ref{cor:2} does hold.

\section{Notation and conventions}\label{sec:background}
If $X\to Y$ is a map of schemes, and $x:S\to Y$ a map of schemes, the pullback $X\times_Y S$ will denoted by $X_x$. When $S=\Spec k$ for a field $k$, $X_{\ov x}$, will denote the pullback $X$ to some algebraic closure of $k$ along $x:S\to Y$. We will often suppress the chosen algebraic closure. If $s\in S$ is a point, we will denote $X_s$ the pull back of $X$ to the residue field at $s$ and $X_{\ov{s}}$ the pullback to an algebraic closure of that residue field.

For any $R$ is a local ring which is an integral domain with fraction field $F$ and residue field $k$, if $x:\Spec R\to Y$ is a map, then denote by $X_{s_x}$ the pull back of $X$ to $\Spec k$ along $x$, denote by $X_{\ov{s}_x}$ the pull back of $X$ to an integral closure $k$ along $x$, denote by $X_{\eta_x}$ the pullback of $X$ to $K$ along $x$, and denote by $X_{\ov{\eta}_x}$ the pullback of $X$ to an algebraic closure of $K$ along $x$.

\section{Background}\label{sec:NSgroups}

\subsection{N\'{e}ron-Severi groups}
We begin by recalling some results and definitions about N\'{e}ron-Severi groups.

Let $k$ be an algebraically closed field and $X$ a finite type $k$-scheme. Then $\Pic(X)$ denotes the group of line bundles with multiplication given by tensor product. In this paper $\L_1\otimes \L_2$ will also be denoted as $\L_1+\L_2$. Two line bundles $\L_1$ and $\L_2$ on $X$ are algebraically equivalent if there exists a connected finite type $k$-scheme $S$, a line bundle $\L$ on $X\times S$, and two points $s_1,s_2\in S(k)$, such that identifying the fibres over $s_1$ and $s_2$ with $X$, the restriction of $\L$ over $s_1$ is isomorphic to $\L_1$ and the restriction over $s_2$ is isomorphic to $\L_2$. This is clearly an equivalence relation on $\Pic X$. If $\L_1$ and $\L'_1$ are algebraically equivalent and $\L_2$ and $\L_2'$ are algebraically equivalent, then  $\L_1+\L_2$ and $\L_1'+\L_2'$ are algebraically equivalent. Thus, $\Pic(X)$ modulo algebraic equivalence forms a group, which is called the N\'{e}ron-Severi group of $X$, denoted by $\NS(X)$. This is a finitely generated abelian group (see \cite{Neron:NS}*{p. 146 Th\'{e}or\`{e}me 2} or \cite{SGA6}*{XIII.5.1})). We will let $\rho(X)$ denote the rank of $\NS(X)$.

\begin{rmk}\label{rmk:des}
  Now let $R$ be a discrete valuation ring, and $\X$ a smooth, finite type $R$-scheme. Let $X$ denote the generic fiber. Let $\ov\eta$ be a geometric generic point of $\Spec R$, $s$ the special point and $\ov s$ be a geometric special point. Then there is a map $\NS(X_{\ov\eta})\to \NS(X_{\ov s})$. For $\ell\in \NS(X_{\ov\eta})$ such that there is a line bundle $\L$ on $X$ giving $\ell\in \NS(X_{\ov\eta})$, this map may be given explicitly. Let $D$ be a divisor on $X$ such that $\L\cong \O(D)$. Write $D=\sum_i n_i D_i$ for integral divisors $D_i$, and let $\mathscr{D}_i$ be the closure of $D_i$ in $\X$ and $\mathscr{D}=\sum_i n_i \mathscr{D}$. Then the image of $\ell$ in $\NS(X_{\ov s})$ is represented by the restriction of the line bundle $\O(\mathscr{D})$ to $X_{s}$.  
\end{rmk}

\begin{prop}\cite{MauPoo}*{Proposition 3.6}
  Let $B$ be a noetherian scheme with $s,t\in B$ such that $s$ is a specialization of $t$. Let $p=\chara \kappa(s)$. Let $\X\to B$ be a smooth and proper morphism. Then it is possible to choose a homomorphism $\spe_{\ov t,\ov s}:\NS (\X_{\ov t})\to \NS (\X_{\ov s})$
  satisfying
  \begin{enumerate}[(a)]
  \item If $R$ is a discrete valuation ring and $B=\Spec R$ and $s$ and $t$ are the generic and special points of $B$, and $\ell\in \NS(X_{\ov{t}})$ can be represented by $\L$ on $X_t$, then $\spe_{\ov t,\ov s}(\ell)$ as described in Remark \ref{rmk:des}.
  \item If $p=0$, the $\spe_{\ov t,\ov s}$ is injective and $\coker(\spe_{\ov t,\ov s})$ is torsion free.
  \item If $p>0$, $(b)$ holds after tensoring with $\Z[1/p]$.
  \end{enumerate}

  In all cases $\rho(\X_{\ov s})\geq \rho(\X_{\ov t})$.
\end{prop}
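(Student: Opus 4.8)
Because the assertion concerns only the specialization $t \leadsto s$, I would first reduce to the case $B = \Spec R$ with $R$ a complete discrete valuation ring with algebraically closed residue field $\kappa$, $t$ its generic point and $s$ its closed point. Since $B$ is Noetherian and $s \in \ov{\{t\}}$, there is a discrete valuation ring $R_0$ and a morphism $\Spec R_0 \to B$ sending the generic point to $t$ and the closed point to $s$ (take a $\mathrm{DVR}$ dominating $\O_{\ov{\{t\}},s}$), and one then base changes along the faithfully flat maps $R_0 \hookrightarrow \widehat{R_0} \hookrightarrow R$, the last chosen so that $R$ is complete with $\kappa$ algebraically closed; none of this changes the geometric fibers $\X_{\ov t}$, $\X_{\ov s}$ or the map to be constructed, and (b), (c) may be checked afterwards, while (a) refers only to the $\mathrm{DVR}$ case and will be evident from the construction. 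Decomposing $\X$ according to the (finite étale) Stein factorization of $\X \to \Spec R$, we may also assume the fibers of $\X \to \Spec R$ are geometrically connected. Then $\X$ is regular and $\X_s$ is the integral Cartier divisor cut out by a uniformizer, so restriction $\Pic(\X) \to \Pic(\X_\eta)$ is an isomorphism: a line bundle $\O(D)$ on $\X_\eta$ extends uniquely to $\O_\X(\ov D)$, $\ov D$ the Zariski closure of $D$. Composing the inverse with restriction to $\X_s$ and passing to the colimit over the finite extensions $R' \supseteq R$ (again complete $\mathrm{DVR}$s with residue field $\kappa$) produces $\spe_{\ov t,\ov s}\colon \Pic(\X_{\ov\eta}) \to \Pic(\X_{\ov s})$, and by construction this satisfies (a). A standard spreading-out argument (extend a connecting family to a model over $R'$, then restrict to special fibers) shows $\spe_{\ov t,\ov s}$ preserves algebraic triviality, so it descends to $\spe_{\ov t,\ov s}\colon \NS(\X_{\ov\eta}) \to \NS(\X_{\ov s})$.

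For (b) and (c) I would argue $\ell$-adically. Fix a prime $\ell$ invertible in $\kappa$. Smooth and proper base change over the Henselian ring $R$ gives a specialization isomorphism $H^2_{\et}(\X_{\ov\eta}, \Z_\ell(1)) \xrightarrow{\ \sim\ } H^2_{\et}(\X_{\ov s}, \Z_\ell(1))$, and by the compatibility of cycle classes with the restriction $\X_s \hookrightarrow \X$ and with the extension-of-divisors recipe above, this isomorphism carries $c_1(\ell)$ to $c_1(\spe_{\ov t,\ov s}(\ell))$ for every line bundle class $\ell$. The crucial input is the Kummer-sequence computation: for a smooth proper variety $Y$ over an algebraically closed field in which $\ell$ is invertible, $\Pic^0(Y)$ is the group of points of an abelian variety, hence $\ell$-divisible, so passing to the inverse limit in the Kummer sequences gives an exact sequence
\[
0 \longrightarrow \NS(Y)\otimes\Z_\ell \longrightarrow H^2_{\et}(Y, \Z_\ell(1)) \longrightarrow T_\ell\,\Br(Y) \longrightarrow 0 ,
\]
in which $T_\ell\,\Br(Y)$ is torsion-free. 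Applying this to $Y = \X_{\ov\eta}$ and $Y = \X_{\ov s}$, the groups $\NS(\X_{\ov\eta})\otimes\Z_\ell$ and $\NS(\X_{\ov s})\otimes\Z_\ell$ are saturated submodules of the common module $H^2_{\et}(\X_{\ov s},\Z_\ell(1))$, and the first is contained in the second because the specialization of an algebraic class is algebraic. A saturated submodule of a saturated submodule is saturated, so $\spe_{\ov t,\ov s}\otimes\Z_\ell$ is injective with torsion-free cokernel, for every $\ell$ invertible in $\kappa$.

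Since $\NS(\X_{\ov\eta})$ and $\NS(\X_{\ov s})$ are finitely generated, and a finitely generated abelian group which becomes torsion-free after $\otimes\,\Z_\ell$ for every $\ell \neq p$ has only $p$-power torsion, it follows that $\ker(\spe_{\ov t,\ov s})$ and the torsion subgroup of $\coker(\spe_{\ov t,\ov s})$ are finite $p$-groups --- trivial when $p = 0$. This yields (b) when $p = 0$ and (c) when $p > 0$; and $\spe_{\ov t,\ov s}\otimes\Q$ being injective gives $\rho(\X_{\ov s}) = \dim_\Q \NS(\X_{\ov s})_\Q \geq \dim_\Q \NS(\X_{\ov\eta})_\Q = \rho(\X_{\ov t})$.

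The step I expect to be most delicate is organizational rather than conceptual: making the reduction to a complete $\mathrm{DVR}$ with algebraically closed residue field legitimate --- ensuring that the geometric Néron--Severi groups and the specialization map are genuinely unaffected, including by purely inseparable residue extensions when $p>0$ --- and pinning down precisely that $\spe_{\ov t,\ov s}$ is compatible with the $\ell$-adic specialization isomorphism on algebraic classes. The single genuinely positive-characteristic obstruction is the absence of a suitable $R^2 f_* \Z_p$, which is exactly why (c) only holds after inverting $p$.
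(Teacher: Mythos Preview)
The paper does not give its own proof of this proposition: it is quoted verbatim from \cite{MauPoo}*{Proposition 3.6}, and the only comment the paper adds is the single sentence ``The proof of this proposition uses many facts from \cite{SGA6}.'' So there is nothing in the paper to compare against beyond that citation.

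Your argument is correct and is, in outline, the standard one (and essentially what Maulik--Poonen do): reduce to a complete DVR with algebraically closed residue field, build $\spe$ by closure of divisors and restriction, then control kernel and cokernel $\ell$-adically via the Kummer sequence and smooth--proper base change. The SGA6 input the paper alludes to is exactly the package you invoke implicitly: the representability/structure of $\Pic^0$ so that it is $\ell$-divisible, and the compatibility of specialization with algebraic equivalence and with $\ell$-adic cycle classes. Your self-identified ``delicate'' points are the right ones; in particular, the claim that the cospecialization isomorphism on $H^2_{\et}(-,\Z_\ell(1))$ carries $c_1$ to $c_1\circ\spe$ deserves a line of justification (factor through $H^2_{\et}(\X,\Z_\ell(1))$ and use that both $c_1$'s are restrictions of the Chern class on the total space), but it is standard. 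Nothing in your sketch is wrong.
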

The proof of this proposition uses many facts from \cite{SGA6}.

\begin{prop}\cite{MauPoo}*{Corollary 3.9}
  Let  $k\subseteq k'$ be algebraically closed fields. Let $B$ be an irreducible $k$-variety. For a smooth and proper morphism $\X\to B$, $B(k')_{\text{jumping}}\coloneqq \{b\in B(k'): \rho(\X_b)>\rho(\X_{\ov\eta})\} $ is the union of $Z(k')$ where $Z$ ranges over a countable union of Zariski closed $k$-subvarieties of $B$.
\end{prop}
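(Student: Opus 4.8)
The plan is to prove this by Noetherian induction on $B$, the inductive step resting on the structure of the relative N\'eron--Severi space of the family over a dense open of $B$, together with the semicontinuity statement $\rho(\X_{\ov s})\ge\rho(\X_{\ov t})$ from \cite{MauPoo}*{Proposition 3.6}. At the outset I would reduce to the case $\X\to B$ projective, as in the later sections of this paper that deduce the proper cases of Theorems \ref{thm:2} and \ref{thm:3a} from the projective ones. So let $B$ be irreducible, set $\rho_0=\rho(\X_{\ov\eta})$, and assume the proposition known for all smooth projective families over $k$-varieties of dimension $<\dim B$.

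Over a suitable dense open $U\subseteq B$ one may assume $\mathrm{Pic}^0_{\X_U/U}$ is an abelian scheme and the finite group scheme $\mathrm{Pic}^{\tau}_{\X_U/U}/\mathrm{Pic}^0_{\X_U/U}$ is flat over $U$, so that $\mathrm{Pic}^{\tau}_{\X_U/U}$ is proper and flat over $U$ and the quotient $\mathrm{NS}_{\X_U/U}:=\mathrm{Pic}_{\X_U/U}/\mathrm{Pic}^{\tau}_{\X_U/U}$ is a separated algebraic space, locally of finite type and unramified over $U$, with $\mathrm{NS}_{\X_U/U,\,b}\cong\NS(\X_b)/(\text{torsion})\cong\Z^{\rho(\X_b)}$ for every geometric point $b$ of $U$. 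I would then establish two facts. First, $\mathrm{NS}_{\X_U/U}$ has only countably many connected components: by projectivity every line bundle on a fibre has the form $\O(D)(-m)$ for an effective $D$ of some Hilbert polynomial $P$ and some $m\ge 0$, so $\mathrm{Pic}_{\X_U/U}$ is covered by the images of the Noetherian schemes $\mathrm{Hilb}^P_{\X_U/U}$ over the countably many pairs $(P,m)$, each of which has finitely many components. Second, every connected component $C$ of $\mathrm{NS}_{\X_U/U}$ is \emph{finite} over $U$: since $C\to U$ is quasi-finite (unramifiedness) and separated, by Zariski's main theorem it suffices to verify the valuative criterion of properness, and for a discrete valuation ring $R$ with $\Spec R\to U$ a $\Frac(R)$-point of $\mathrm{NS}_{\X_U/U}$ is a numerical divisor class on the generic fibre, which extends over $\Spec R$ by the specialization construction of Remark \ref{rmk:des} and \cite{MauPoo}*{Proposition 3.6}; as $C$ is open and closed and $\Spec R$ is irreducible, the extension stays in $C$. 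Grouping components into $\Gal(\ov k/k)$-orbits yields a countable family $\{C_i\}$ of closed subschemes finite over $U$, defined over $k$, with $\mathrm{NS}_{\X_U/U}=\bigsqcup_i C_i$.

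For an $r$-tuple $\underline i=(i_1,\dots,i_r)$ the fibre product $C_{i_1}\times_U\cdots\times_U C_{i_r}$ is finite over $U$, and on each of its finitely many connected components $C'$ the $r$ tautological classes in $\NS(\X_b)$ vary locally constantly in $b$ (say through their $\ell$-adic cycle classes in the lisse sheaf $R^2f_*\Q_\ell(1)$, $\ell\ne p$), so whether they are numerically independent is constant on $C'$; call $(\underline i,C')$ admissible in that case. Then $\{b\in U:\rho(\X_b)\ge r\}$ is the union, over the countably many admissible pairs, of the images of the finite maps $C'\to U$, hence a countable union of closed subvarieties $Y_\alpha$ of $U$. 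Taking $r=\rho_0+1$, each closure $\ov{Y_\alpha}$ in $B$ is contained in $\{b\in B:\rho(\X_b)\ge\rho_0+1\}$, because its generic point $\eta_\alpha$ has $\rho(\X_{\ov{\eta_\alpha}})\ge\rho_0+1$ and every specialization $b$ of $\eta_\alpha$ has $\rho(\X_b)\ge\rho(\X_{\ov{\eta_\alpha}})$ by the semicontinuity of \cite{MauPoo}*{Proposition 3.6}. Hence the jumping locus equals $\bigl(\bigcup_\alpha\ov{Y_\alpha}\bigr)\cup\{b\in B\setminus U:\rho(\X_b)>\rho_0\}$; the second set is handled by applying the inductive hypothesis to $\X$ restricted to each irreducible component $Z$ of $B\setminus U$ with $\rho(\X_{\ov{\eta_Z}})=\rho_0$, while any component $Z$ with $\rho(\X_{\ov{\eta_Z}})>\rho_0$ lies entirely in the jumping locus and may be taken as one closed piece. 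Since none of this is affected by enlarging the algebraically closed base field, the resulting countable family of closed $k$-subvarieties $Z$ satisfies $B(k')_{\mathrm{jumping}}=\bigcup Z(k')$.

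I expect the main obstacle to be the structural input over $U$ --- above all the assertion that each connected component of $\mathrm{NS}_{\X_U/U}$ is genuinely finite (not merely quasi-finite) over $U$, which is precisely where partial properness and the specialization map of \cite{MauPoo}*{Proposition 3.6} are needed --- together with the bookkeeping required to carry out the reduction from the proper to the projective case and to keep the countable families under control as they are grouped over Galois orbits and combined through the Noetherian induction.
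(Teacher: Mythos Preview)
The paper does not give its own proof of this proposition; it is quoted as background with attribution to \cite{MauPoo}*{Corollary 3.9}, so there is nothing in the present text to compare your argument against. Your outline is essentially the argument of Maulik--Poonen: over a dense open one forms the relative N\'eron--Severi space $\Pic_{\X_U/U}/\Pic^{\tau}_{\X_U/U}$, uses Hilbert schemes to see it has only countably many components, shows each component is finite over $U$ via the specialization map, detects the locus $\{\rho\ge r\}$ as a countable union of images of fibre products of such components, and finishes by Noetherian induction over the complement of $U$. Your proposal is correct and matches the cited source.
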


\subsection{Adic lifts of line bundles}

Throughout this section, $A$ will be a complete noetherian local ring with algebraically closed residue field $k$, $\m$ its maximal ideal, and $X\to \Spec A$ a finite type $A$-scheme. We set $A_n=A/\m^{n+1}$ and $X_n=X\otimes_A A/\m^{n+1}$.
\begin{defi}
 Let $\L$ on $X_0$ be a line bundle. We say that $\L$ lifts to $X$ if there is a line bundle $\L'$ on $X$ such that $\L'|_{X_0}\cong \L$. We say that an element of $x\in \NS(X_0)$ lifts to $X$ if there is a line bundle $\L$ on $X$ such that the image of $\L|_{X_0}$ in $\NS(X_0)$ is $x$.
\end{defi}
\begin{defi}
 Let $\L$ on $X_0$ be a line bundle. We say that $\L$ lifts adically to $X$ if for each $n$, there exists a line bundle $\L_n$ on $X_n$ such that  $\L_n|_{X_0}\cong \L$. We say that an element $x\in \NS(X_0)$ lifts adically to $X$, if for each $n$, there exists an line bundle $\L_n$ on $X_n$ such that the image of $\L_n|_{X_0}$ in $\NS(X_0)$ is $x$.
\end{defi}

\begin{assumption}\label{ass:1}
   A map of schemes $X\to S$ sastisfies {\em Assumption \ref{ass:1}}, if $X\to S$ is projective, the Picard scheme $\Pic_{X/S}$ exists, and furthermore, there exists a subgroup scheme $G$ of $\Pic^\tau_{X/S}$ which is smooth over $S$, and a finite $S$-group scheme $F$ and an exact sequence of group schemes $1\to G\to \Pic^\tau_{X/S}\to F$. We say $X\to S$ satisfies {\em Assumption \ref{ass:1} with respect to $m$}, if $mF=0$.
\end{assumption}

\begin{prop}\label{prop:ass1holds}
  Let $X\to S$ be a smooth, projective morphism of noetherian schemes with $S$ reduced. Then there exists a dense open subscheme $U\subseteq S$ such that Assumption $1$ holds for the restriction $X_U\to U$.
\end{prop}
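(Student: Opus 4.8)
The plan is to shrink $S$ until $\Pic_{X/S}$ becomes a well-behaved proper flat group scheme, and then to produce the required smooth subgroup scheme $G$ by spreading out a full-dimensional abelian subvariety of the Picard scheme of the generic fibre.

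First I would reduce to the case that $S$ is integral (work on each irreducible component separately and glue the resulting dense opens) and that $X\to S$ has geometrically connected fibres: after shrinking $S$ the Stein factorization $X\to S'\to S$ has $S'\to S$ finite \'etale and $X\to S'$ with geometrically connected fibres, and since $\Pic_{X/S}=\Res_{S'/S}\Pic_{X/S'}$ compatibly with the $\tau$-part, and Weil restriction along a finite \'etale morphism carries smooth (resp.\ finite) group schemes to smooth (resp.\ finite) group schemes and is left exact, Assumption \ref{ass:1} for $X\to S'$ implies it for $X\to S$. Next, since $S$ is reduced, on a dense open $U_1\subseteq S$ the sheaf $f_*\O_X$ is locally free with formation commuting with base change, i.e.\ $f\colon X\to S$ is cohomologically flat in degree $0$ over $U_1$; as $f$ is projective, Grothendieck's representability theorem gives that $\Pic_{X/U_1}$ is representable by a scheme, separated and locally of finite type over $U_1$, and by \cite{SGA6} (using smoothness and projectivity of $f$) the open and closed subgroup scheme $\Pic^\tau_{X/U_1}$ is of finite type over $U_1$. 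By generic flatness there is a dense open $U_2\subseteq U_1$ over which $P\coloneqq \Pic^\tau_{X/U_2}$ is faithfully flat; I would also shrink so that $\Pic^\tau_{X/U_2}\to U_2$ is proper (this holds on a dense open, either by \cite{SGA6} or because properness spreads out from the generic fibre).

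Let $\eta$ be the generic point of $U_2$ and $K=\kappa(\eta)$. The fibre $P_\eta=\Pic^\tau_{X_\eta/K}$ is a proper commutative $K$-group scheme whose identity component $\Pic^0_{X_\eta/K}$ has finite component group. Inside $\Pic^0_{X_\eta/K}$ there is a full-dimensional abelian subvariety $A_\eta$: over a perfect field one takes $(\Pic^0_{X_\eta/K})_{\mathrm{red}}$, and in general $A_\eta$ can be produced as the dual of the Albanese variety of $X_\eta$ (or via the reduced part after descent from a finite purely inseparable extension). Thus $G_\eta\coloneqq A_\eta$ is a smooth closed subgroup scheme of $P_\eta$ with finite quotient $F_\eta\coloneqq P_\eta/G_\eta$, the latter being an extension of the component group of $P_\eta$ by the finite infinitesimal group $\Pic^0_{X_\eta/K}/A_\eta$. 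Finally I would spread out: $A_\eta$ is a finitely presented closed subgroup scheme of $P_\eta$, so it extends to a closed subgroup scheme $G\hookrightarrow P_{U_3}$ over a dense open $U_3\subseteq U_2$; since smoothness, properness and geometric connectedness of fibres each hold on a dense open of the base once they hold over $\eta$, after a further shrink to a dense open $U\subseteq U_3$ the group scheme $G\to U$ is an abelian scheme. Then $F\coloneqq P_U/G$ is an fppf group scheme over $U$ which is flat (quotient of the flat $P_U$ by the flat $G$), proper (quotient of a proper scheme by a proper flat subgroup scheme, or: properness spreads out from $F_\eta$), and quasi-finite (its fibre over $\eta$ is the finite $F_\eta$, and quasi-finiteness is open on the base), hence finite over $U$. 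The exact sequence $1\to G\to \Pic^\tau_{X/U}\to F$ then exhibits Assumption \ref{ass:1} for $X_U\to U$, with $G$ smooth over $U$ and $F$ finite over $U$.

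The main obstacle is the structural input in the previous paragraph: over the (typically imperfect, in characteristic $p$) residue field $K$ one must know that $\Pic^0_{X_\eta/K}$ contains a smooth closed subgroup scheme of full dimension, i.e.\ is abelian up to a finite infinitesimal subgroup. The remaining ingredients — representability of the Picard scheme, finite type and properness of $\Pic^\tau$, generic flatness, and spreading-out of closed subschemes together with the properties smooth, proper and quasi-finite — are standard.
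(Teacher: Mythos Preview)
Your approach is essentially the paper's: reduce to $S$ integral, construct a smooth subgroup $G_\eta\subseteq\Pic^\tau_{X_\eta/k(\eta)}$ with finite cokernel at the generic point, then spread out. The paper's proof is a terse three-liner taking $G_\eta=\Pic^0_{X_\eta/k(\eta),\mathrm{red}}$ directly and spreading out both it and the finite quotient $C$, whereas you supply the preliminary reductions (Stein factorization to geometrically connected fibres, cohomological flatness, flatness and properness of $\Pic^\tau$) and build $F$ as the fppf quotient $P_U/G$. You also correctly flag---and the paper arguably glosses over---the subtlety that over the possibly imperfect field $k(\eta)$ the reduced subscheme need not be a smooth subgroup scheme; your suggested fix via the dual of the Albanese variety is a valid way to produce the required full-dimensional abelian subvariety.
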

\begin{proof}
  We begin by noting that the hypotheses are sufficient for the Picard scheme to exist. See \cite{FGA-explained:Kleiman} Theorem 9.4.8.
  
  Now, without loss of generality, we may shrink $S$ so that it is integral. Let $\eta\in S$ be the generic point. Let $k(\eta)$ be the residue field at $\eta$. We have an exact sequence $1\to \Pic_{X_\eta/k(\eta),\text{red}}^0\to \Pic_{X_\eta/k(\eta)}^\tau\to C\to 1$ for some finite $k(\eta)$-group scheme $C$. We recall that $\Pic^0_{X_\eta/k(\eta),\text{red}}$ is smooth. We then spread out $\Pic_{X_\eta/k(\eta)}^0$ to a smooth group scheme $G$ and $C$ to a finite group scheme $F$ over some open $U$ containing $\eta$. Indeed, we may spread out in such a way so that there is an exact sequence $1\to G\to \Pic^\tau_{X_U/U}\to F$.
\end{proof}

\begin{prop}\label{prop:lifting-bundles}
  Assume that $X\to \Spec A$ has a section and satisfies Assumption \ref{ass:1}  with respect to $m$. For a line bundle $\L$ on $X_0$, if $\L$ lifts adically to $X$, then $m\L$ lifts to $X$. Similarly, if $x\in \NS(X_0)$ lifts adically to $X$, then $mx$ lifts to $X$.
\end{prop}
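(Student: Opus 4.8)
The plan is to exploit the exact sequence $1 \to G \to \Pic^\tau_{X/S} \to F$ with $mF = 0$, together with the section of $X \to \Spec A$ which lets us identify $\Pic_{X/S}(T)$ with isomorphism classes of line bundles on $X_T$ rigidified along the section (so that $\Pic_{X/S}$ genuinely represents the line bundle functor, not just the sheafified version). First I would reduce the $\NS$ statement to the line bundle statement: if $x \in \NS(X_0)$ lifts adically, then for each $n$ there is a line bundle $\L_n$ on $X_n$ with $[\L_n|_{X_0}] = x$; the differences $\L_n|_{X_0} \otimes \L^{-1}$ (for a fixed representative $\L$ of $x$) lie in $\Pic^0(X_0)$, which is $m$-divisible since $G$ is smooth, so after adjusting we may assume $\L_n|_{X_0} \cong \L$ on the nose and apply the line-bundle case to conclude $m x = [\L^{\otimes m}]$ lifts.

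**The main argument for line bundles.** Given $\L$ on $X_0$ lifting adically, I would first observe that since $X_0 \to \Spec k$ is proper and we only care about $\L$ up to algebraic (hence numerical) equivalence type $\tau$, we may assume $\L \in \Pic^\tau(X_0)$; the general case reduces to this by subtracting a globally-defined ample class. Now the point $[\L] \in \Pic^\tau_{X/S}(k)$ has an $m$-multiple $m[\L]$ landing in $G(k)$ by the assumption $mF=0$. The crux is then deformation theory: because $G$ is \emph{smooth} over $S = \Spec A$, the map $G(A) \to G(A_n)$ is surjective for all $n$, and in fact $G(A) \to G(A_0) = G(k)$ is surjective (smoothness plus completeness of $A$: lift step by step through $A_{n+1} \twoheadrightarrow A_n$, the obstruction living in a group that vanishes by smoothness, then take the limit, which converges since $A$ is complete and $G$ is of finite type). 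So the class $m[\L] \in G(k) \subseteq \Pic^\tau_{X/S}(k)$ lifts to an $A$-point of $G$, i.e. to a line bundle $\M$ on $X$ with $[\M|_{X_0}] = m[\L]$ in $\Pic^\tau(X_0)$.

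**Closing the gap between $\Pic^\tau$ and honest line bundles.** What remains is that $[\M|_{X_0}] = m[\L]$ as points of $\Pic^\tau(X_0)$ only says $\M|_{X_0} \otimes \L^{-\otimes m} \in \Pic^0(X_0)$ (or even just that they agree in $\NS$, if we took the reduction above), so we have not yet literally lifted $m\L$ but rather something numerically equivalent to it — which is already enough for the $\NS$ statement, and for the line bundle statement we use that $\L$ lifts \emph{adically}: the hypothesis gives $\L_n$ on $X_n$ with $\L_n|_{X_0} \cong \L$, and comparing $\M|_{X_n}$ with $\L_n^{\otimes m}$ gives an element of $\Pic^0(X_n)$ reducing to an element of $\Pic^0(X_0)$ that we can kill by twisting by a section of $G(A) \to G(A_n)$ (surjective by smoothness), after which $\M$ (so adjusted) restricts to $\L^{\otimes m}$ on $X_n$; but we need this compatibly across all $n$, so really the clean route is to run the argument directly with the $m$-divisibility of $\Pic^0$: since $\Pic^0(X_0) = G^0(k)$ is divisible, absorb the discrepancy $\M|_{X_0}\otimes\L^{-\otimes m}$ into an $m$-th power and re-choose $\L$.

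**Main obstacle.** I expect the real work to be the surjectivity of $G(A) \to G(k)$ and the bookkeeping needed to pass from a statement about $\Pic^\tau$-points to a statement about actual line bundles on the scheme $X$ (as opposed to line bundles on the formal scheme, or cohomology classes). The section of $X \to \Spec A$ is exactly what makes this bookkeeping work — it rigidifies everything and makes $\Pic_{X/S}$ represent the naive functor — and keeping careful track of where ``adically liftable'' is used (versus merely ``liftable'') is where the argument is most delicate; this is presumably why the two hypotheses (a section, and Assumption \ref{ass:1}) both appear.
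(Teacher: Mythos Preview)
Your reduction ``we may assume $\L \in \Pic^\tau(X_0)$; the general case reduces to this by subtracting a globally-defined ample class'' is the gap. Even if $X/A$ is projective with a global ample $H$, the class $[\L] - n[H|_{X_0}]$ lands in $\Pic^\tau$ only when $[\L]$ is a rational multiple of $[H|_{X_0}]$ in $\NS(X_0)$, which fails as soon as $\rho(X_0) > 1$. And without this reduction your core step collapses: for general $\L$ the class $m[\L]$ does not lie in $G(k)$ (it is not even in $\Pic^\tau(k)$), so the surjectivity of $G(A) \to G(k)$ gives you nothing. Notice, too, that your argument for $\L \in \Pic^\tau$ never uses the adic lifting hypothesis at all --- that is a warning sign, since adic liftability is exactly what distinguishes the $\L$'s for which the conclusion holds.

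The paper does not attempt any such reduction. Instead it uses the adic lifts $\L_n$ directly to build a compatible system: one sets $\M_0 = m\L_0$ and inductively arranges $\M_n$ on $X_n$ with $\M_n|_{X_{n-1}} \cong \M_{n-1}$ and $\M_n - m\L_n \in G(A_n)$. The inductive step works because $\L_{n+1}|_{X_n} - \L_n$ restricts to zero in $\NS(X_0)$ and hence lies in $\Pic^\tau(A_n)$, so $m$ times it lies in $G(A_n)$; then $\M_n - m\L_{n+1}|_{X_n} \in G(A_n)$, and smoothness of $G$ lifts this to $G(A_{n+1})$, after which one adds back $m\L_{n+1}$ to get $\M_{n+1}$. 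Grothendieck algebraization then produces an honest line bundle on $X$. This is where the adic hypothesis is genuinely consumed: each $\L_n$ is used as a scaffold at level $n$, and smoothness of $G$ is used to correct the incompatibilities between successive scaffolds. Your ``closing the gap'' paragraph gestures toward comparing $\M|_{X_n}$ with $m\L_n$, which is close in spirit, but you have the logic inverted --- you cannot first produce a global $\M$ and then fix it up; the $\L_n$'s must drive the construction from the start. (Your reduction of the $\NS$ statement to the line-bundle statement is also shaky: the differences $\L_n|_{X_0} - \L$ lie in $\Pic^\tau(X_0)$, not necessarily in $G(k)$, and $m$-divisibility does not help you lift them to $X_n$. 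The paper handles both statements simultaneously by only requiring $\L_n|_{X_0}$ and $\L$ to agree in $\NS(X_0)$.)
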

\begin{proof}
  Let $G$ and $F$ be as given in Assumption \ref{ass:1}. Choosing a section $\sigma$ of $X\to \Spec A$, for any $A$-scheme $S$ we identify $\Pic_{X/A}(S)$ with isomorphism classes of line bundles on $X\times_A S$ along with a chosen trivialization along $\sigma\times_A S$ (see \cite{FGA-explained:Kleiman} Theorem 9.2.5).
  
 We may prove the two results simultaneously by showing that if we have a line bundle $\L$ on $X_0$ and line bundles $\L_n$ on each $X_n$ such that $\L_n|_{X_0}$ has the same class as $\L$ in $\NS(X_0)$, then there is a lift of $m\L$. We thus assume we are in this situation.

  Giving a lift of a line bundle on $X_0$ to $X$, by Grothendieck's algebraization theorem, is the same giving as a compatible system of line bundles on each $X_n$ lifting the chosen line bundle.by a line bundle for which the corresponding map

  Let $\M_0=m\L_0$ on $X_0$.

  We will inductively define $\M_n $ on $X_n$ such that $\M_{n}|_{X_{n-1}}\cong \M_{n-1}$ and such that the line bundle $\M_n-m\L_n$ corresponds to a map $\Spec A/\m^{n+1} \to \Pic_{X/A}$ which factors through $G$.

  Take $\M_0=m\L_0$. Assume $\M_k$ has been defined for $k\leq n$. We have $\L_{n+1}|_{X_n}- \L_n$ is in $\Pic^\tau_{X/A}$ as the restriction to $X_0$ has value $0$ in $\NS(X_0)$. Thus $m(\L_{n+1}|_{X_n}- \L_n)$ lies in the image of $G(A_n)$. Thus $\M_n-m\L_{n+1}|_{X_n}$ lies in the image of $G(A_n)$. As $G$ is smooth, we extend this map to a map $\Spec A/\m^{n+2}\to G$. Let $\M_{n+1}'$ be the corresponding line bundle, and let $\M_{n+1}=\M_{n+1}'+m\L_{n+1}$. We then have that $\M_{n+1}$ restricts to $\M_n$ on $X_n$, and that the difference of $\M_{n+1}$ and $m\L_{n+1}$ comes from a map $\Spec A/\m^{n+2}\to G$.

  We have thus constructed a compatible system of line bundles $\M_n$ which restrict to $m\L$ on $X_0$, completing the proof.
\end{proof}

\begin{rmk}\label{rmk:lifting-p-power}
  Let $\L$ be a line bundle on $X_0$. Suppose that $\chara k=p>0$. As we will now explain, for any line bundle $\L$ on $X_0$ and any $n\geq 0$, the line bundle $p^n\L$ lifts to $X_n$. Indeed, this can be seen inductively, noting that if $\M$ is a line bundle on $X_n$, the obstruction to lifting $\M$ to $X_{n+1}$ lives in $H^2(X_0,\O_{X_0})\otimes_k \m^{n+1}/\m^{n+2}$. This obstruction is additive in the line bundle. As $k$ is of characteristic $p$, the obstruction to lifting $p\M$ vanishes, and thus $p\M$ lifts to $X_{n+1}$. 
\end{rmk}
\begin{defi}
  Assume $\chara k=p>0$. Let $x \in \NS(X_0)\otimes \Z_p$. We say that $x$ lifts to  $X_n$ if there exists a line bundle $\L$ on $X_n$ whose image in $\NS(X)/p^{n}$ is equal to that of $\ell$. We say that $x\in \NS(X_0)\otimes \Z_p$ lifts adically to $X$ if it lifts to $X_n$ for each $n$.
\end{defi}

\begin{rmk}
  By Remark \ref{rmk:lifting-p-power}, whether $x\in \NS(X_0)\otimes \Z_p$ lifts to $X_n$ only depends on the image of $x$ in $\NS(X_0)/p^n$. 
\end{rmk}
\begin{defi}
  Assume $\chara k=p>0$.  Let $x \in \NS(X_0)\otimes \Z_p$. We say that $x$ lifts to $X$, if $x$ is in the image of $\varprojlim_n (\Pic(X_n)\otimes \Z_p)\to \NS(X_0)\otimes \Z_p$.
\end{defi}

Similar to Proposition \ref{prop:lifting-bundles}, we have the following:
\begin{prop}\label{prop:compatible-padic-bundles}
  Assume $X\to \Spec A$ has a section and satisfies Assumption \ref{ass:1} with respect to $m$. If $x\in \NS(X_0)$ lifts adically to $X$, then $mx$ lifts to $X$.
\end{prop}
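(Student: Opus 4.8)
The plan is to run the inductive construction from the proof of Proposition~\ref{prop:lifting-bundles} essentially verbatim and then simply \emph{not} pass to an algebraization: a compatible system of line bundles on the thickenings $X_n$ is, by the definition just given, already exactly the data needed to witness that a class lifts to $X$, so Grothendieck's existence theorem plays no role here.

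Concretely, fix a section $\sigma$ of $X\to\Spec A$ and use it to rigidify, identifying $\Pic_{X/A}(A_n)$ with isomorphism classes of line bundles on $X_n$ trivialized along $\sigma$. Let $1\to G\to\Pic^\tau_{X/A}\to F$ be as in Assumption~\ref{ass:1}, with $mF=0$. The hypothesis that $x$ lifts adically to $X$ supplies, for each $n$, a line bundle $\L_n$ on $X_n$ with $[\L_n|_{X_0}]=x$ in $\NS(X_0)$; take $\L_0$ to be the $n=0$ member, so $[\L_0]=x$. I would then build inductively line bundles $\M_n$ on $X_n$ with $\M_n|_{X_{n-1}}\cong\M_{n-1}$, with $\M_0=m\L_0$, and with $\M_n-m\L_n$ in the image of $G(A_n)$ inside $\Pic^\tau_{X/A}(A_n)$. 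The inductive step is verbatim the one in Proposition~\ref{prop:lifting-bundles}: since $\L_{n+1}|_{X_0}$ and $\L_n|_{X_0}$ both have class $x$, the difference $\L_{n+1}|_{X_n}-\L_n$ lies in $\Pic^\tau_{X/A}(A_n)$, hence its $m$-fold multiple lies in the image of $G(A_n)$; adding this to $\M_n-m\L_n$ shows $\M_n-m\L_{n+1}|_{X_n}$ lies in the image of $G(A_n)$; smoothness of $G$ lifts this to a $G(A_{n+1})$-point, whose associated rigidified line bundle $\M'_{n+1}$ on $X_{n+1}$ we use to set $\M_{n+1}:=\M'_{n+1}+m\L_{n+1}$.

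The resulting system $(\M_n)_n$ lies in $\varprojlim_n\Pic(X_n)$, and its image under the natural map $\varprojlim_n\Pic(X_n)\to\Pic(X_0)\to\NS(X_0)$ is $[m\L_0]=mx$. Mapping through $-\otimes\Z_p$ then exhibits the image of $mx$ inside the image of $\varprojlim_n(\Pic(X_n)\otimes\Z_p)\to\NS(X_0)\otimes\Z_p$, i.e.\ $mx$ lifts to $X$.

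I do not expect a genuine obstacle, since this is a transcription of Proposition~\ref{prop:lifting-bundles}; the only points requiring care are the usual bookkeeping ones — keeping the restriction isomorphisms $\M_n|_{X_{n-1}}\cong\M_{n-1}$ honest so that $(\M_n)$ really is an element of the inverse limit and remains one after $-\otimes\Z_p$, and checking that the mutual incompatibility of the $\L_n$ is absorbed by $G$, which is exactly where smoothness of $G$ and the relation $mF=0$ are used. Should one instead wish to start from the $p$-adic form of adic lifting, in which $[\L_n|_{X_0}]$ agrees with $x$ only modulo $p^n$, the same scheme goes through after first correcting each $\L_n$ by a line bundle accounting for the $p^n$-divisible discrepancy, which lifts to $X_n$ by Remark~\ref{rmk:lifting-p-power}.
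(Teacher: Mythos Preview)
Your main argument reads ``lifts adically'' in the \emph{integral} sense, so that $[\L_n|_{X_0}]=x$ on the nose in $\NS(X_0)$; under that reading the proposition is literally the second assertion of Proposition~\ref{prop:lifting-bundles}, and nothing new is happening. The proposition here, placed immediately after the $p$-adic definitions, is meant with the $p$-adic notions: ``lifts adically'' only gives $[\L_n|_{X_0}]\equiv x\pmod{p^{n}}$, and ``lifts to $X$'' means lying in the image of $\varprojlim_n(\Pic(X_n)\otimes\Z_p)$. Your final paragraph does address this reading, and the correction you sketch --- replace $\L_n$ by $\L_n-\N_n$ where $\N_n$ lifts $p^n\mathcal{K}$ to $X_n$ for some $\mathcal{K}$ with $p^n[\mathcal{K}]=[\L_n|_{X_0}]-x$, then run Proposition~\ref{prop:lifting-bundles} --- is valid for $x\in\NS(X_0)$ and indeed shorter than the paper's argument.

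The paper's proof is more elaborate because it is really written for $x\in\NS(X_0)\otimes\Z_p$, and this is how the proposition is actually used later (e.g.\ in the proof of Theorem~\ref{thm:3b}, applied to $\Z_p$-linear combinations $\sum a_i\ell_{m+1,i}$). In that generality your correction breaks down: the discrepancy $[\L_n|_{X_0}]-x$ lives only in $\NS(X_0)\otimes\Z_p$, so no honest line bundle represents it and Remark~\ref{rmk:lifting-p-power} cannot be invoked directly. The paper instead fixes integral classes $y_1,\ldots,y_t$ mapping to a $\Z_p$-basis of the free part of $\NS(X_0)\otimes\Z_p$, lifts $p^{n-N}y_j$ to $\N_{n,j}$ on $X_n$, and runs the entire induction inside $\Pic(X_n)\otimes\Z_p$, using $\Z_p$-combinations $\L_n+\sum_j a_{n,j}\N_{n,j}$ to hit $x$ exactly and applying smoothness of $G$ to $G(A_n)\otimes\Z_p$. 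So for the statement as literally written your approach is correct and simpler; for the statement as intended and used, one genuinely needs to work in $\Pic\otimes\Z_p$ rather than in $\Pic$, which is precisely what the paper's extra machinery accomplishes.
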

\begin{proof}

Again, let $G$ and $F$ be as given in Assumption \ref{ass:1}. Choosing a section $\sigma$ of $X\to \Spec A$, for any $A$-scheme $S$ we identify $\Pic_{X/A}(S)$ with isomorphism classes of line bundles on $X\times S$ along with a chosen trivialization along $\sigma\times_A S$ (see \cite{FGA-explained:Kleiman} Theorem 9.2.5)

  For all $n$, let $\L_n$ be a line bundle on $X_n$ whose image in $\NS(X_0)/p^{n+1}$ is that of $x$. 

  As $\NS(X_0)$ is a finitely generated abelian group, there exists an $N$ such that for $n\geq N$, $p^n(\NS(X_0)\otimes \Z_p)$ is free. Choose $y_1,\ldots, y_t$ of $p^N\NS(X_0)$ which map to a basis of $p^N(\NS(X_0)\otimes\Z_p)$. For $n\geq N$, let $\N_{n,j}$ be a line bundle on $X_n$ lifting $p^{n-N}y_j$. This is possible by Remark \ref{rmk:lifting-p-power}. There are unique $a_{n,j}\in \Z_p$ such that the image of $\L_n +\sum_j a_{n,j}\N_{n,j}$ in $\NS(X_0)\otimes \Z_p$ is $x$.

  For $n\geq N$, we will inductively define $\M_n$ such that $\M_n|_{X_{n-1}}\cong \M_{n-1}$ and $\M_n-m(\L_n+\sum_j a_{n,j}\N_{n,j})$ is in the image of $G(A_n)\otimes \Z_p$. 
  As a base case we take $\M_N=m\L_N$. Assume that $\M_k$ has been defined for all $N\leq k\leq n$.

  Setting $b_{n,j}=pa_{n+1,j}-a_{n,j}$, we have $\L_{n+1}|_{X_n}+\sum_{j}b_{n,j}\N_{n,j}$ and  $\L_n$ have the same image in $\NS(X_0)$. Therefore, $m\L_n -m(\L_{n+1}|_{X_n}+\sum_{j}b_{n,j}\N_{n,j})$, $\L_n'-m(\L_n+\sum_j a_{n,j}\N_{n,j})$, and $mp\N_{n,j}-m\N_{n+1,j}|_{X_n}$ are all in the image $G(A_n)$. Thus, we find an element $D_{n+1}'\in \Pic(X_n)\otimes \Z_p$ in the image of $G(A_n)\otimes \Z_p$ equal to \[\left(\L_n'-m(\L_n+\sum_j a_{n,j}\N_{n,j})\right)    +\left(m\L_n -m(\L_{n+1}|_{X_n}+\sum_{j}b_{n,j}\N_{n,j})\right)+m\sum_{j}a_{n+1,j}(\N_{n+1,j}|_{X_n}-p\N_{n}).\]
  We lift $D_{n}'$ to $D_{n+1}\in \Pic(X_{n+1})\otimes \Z_p$, and set $\M_{n+1}=m\L_{n+1}+\sum_j a_{n+1,j}\N_{n,j+1}+D_{n+1}$. By construction, $\L_{n+1}'|_{X_n}\cong \L_n'$ and $\M_{n+1}-m(\L_{n+1}+\sum_ja_{n,j}\N_{n,j})$ is in the image of $G(A_{n+1})\otimes \Z_p$.

  Thus we get a compatible system $\{\L_n'\}$ giving the desired element in $\varprojlim_n \Pic(X_n)\otimes \Z_p$.
\end{proof}

\begin{rmk}\label{rmk:p-adic-Picard}
  Let $x\in \NS(X_0)\otimes \Z_p$. For every $n$, choose a line bundle $\M_n$ on $X_0$ which has the same image as $x$ in $\NS(X_0)/p^{n}$. Consider the connected component of the Picard scheme $\Pic_{X_n/A_n}$ containing the point corresponding to $\M_n$. As all $p^n$th powers of line bundles lift to $X_n$, the isomorphism class of this component as an $A_n$-scheme with $\Pic^0$-action does not depend on choice of $\M_n$. Call the component $P_n$. Now, $\M_n$ and $\M_{n+1}$ have the same image in $\NS(X_0)/p^{n}$. Thus by Remark \ref{rmk:lifting-p-power}, we may find $\L_n$ on $X_n$ whose image in $\NS(X_0)$ is the same as the image of $\M_n-\M_{n+1}$. The map sending a line bundle $\L$ on $X_{n}$ to $\L+\L_n$ identifies $P_{n+1}\otimes_{A_{n+1}} A_n$ with $P_n$. We let $\Pic^x_{X/A}$ be the formal scheme associated to this system of $P_n$. That is $\Pic^x_{X/A}=\varinjlim_n P_n$ where the transition maps are the ones described above. The formal scheme $\Pic^x_{X/A}$ up to isomorphism is independent the choices of line bundles $\M_n$ and $\L_n$.
\end{rmk}

We will need the following proposition when working with these $\Pic^x_{X/A}$
\begin{prop}\label{prop:liftingformal}
  Let $A=k[[t,x_1,\ldots,x_n]]$ and let  $f:\X\to \Spf A$ be a proper map of formal schemes. Assume furthermore that the scheme-theoretic image of $\X $ in $\Spf A$ is all of $\Spf A$. Then for any $\Spf k[[t]]$-morphism $y:\Spf k[[t]]\to \Spf A$ there exists a finite extension $R$ of $k[[t]]$, such that there is a $\Spf k[[t]]$-morphism $\Spf R\to \X_y=\X\times_{\Spf A,y}\Spf k[[t]]$. (All rings mentioned are given their ideal of definition given by their maximal ideal).
\end{prop}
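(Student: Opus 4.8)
The plan is to algebraize $\X$, transport the scheme-theoretic dominance hypothesis into the algebraic world, and then conclude by the valuative criterion of properness.

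First I would algebraize: by Grothendieck's existence theorem the proper formal scheme $\X\to\Spf A$ is the formal completion, along $\m=(t,x_1,\dots,x_n)$, of a proper $A$-scheme $g\colon\mathcal{X}\to\Spec A$. Then $\Gamma(\mathcal{X},\O_{\mathcal{X}})$ is a finite $A$-module, hence $\m$-adically complete (since $A$ is), so by the theorem on formal functions it coincides with $\Gamma(\X,\O_{\X})$. The hypothesis that the scheme-theoretic image of $\X$ in $\Spf A$ is everything says exactly that $A\hookrightarrow\Gamma(\X,\O_{\X})=\Gamma(\mathcal{X},\O_{\mathcal{X}})$, i.e.\ that $g$ is scheme-theoretically dominant. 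As $g$ is proper its image is closed and coincides with the underlying set of its scheme-theoretic image, which is now all of $\Spec A$; hence $g$ is surjective.

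Next, the $\Spf k[[t]]$-morphism $y$ is the same datum as a continuous $k[[t]]$-algebra homomorphism $A\to k[[t]]$, equivalently a $k[[t]]$-morphism of schemes $y\colon\Spec k[[t]]\to\Spec A$. Put $\mathcal{Y}=\mathcal{X}\times_{\Spec A}\Spec k[[t]]$, a proper $k[[t]]$-scheme whose $t$-adic completion is $\X_y$. Let $\mathfrak{q}=\ker(A\to k[[t]])\in\Spec A$; then $\kappa(\mathfrak{q})\hookrightarrow k((t))$ and the generic fibre of $\mathcal{Y}\to\Spec k[[t]]$ is $\mathcal{X}_{\mathfrak{q}}\times_{\kappa(\mathfrak{q})}k((t))$, which is nonempty because $g$ is surjective. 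Thus $\mathcal{Y}_{k((t))}$ is a nonempty finite-type $k((t))$-scheme; choose a closed point of it, with residue field $L$ finite over $k((t))$, and let $R$ be the integral closure of $k[[t]]$ in $L$. Since $k[[t]]$ is an excellent discrete valuation ring, $R$ is a complete discrete valuation ring, finite over $k[[t]]$, with $\Frac R=L$. The maps $\Spec L\to\mathcal{Y}_{k((t))}\hookrightarrow\mathcal{Y}$ and $\Spec R\to\Spec k[[t]]$ agree over $\Spec k[[t]]$, so the valuative criterion of properness for $\mathcal{Y}\to\Spec k[[t]]$ produces a $k[[t]]$-morphism $\Spec R\to\mathcal{Y}$. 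Finally, $R$ is $tR$-adically complete, so applying $t$-adic completion to $\Spec R\to\mathcal{Y}\to\Spec k[[t]]$ yields a morphism of formal schemes $\Spf R\to\widehat{\mathcal{Y}}=\X_y$ over $\Spf k[[t]]$, as desired.

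The step I expect to be the main obstacle is the algebraization of $\X$ in the first paragraph: proper formal schemes need not algebraize in general, but in the applications of this proposition $\X$ is a union of connected components of relative Picard schemes and is projective over $\Spf A$, so Grothendieck's existence theorem applies verbatim; for a general proper $\X$ one would first reduce to the projective case via a formal Chow lemma. One must also check two compatibilities invoked above—that formal completion commutes with the fibre product defining $\X_y$, and that $g_*\O_{\mathcal{X}}$ completes to $f_*\O_{\X}$—but both are standard properties of proper morphisms. (An alternative that avoids algebraizing over $A$ altogether is to use the surjectivity of $f$ and base change to reduce to the case $A=k[[t]]$, and then construct the formal point of $\X_y$ directly over the complete discrete valuation ring $k[[t]]$; the content is the same.)
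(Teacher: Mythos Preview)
Your argument is sound once the algebraization of $\X$ is granted, and you rightly flag that step as the crux. But it is a genuine gap for the proposition as stated: proper formal schemes over a complete Noetherian local ring need not be algebraizable---already over $\Spf k[[t_1,\ldots,t_{g^2}]]$ the universal formal deformation of an abelian variety of dimension $g\geq 2$ is proper but not algebraizable, since algebraization of a formal abelian scheme forces a polarization. Your proposed ``formal Chow lemma'' is not a standard result and I do not know a reference that supplies it in the form you need. The other fix, that in the application $\X=\Pic^\rho_{X/B}$ is formally projective, is more hopeful but still needs work: $\rho$ is only a class in $\NS(X_0)\otimes\Z_p$, so $\Pic^\rho$ is not a component of an actual Picard scheme over $\Spec A$, and to invoke Grothendieck existence you must exhibit a \emph{compatible} system of ample line bundles on the schemes $P_n$ of Remark~\ref{rmk:p-adic-Picard}, which is not automatic from each $P_n$ being individually projective over its Artinian base.

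The paper sidesteps algebraization entirely by passing to the associated adic spaces $\mathfrak{X}\to\mathfrak{S}$. Properness is preserved; Huber's coherence theorem makes $f_*\O_{\mathfrak{X}}$ a coherent $\O_{\mathfrak{S}}$-module, and the scheme-theoretic dominance hypothesis gives $\O_{\mathfrak{S}}\hookrightarrow f_*\O_{\mathfrak{X}}$, so the support of $f_*\O_{\mathfrak{X}}$---which for a proper map is the (closed) image of $f$---is all of $\mathfrak{S}$. From there the argument is exactly yours, transplanted to the adic category: pull the generic point of $\Spa k[[t]]$ back to a point of $\mathfrak{X}$ over some finite extension $K/k((t))$, extend to $\Spa\O_K\to\mathfrak{X}$ by the valuative criterion for proper adic morphisms, and read off the desired $\Spf\O_K\to\X_y$. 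So your overall strategy (surjectivity, then valuative criterion) is the right one; the paper's contribution is to choose a framework in which the objects already live, so that the algebraization step never arises.
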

If everything were a scheme, properness and scheme-theoretic image being all of $\Spec A$, would imply the map $\X\to \Spec A$ was surjective, from which the claim would easily follow. The following proof uses adic spaces to carry this over to formal schemes.
\begin{proof}
  Let $\mathfrak{X}$ be the adic space associated to $\X$ and let $\mathfrak{S}$ be the adic space associated to $\Spec A$. The map $f:\mathfrak{X}\to \mathfrak{S}$ is proper in the sense of adic spaces.

  The module $f_*\O_{\mathfrak{X}}$ is coherent (\cite{Huber}[Remark 1.3.17]) and the map $\O_{\mathfrak{S}}\to f_*\O_{\mathfrak{X}} $ is injective by the scheme-theoretic surjectivity of $f$. In particular the support of this pushforward is all of $\mathfrak{S}$. As the image of $f$ is closed this means that $f$ must be surjective. Thus for any morphism $\Spf k[[t]]\to \Spf A$, there is a map $\Spa k[[t]]\to \mathfrak{S}$ and thus a map $x:\Spa k((t))\to \mathfrak{S}$. Using the surjectiveness of $f$, we may find a finite field extension $K$ of $k((t))$ and a map $y:\Spa K\to \mathfrak{X}$ such that $y$ maps to $x$. Let $\O_K$ be the ring of integers of $K$. By the properness of $f$, we may find a map $\Spa \O_K\to \mathfrak{X}$ over $\Spa k[[t]]\to \mathfrak{S}$. This gives us a map $\Spf \O_K\to \mathfrak{X}$ over $\Spf k[[t]]\to \Spf A$. Taking $R=\mathfrak{O}_K$, we have proved the proposition.
 
\end{proof}

We now give the refinement of Theorem \ref{thm:2}.
\begin{thm}\label{thm:2prime}
Let $k$ be an algebraically closed field of characteristic $p$. Let $X\to S$ be a smooth, projectve morphism of finite type $k$-schemes. Then there exists a dense Zariski open $U\subseteq S$ and an integer $N\geq 0$ such that for each map $x:\Spec k[[t]]\to U$, each $y\in \NS (X_{s_x})\otimes\Z_p$, and each $n\geq N$, if $p^{n}y$ lifts to $X_x$, then $p^N y$ lifts to $X_x$.
\end{thm}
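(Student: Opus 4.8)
The plan is to reduce Theorem~\ref{thm:2prime} to the crystalline Lefschetz $(1,1)$ theorem of Morrow by studying, for a fixed map $x\colon\Spec k[[t]]\to U$, the formal Picard schemes $\Pic^y_{\X_x/k[[t]]}$ constructed in Remark~\ref{rmk:p-adic-Picard}. The key observation is that whether $p^n y$ lifts to $X_x$ should be controlled by whether the formal scheme $\Pic^{p^ny}_{\X_x/k[[t]]}$ has a $k[[t]]$-point (or a point over a finite extension, which suffices after applying a specialization/norm argument), and the obstruction to this, over the closed point and infinitesimally, is a matter of whether a crystalline cohomology class is of $(1,1)$-type, i.e.\ lies in the image of the first Chern class map. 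Morrow's theorem says exactly that a class in $H^2_{\cris}$ lifting to all infinitesimal levels and satisfying the appropriate Hodge-type condition comes from a line bundle up to bounded $p$-torsion — and crucially the bound there is uniform.

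**First I would** set up the spreading-out: by Proposition~\ref{prop:ass1holds} we may shrink $S$ to a dense open $U$ on which Assumption~\ref{ass:1} holds with respect to some fixed integer $m$, and (shrinking further) on which $X\to S$ has a section, the relative $H^2$ of the structure sheaf and the relative de~Rham/crystalline cohomology sheaves are locally free of constant rank, and the Hodge and conjugate spectral sequences degenerate in the relevant range. Then for any $x\colon\Spec k[[t]]\to U$, the family $\X_x\to\Spec k[[t]]$ inherits all of these properties; in particular Proposition~\ref{prop:compatible-padic-bundles} applies, so once we know $y\in\NS(X_{s_x})\otimes\Z_p$ lifts \emph{adically} to $\X_x$ we get that $my$ lifts to $\X_x$. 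Hence it suffices to bound, uniformly in $x$, the power $p^n$ needed to guarantee that ``$p^n y$ lifts to each $X_{x,\ell}$ for all $\ell$'' actually holds whenever ``$p^n y$ lifts to $\X_x$'' — and then absorb the factor $m$ into the final $N$ by taking $p^N\geq m\cdot(\text{that bound})$.

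**The heart of the argument** is the uniform bound at the formal/crystalline level. Using the crystalline cycle class and Morrow's variational crystalline Lefschetz $(1,1)$ theorem for the proper smooth family $\X_x\to\Spec k[[t]]$, a class in $\NS(X_{\ov\eta_x})\otimes\Z_p$ specializing to $y$ exists, up to a bounded power of $p$, precisely when the crystalline class of $y$ over the special fiber lifts compatibly through the tower of thickenings — and Morrow's bound on the $p$-torsion of the cokernel depends only on numerical invariants of the fiber (ranks of crystalline cohomology, the structure of the Hodge filtration, torsion in $H^*_{\cris}$), which are locally constant on $U$ after our shrinking. This gives an integer $N_0$, depending only on $X\to U$, such that if the crystalline class of $p^{N_0}y$ lifts to the generic fiber of $\X_x$ then already $p^{N_0}y$ is in the image of the geometric specialization map, hence $p^{N_0}y$ (and a fortiori $p^{N_0}y$ viewed over $\Spec k[[t]]$ via Proposition~\ref{prop:liftingformal}, which produces the needed point over a finite extension $R/k[[t]]$, abstractly isomorphic to $k[[t]]$) lifts to $\X_x$. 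Combining with the paragraph above and setting $N$ so that $p^N$ is divisible by both $m$ and $p^{N_0}$ yields the theorem.

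**The main obstacle** I anticipate is the bookkeeping needed to see that Morrow's bound is genuinely uniform over the family — i.e.\ that after shrinking $U$ the constant in the crystalline $(1,1)$ theorem applied fiberwise to $\X_x\to\Spec k[[t]]$ can be taken independent of $x$. This requires knowing that the relevant crystalline cohomology groups (and their torsion submodules, Frobenius structures, and Hodge filtrations) of the special fibers $X_{s_x}$ vary in a constructible/locally constant way as $x$ ranges over $U(k)$; this should follow from base-change and semicontinuity for crystalline cohomology together with constancy of Betti/Hodge numbers on the open $U$, but making the reduction to Morrow's setup clean — in particular matching ``lifts to $\X_x$ as a line bundle'' with ``lifts as a crystalline class to the generic point'' via Proposition~\ref{prop:liftingformal} and the $\Pic^y$ formal schemes of Remark~\ref{rmk:p-adic-Picard} — is where the real work lies. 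A secondary subtlety is passing between $\NS\otimes\Z_p$-statements and honest line-bundle statements, but Remarks~\ref{rmk:lifting-p-power} and~\ref{rmk:p-adic-Picard} and Propositions~\ref{prop:lifting-bundles},~\ref{prop:compatible-padic-bundles} are designed precisely to handle that translation.
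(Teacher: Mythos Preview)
Your overall direction---reduce to Morrow's variational Lefschetz $(1,1)$ theorem and then argue that the relevant $p$-torsion bound is uniform after shrinking $S$---is correct, and is the paper's strategy too. But two concrete gaps stand between your sketch and a proof. First, Morrow's Theorem~\ref{thm:vartate} is an \emph{exact} criterion: $\L$ on $X_0$ lifts to $X$ if and only if $c_1(\L)$ is in the image of $H^1_\cont(X,W\Omega^1_{X,\log})\to H^1_\cont(X_0,W\Omega^1_{X_0,\log})$. There is no ``up to bounded $p$-torsion'' built into that statement; the bound you need is on the $p$-power torsion in the \emph{cokernel} of this restriction map, and you have not said how to obtain it. The paper gets it by passing through $H^2_\crys(-)^{\phi=p}$ via Theorem~\ref{thm:logcris} (whose universal constants $p,p^2$ are harmless), then through a fibrewise decomposition of $\Rb f_*\O_X$ into flat crystals with controlled error (Proposition~\ref{prop:fs}), and finally invoking Lemma~\ref{lem:katzlemma}: the torsion in the cokernel of $H^0_\crys(\Spec k[[t]],C)\to C_0$ for a flat $F$-crystal $C$ is bounded by a constant depending only on the \emph{Newton polygon} of $C$. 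Uniformity over $U$ thus comes from constancy of the Newton polygon of $R^2f_*\O_X$, not from Hodge numbers, degeneration of spectral sequences, or torsion in $H^*_\crys$---none of those suffice, and the Hodge-filtration framing you give is the wrong invariant in characteristic $p$.

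Second, the detour through the formal Picard schemes $\Pic^y_{\X_x/k[[t]]}$ and Proposition~\ref{prop:liftingformal} is not needed here and does not by itself produce a bound: knowing that $\Pic^{p^ny}$ has a point over a finite extension is essentially a restatement of ``$p^n y$ lifts,'' not an obstruction calculation. (Those tools enter the paper only later, in the proof of Theorem~\ref{thm:3b}.) What the paper actually does for Theorem~\ref{thm:2prime}, beyond the cokernel bound above, is use Proposition~\ref{prop:surj} to pass from $H^1_\cont(Y,W\Omega^1_{Y,\log})$ to $\varprojlim_m H^1_\cont(Y_m,W\Omega^1_{Y_m,\log})$, then take Chern classes of a compatible system supplied by Proposition~\ref{prop:compatible-padic-bundles}, and conclude via the adic variant of Morrow's theorem. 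Your use of Assumption~\ref{ass:1} and Proposition~\ref{prop:compatible-padic-bundles} is right; what is missing is the Newton-polygon/Katz step that makes the crystalline bound uniform in $x$.
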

This will be proved in Section \ref{sec:proof2}. Theorem \ref{thm:2prime} can also be proved with ``projective'' replaced by ``proper'' deducing the proper case from the projective case via the techniques in Section \ref{sec:projprop}. This is not needed for the present applications.

\begin{rmk}
  In the course of the proof of Theorem \ref{thm:2prime}, it will be seen that $U$ can be taken to be an open subset on which Assumption \ref{ass:1} holds and the Newton polygon of $H^2_\crys$ of the fibers of $X$ over points in $U$ are all the same. 
\end{rmk}

Just as Theorem \ref{thm:2} implies Corollary \ref{cor:2}, we have Theorem \ref{thm:2prime} implies the following.
\begin{cor}\label{cor:2prime}
  Let $k$ be a perfect field of characteristic $p$. Let $C$ be a curve over $k$. Let $\X\to \mathscr{S}$ a proper, smooth morphism of finite type $C$ schemes. Then there an open $U\subseteq \mathscr{S}$ and integer $N$ such that for all $c\in C(k)$ the following holds: for any finite $\widehat{\O_{C,c}}$-algebra, $R$, which is an integral domain, $x\in U_{\widehat{\O_{C,c}}}(R)$, and any element $y\in \NS(\X_{s_{x}})\otimes \Z_p$, and any $n\geq N$, if $p^{n}y$ lifts to $\X_x$, then $p^ny$ lifts to $\X_x$.
\end{cor}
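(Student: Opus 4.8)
The plan is to follow the pattern by which Corollary~\ref{cor:2} is deduced from Theorem~\ref{thm:2}, with two extra ingredients forced by the weaker hypotheses: a faithfully flat descent along $k\hookrightarrow\overline{k}$, to reduce to an algebraically closed base field, and a passage to normalizations, to reduce the coefficient rings to power series rings over $\overline{k}$. To produce $U$ and $N$, base-change $\X\to\mathscr{S}\to C$ along $k\hookrightarrow\overline{k}$ and apply the projective form of Theorem~\ref{thm:2prime}---or, via Section~\ref{sec:projprop}, its proper form---to $\X_{\overline{k}}\to\mathscr{S}_{\overline{k}}$, viewed as a morphism of finite-type $\overline{k}$-schemes, obtaining a dense open $V\subseteq\mathscr{S}_{\overline{k}}$ and an integer $N$. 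Since $\mathscr{S}_{\overline{k}}$ is quasi-compact, $V$ is defined over a finite subextension $k'/k$, which may be taken Galois as $k$ is perfect; replacing $V$ by the intersection of its finitely many $\Gal(k'/k)$-translates only shrinks it and preserves the conclusion of Theorem~\ref{thm:2prime} with the same $N$, so $V$ may be assumed $\Gal(k'/k)$-stable, and it then descends to a dense open $U\subseteq\mathscr{S}$ with $U_{\overline{k}}\subseteq V$.

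For the verification, fix $c\in C(k)$, a finite $\widehat{\O_{C,c}}$-algebra $R$ that is a domain, $x\in U_{\widehat{\O_{C,c}}}(R)$, $y\in\NS(\X_{s_x})\otimes\Z_p$, and $n\ge N$ with $p^{n}y$ lifting to $\X_x$. After restricting to the smooth locus of $C$ (or normalizing $C$), $\widehat{\O_{C,c}}$ is a complete discrete valuation ring, so $R$ is a one-dimensional complete Noetherian local domain, module-finite over it; its normalization $\widetilde{R}$ is then a complete discrete valuation ring, module-finite over $R$, with perfect residue field, hence $\widetilde{R}\cong k_1[[u]]$ for some finite extension $k_1/k$. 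Choosing a $k$-embedding $k_1\hookrightarrow\overline{k}$ gives a local homomorphism $\widetilde{R}\to\overline{k}[[u]]$, and the composite $\Spec\overline{k}[[u]]\to\Spec\widetilde{R}\to\Spec R\xrightarrow{x}U$ is an $\overline{k}[[u]]$-point of $U_{\overline{k}}$ whose closed fibre is the base change of $\X_{s_x}$ to $\overline{k}$. Theorem~\ref{thm:2prime} therefore applies (it holds over $U_{\overline{k}}\subseteq V$): if $p^{n}y$ lifts after this base change then so does $p^{N}y$. Since lifting of classes in $\NS\otimes\Z_p$ passes to base changes---pull back a compatible adic system of line bundles (note $\m_R$ maps into the maximal ideal of $\overline{k}[[u]]$) and algebraize over the complete target---the hypothesis supplies the former, so $p^{N}y$ lifts to $\X_x\otimes_R\overline{k}[[u]]$.

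It remains to descend this lift of $p^{N}y$ back to $\X_x$ over $R$, and this is the step I expect to be the main obstacle. It splits into a faithfully flat (fpqc) descent along $\overline{k}/k$---formal once a $\Gal(\overline{k}/k)$-invariant point of the relevant $\Pic$-torsor over the closed fibre is produced---and a descent along the finite birational morphism $\Spec\widetilde{R}\to\Spec R$. For the latter I would use Ferrand's conductor-square pushout $\Spec R=\Spec\widetilde{R}\sqcup_{\Spec(\widetilde{R}/\c)}\Spec(R/\c)$, which is preserved by the flat morphism $\X\to\mathscr{S}$, so that by Milnor patching a line bundle on $\X_x$ amounts to a line bundle on $\X_x\otimes_R\widetilde{R}$, one on the Artinian thickening $\X_x\otimes_R(R/\c)$, and an identification of their restrictions to $\X_x\otimes_R(\widetilde{R}/\c)$; the delicate point is to modify the lift over $\widetilde{R}$ by its $\Pic^{0}$-part so that such a patching datum exists, for which the facts that $p^{N}y$ is a $p$-power multiple---hence lifts across characteristic-$p$ thickenings automatically (Remark~\ref{rmk:lifting-p-power})---that $U$ may be chosen so that Assumption~\ref{ass:1} holds with its smooth subgroup $G\subseteq\Pic^{\tau}$, and the patching arguments behind Propositions~\ref{prop:lifting-bundles} and~\ref{prop:compatible-padic-bundles}, should together supply enough room.

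Alternatively, one can bypass these descents entirely by rerunning the proof of Theorem~\ref{thm:2prime} with $k[[t]]$ replaced by $R$ throughout: that argument uses the coefficient ring only through Assumption~\ref{ass:1}, the deformation theory of Picard schemes, and Morrow's crystalline Lefschetz $(1,1)$ theorem applied to the closed fibre, none of which requires the coefficient ring to be a power series ring over an algebraically closed field.
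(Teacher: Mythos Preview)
The paper's argument is a single observation you have missed: forget the $C$-structure entirely, regard $\X\to\mathscr{S}$ as a morphism of finite-type $k$-schemes, apply Theorem~\ref{thm:2prime} to obtain $U$ and $N$, and then note that any $R$ as in the statement is abstractly isomorphic to $k[[t]]$ as a $k$-algebra, so that $x:\Spec R\to U$ \emph{is} literally a map $\Spec k[[t]]\to U$ of the kind Theorem~\ref{thm:2prime} treats. This is exactly how Corollary~\ref{cor:2} is deduced from Theorem~\ref{thm:2}, and the paper says only that the same deduction works here---no base change to $\overline{k}$, no normalization, no descent.

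Two remarks. First, the isomorphism $R\cong k[[t]]$ uses that the residue field of $R$ is $k$ and that $R$ is a DVR; the hypothesis ``perfect'' in the corollary is almost certainly a slip for ``algebraically closed'' (matching both Corollary~\ref{cor:2} and Theorem~\ref{thm:2prime}), and likewise one should read $R$ as a finite integral \emph{extension} in the sense of rings of integers. Your elaborate machinery is thus an attempt to prove a stronger statement than the paper intends, not to supply the intended proof. Second, even for that stronger statement your main route has a genuine gap at exactly the point you flag: your Milnor-patching sketch gives no mechanism for producing a gluing datum on the conductor locus, and Galois-descending the \emph{existence} of a lift (as opposed to a specific equivariant lift) from $\overline{k}[[u]]$ back to $R$ is not automatic. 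Your closing alternative---rerunning the proof of Theorem~\ref{thm:2prime} with $R$ in place of $k[[t]]$---is a far more realistic path to the generality you are after, but it is not what the paper does.
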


\begin{rmk}\label{rmk:2prime}
  If $X\to S$ has a section, Theorem \ref{thm:2prime} and Corollary \ref{cor:2prime} both also hold when ``lifts'' is replaced by ``lifts adically''. This follows from Proposition \ref{prop:compatible-padic-bundles}
\end{rmk}

\begin{assumption}\label{ass:2}
  If $k$ is a perfect field of characteristic $p$, we say a smooth, projective morphism $Y\to S$ satisfies Assumption \ref{ass:2} if the conclusion Theorem \ref{thm:2prime} with $U=S$

  If $m$ is an integer, we say $Y\to S$ satisfies Assumption \ref{ass:2} if the  conclusion of Theorem \ref{thm:2prime} holds with $U=S$ and $N=m$.
\end{assumption}

In Assumption \ref{ass:2} there is no finite type hypothesis on $S$. If $Y\to S$ satisfies Assumption \ref{ass:2}, and $S'\to S$ is a $k$-morphism, then the pullback $Y\times_S S'\to S'$ also satisfies Assumption \ref{ass:2}.

\subsection{$\pi$-adic topology}\label{sec:piadic}

Let $K$ be a nonarchimedean valued field and let $\O_K$ be its valuation ring. We have $\A^n(K)\cong K^n$ and we topologize this via the product topology. If $X$ is a finite type affine $K$-scheme, we may topologize $X(K)$ by first embedding $X\hookrightarrow \A^n$ for some $n$, and giving $X(K)\subseteq \A^n(K)$ the subspace topology. This is independent of embedding. Then for a general finite type $K$-scheme $X$, we topologize $X(K)$ by covering $X$ by affine opens and gluing the topology defined on each of these.

Now assume for the remainder of this section that $\O_K$ is a complete discrete valuation ring with uniformizer $\pi \in \O_K$. The algebraic closure $\ov{K}$ is a field with a canonical topology compatible with that on $K$. Let $\O_{\ov{K}}$ be ring of integers of $\ov{K}$, the integral closure of $\O_{K}$ in $\ov{K}$.

Let $\X$ be a finite type, separated $\O_K$ scheme, and let $X=\X\otimes_{\O_K}K$. We note $\X(\O_{\ov{K}})\subseteq X(\ov K)$ is an open subset. Let $L$ be a finite extension of $K$ contained in $\ov K$ and let $x\in \X(\O_L)$ be such that $\X$ is smooth at every point in the image of $x:\Spec \O_L\to \X$. The completion of $\O_{\X\otimes_{\O_K}\O_L}$ along $x$ is isomorphic to the power series ring $\O_L[[t_1,\ldots,t_n]]$ where the $t_i$ functions defined on some open neighborhood $U$ of the section given by $x$ in $\X_{\O_L}$. Denote this completion by $\widehat{\O_{\X_L,x}}$. Now continuous $\O_L$-algebra homoorphisms $\widehat{\O_{\X_L,x}}\to \ov{K}$ correspond to points $z\in U(\ov{K})$ such that the image of $z$ lands in $U$ the value of each $t_i(z)$ is topologically nilpotent. This is an open set of $X(\ov{K})$ which we denote by $V_x$.

 We now describe a convenient basis of the topology on $V_x$. Let $y\in V_x$ with $y_i=t_i(y)\in \O_{\ov{K}}$. Let $L_y$ be field extension of $L$ such that $y\in \X(\O_{L_y})$. Let $N$ be a positive integer. We consider $V_{y,N}=\{z\in V_x:t_i(z)-t_i(y)\in \pi^N\O_K\}$. As $y$ and $N$ vary, these form a basis of $V_x$. Let $s_i$ denote formal indeterminates and consider the continuous $\O_L$-algebra homomorphism $\widehat{\O_{X_L,x}}\to \O_{L_y}[[s_1,\ldots,s_n]]$ sending $t_i$ to $y_i+\pi_i^N s_i$ and including $\O_L\to \O_{L_y}$. The points of $V_{y,N}$ correspond to continuous $\O_L$-algebra homomorphisms $\widehat{\O_{X,x}}\to \ov{K}$ factoring through $\O_{L_y}[[s_1,\ldots,s_n]]$.

\subsection{Crystalline cohomology}

For definitions and proofs see \cite{Berthelot:cris} or \cite{BO:criscohom}. 

We recall some facts about crystalline cohomology of schemes.

Fix $k$ a perfect field of characteristic $p$. Let $W=W(k)$ be its ring of Witt vectors, and let $\phi:W(k)\to W(k)$ be the Frobenius morphism.. We with crystalline cohomology of $k$-schemes always with respect to the coefficient ring $W(k)$.

Let $F_X:X\to X$ denote the absolute Frobenius on $X$; we have a canonically isomorphism $\phi:\Lb F^*\Rb\Gamma \O_X\to \Rb\Gamma\O_X$, which gives maps $\phi:H^i(X/W)\to H^i(X/W)$ which are Frobenius semilinear with respect to $\phi$ on $W(k)$.

For a $k$-scheme $X$, we will consider the abelian category of $\O_X$-modules on the crystalline site, which will be denote $\Mod(\O_X)$. Additionally, we will consider the quotient of this abelian category by the Serre subcategory consisting of objects killed by a power of $p$. This quotient will be denoted $\Mod(\O_X)_\Q$. Let $\Fs$ be an $\O_X$-module; we will denote its image in $\Mod(\O_X)_\Q$ by $\Fs_\Q$. 

We say a map in $f:M\to N$ in $\Mod(\O_X)$ is an isogeny if there is an integer $n\geq 1$ such that the kernel and cokernel of $f$ are killed by multiplication by $n$. We may also describe $\Mod(\O_X)_\Q$ as the category obtained from $\Mod(\O_X)$ by inverting isogenies.

We will denote the derived category of $\Mod(\O_X)$ by $D_{\crys}(X)$, and we will denote the derived category of $\Mod(\O_X)_\Q$ by $D_{\crys}(X_\Q)$. Note there is an obvious triangulated functor $D_{\crys}(X)\to D_{\crys}(X_\Q)$.

Recall element $M$ of $\Mod(\O_X)$ is a crystal if for every map $f:(U,T,\delta)\to (U',T',\delta')$ in the crystalline site of $X$, the natural map $f^*M_{(U',T',\delta')}\to M_{(U,T,\delta)}$ is an isomorphism. We say that $M$ is coherent if each $M_{(U,T,\delta)}$ is a coherent sheaf. We say $M$ is flat if each $M_{(U,T,\delta)}$ is a flat $\O_T$ module.

For any scheme $X$ of characteristic $p$, we may form its de Rham-Witt $F$-$V$-pro-complex $\{W_r\Omega_X^\bullet\}_{r\geq 1}$. For $X$ smooth over a perfect field, the definition is due to Bloch, Deligne and Illusie; see \cite{Illusie:dRW}. For general $X$ the definition is due to Hesselholt and Madsen for $p$ odd and Costeanu for $p=2$ \cite{Costeanu}; see \cite{Hesselholt:bigDRW} for definition and proof of properties. For any $i$, we define $\mathcal{F}:W_{r+1}\Omega_X^{\geq i}\to W_r\Omega_X^{\geq i}$ degree-wise as $p^vF:W_{r+1}\Omega^{i+v}_X\to W_r\Omega^{i+v}$. Similarly, we define $\mathcal{V}:W_r\Omega^{<i}\to W_{r+1}\Omega^{<i}$ degree-wise $p^{i-1-v}V:W_r\Omega_X^{i-1-v}\to W_{r+1} \Omega_X^{i-1-v}$. Additionally, we define $\phi:W_{r+1}\Omega_X^\bullet\to W_r\Omega_X^\bullet$ degree-wise as $p^vF:W_{r+1}\Omega^{v}_X\to W_r\Omega^{v}$. The restriction of $\phi$ to any truncation of the de Rham-Witt complex will still be denoted by $\phi$. As $W_r\Omega_X^\bullet$ satisfies \'{e}tale descent, we will sometimes view it as a sheaf on the \'{e}tale site.

We will also consider the logarithmic subcomplex $W_r\Omega_{X,\log}^\bullet$, which is a subsheaf of $W_r\Omega_{X,\log}^\bullet$ on the \'{e}tale site of $X$ locally generated as an abelian group by elements on the form $d\log[\alpha]$ for $\alpha\in \O_X^\times$, and where $[\alpha]$ denotes the Teichm\"{u}ller lift to $W_r\O_X=W_r\Omega_{X}^0$ and $d\log[\alpha]$ means $(d[\alpha])/\alpha$.

We also recall that for any pro-system of sheaves $\{\Fs_r\}_r$ on a site $C$ the continuous cohomology $\Hb_\cont^i(C,\{\Fs_r\})$ is the $i$th cohomology group of $\Rlim_r \Rb \Gamma_C(\Fs_r)$ where $\Gamma_C$ is the global section functor (see \cite{Jannsen:contCohom}). The notation $\Hb^n_\cont(X,W\Omega_X^\bullet)$ will denote the continuous cohomology with respect to the pro-system $\{W_r\Omega_X^\bullet\}_{r\geq 1}$, and similarly if $W\Omega_X^\bullet$ is replaced with, $W\Omega_X^i$, truncations $W\Omega_X^{\geq i}$ etc. These are the groups are canonically isomorphic whether we consider the cohomology on the Zariski site of $X$ on the \'{e}tale site of $X$. The cohomology of $W_r\Omega_{X,\log}^\bullet$ will always be taken on the \'{e}tale site of $X$, and $H^n(X_\et,W\Omega^i_{X,\log})$ will denote the continuous cohomology of $\{W_r\Omega_{X,\log}^i\}$ when considered as a sheaf on the \'{e}tale site of $X$.

Section 3.1 of \cite{Morrow:vartate} shows that for any regular $k$-scheme $X$ and any $n\geq 0$, there is a canonical isomorphism $H^n_\crys(X/W)\cong \Hb_\cont^n(X,W\Omega_X^\bullet)$. The natural maps $W_r\Omega^i_{X,\log}[-i]\to W_r\Omega^\bullet_X$ induce a map $H^i_\cont(X,W_r\Omega^i_{X,\log})\to \Hb^{2i}_\cont(X, W_r\Omega_X)\cong H^{2i}_\crys(X/W(k))$. If $\L$ is a line bundle on $X$, there is a Chern class $c_1(\L)\in H^1_\cont(X,W\Omega_{X,\log})$. This maps to the usual crystalline Chern class in $H^2_\crys(X/W)$.

Now let $A$ be a complete, noetherian, local ring. Assume that $A$ is of characteristic $p$ and is $F$-finite, which means that $A$ is finite over its subalgebra of $p$th powers. Let $X$ be an $A$-scheme. By functoriality, we get a map $\Rb \Gamma W_r\Omega^\bullet_{X,\log}\to \Rb \Gamma W_r\Omega^\bullet_{X_n,\log}$. Taking inverse limits over $r$ gives $\Rlim_r\Rb \Gamma W_r\Omega_{X,\log}^\bullet\to \Rlim_r\Rb \Gamma W_r\Omega^\bullet_{X_n,\log}$, then $\Rlim_r\Rb \Gamma W_r\Omega^\bullet_{X,\log}\to \Rlim_n \Rlim_r\Rb \Gamma W_r\Omega^\bullet_{X_n,\log}$. Taking cohomology, gives the first map in the following  composition \[H^i_\cont (X,W\Omega_{X,\log})\to H^i(\Rlim_n \Rlim_r\Rb \Gamma W_r\Omega_{X_n,\log})\to \varprojlim_n H^i_\cont(X_n, W\Omega_{X_n,\log}).\]

\begin{prop}\label{prop:surj}
  With $A$ and $X$ as above and $X\to \Spec A$ proper, for all $i$ and $j$ the maps $H^i_\cont (X,W\Omega^j_{X,\log})\to \varprojlim_n H^i_\cont(X_n, W\Omega_{X_n,\log}^j)$ are surjective.
\end{prop}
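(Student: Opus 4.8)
The plan is to reduce the claim to a standard finiteness/completeness statement about continuous cohomology of pro-systems of sheaves on a proper scheme over a complete local ring, and then to deploy a Mittag–Leffler / $\Rlim$-vanishing argument. Concretely, writing $Z = \Rlim_r \Rb\Gamma W_r\Omega^j_{X,\log}$ and $Z_n = \Rlim_r \Rb\Gamma W_r\Omega^j_{X_n,\log}$, we have a map of complexes $Z \to \Rlim_n Z_n$ whose effect on $H^i$ is the composite displayed just before the proposition; the target $\varprojlim_n H^i_\cont(X_n, W\Omega^j_{X_n,\log})$ receives an edge map from $H^i(\Rlim_n Z_n)$ with kernel/cokernel controlled by the $\varprojlim^1$ over $n$ of the groups $H^{i-1}_\cont(X_n,\cdots)$. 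So there are really two things to check: first, that $H^i(Z) \to H^i(\Rlim_n Z_n)$ is surjective, and second, that $H^i(\Rlim_n Z_n) \to \varprojlim_n H^i_\cont(X_n,W\Omega^j_{X_n,\log})$ is surjective.

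**First step.** I would identify $\Rlim_n Z_n$ with $\Rb\Gamma$ of the formal completion. Since $X \to \Spec A$ is proper and $A$ is complete local with $\m$-adic topology, $\widehat{X} = \varinjlim X_n$ is a proper formal scheme over $\Spf A$, and the logarithmic de Rham–Witt pro-complexes are compatible with the transition maps $X_n \hookrightarrow X_{n+1}$. The key input here is a theorem-on-formal-functions type statement: the natural map $\Rb\Gamma(X, W_r\Omega^j_{X,\log}) \to \Rlim_n \Rb\Gamma(X_n, W_r\Omega^j_{X_n,\log})$ should be an isomorphism (or at least induce isomorphisms on cohomology), because $W_r\Omega^j_{X,\log}$ is, étale-locally, built out of units and hence its cohomology sheaves and higher direct images along $X \to \Spec A$ are coherent enough — more precisely, one uses that $W_r\Omega^j_{X,\log}$ sits in exact sequences (the Artin–Schreier–type sequences $0 \to W_r\Omega^j_{X,\log} \to W_r\Omega^j_X \xrightarrow{1-F} W_r\Omega^j_X/dV^{r-1}\Omega^{j-1} \to 0$) relating it to coherent de Rham–Witt sheaves, for which the formal functions theorem applies verbatim. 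Taking the double limit over $r$ and $n$ and commuting the two $\Rlim$'s (which is automatic) then gives $Z \xrightarrow{\sim} \Rlim_n Z_n$ at the level of complexes, so in particular $H^i(Z) \xrightarrow{\sim} H^i(\Rlim_n Z_n)$, settling the first of the two surjectivity questions.

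**Second step.** It remains to show the edge map $H^i(\Rlim_n Z_n) \to \varprojlim_n H^i(Z_n)$ is surjective, equivalently that $\varprojlim^1_n H^{i-1}(Z_n) = 0$. Here I would invoke finiteness: for each fixed $n$, $X_n$ is proper over the artinian local ring $A_n = A/\m^{n+1}$, and (by Morrow's results recalled in the excerpt, or by the coherence of the relevant de Rham–Witt cohomology groups together with the fact that continuous cohomology of $\{W_r\Omega^j_{X_n,\log}\}$ fits into a short exact sequence with a $\varprojlim$ and $\varprojlim^1$ of finitely generated $W_r(A_n)$-modules that are eventually stationary in an appropriate sense) the groups $H^{i-1}_\cont(X_n, W\Omega^j_{X_n,\log})$ form a pro-system whose transition maps are, up to bounded $p$-torsion, surjective — in fact the precise statement I expect to use is that the pro-system $\{H^{i-1}(Z_n)\}_n$ satisfies Mittag–Leffler because the transition map $Z_{n+1} \to Z_n$ is induced by the closed immersion $X_n \hookrightarrow X_{n+1}$ which is a nilpotent thickening, so on logarithmic de Rham–Witt cohomology (which is insensitive to nilpotents in a suitable derived sense, as these are étale sheaves and $X_n, X_{n+1}$ have the same underlying space) the maps are close to isomorphisms. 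Granting Mittag–Leffler, $\varprojlim^1_n H^{i-1}(Z_n) = 0$ and the edge map is surjective; combined with the first step this gives the surjectivity claimed in the proposition.

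**The main obstacle** is the first step: making rigorous the ``theorem on formal functions'' for the non-coherent, non-quasi-coherent sheaves $W_r\Omega^j_{X,\log}$ (they live on the étale site and are not $\O_X$-modules), i.e. commuting $\Rb\Gamma(X,-)$ past the inverse limit over the infinitesimal neighborhoods. The resolution is to not work with the log complexes directly but to pass through the coherent de Rham–Witt sheaves $W_r\Omega^\bullet_X$ (for which Grothendieck's formal functions theorem and the comparison $H^n_\crys \cong \Hb^n_\cont(X,W\Omega^\bullet_X)$ from Morrow's Section 3.1 are available) and then use the compatibility of the logarithmic truncation maps $W_r\Omega^j_{X,\log}[-j] \to W_r\Omega^\bullet_X$ with base change to $X_n$; a short diagram chase, plus the known surjectivity properties of the Chern class map into crystalline cohomology, then transfers the surjectivity from the coherent setting to the logarithmic one.
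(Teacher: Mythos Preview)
Your first step is exactly the paper's argument: use the exact sequence of pro-sheaves $0 \to \{W_r\Omega^j_{Y,\log}\}_r \to \{W_r\Omega^j_Y\}_r \xrightarrow{1-\mathcal{F}} \{W_r\Omega^j_Y\}_r \to 0$ (this is Morrow, \cite{Morrow:KlogHW} Corollary 4.1) to get a map of distinguished triangles, observe that the two ``coherent'' vertical arrows are isomorphisms by the formal functions theorem (the paper cites formula (18) of \cite{Morrow:KlogHW}), and conclude that $Z \to \Rlim_n Z_n$ is an isomorphism.

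Your second step, however, contains a genuine confusion. You write that surjectivity of the edge map $H^i(\Rlim_n Z_n) \to \varprojlim_n H^i(Z_n)$ is ``equivalently that $\varprojlim^1_n H^{i-1}(Z_n) = 0$.'' This is backwards. The Milnor short exact sequence (which you correctly set up at the outset, and which the paper derives from the composition-of-functors spectral sequence together with $\varprojlim^t = 0$ for $t \ge 2$ in abelian groups) reads
\[
0 \to \sideset{}{^{1}}\varprojlim_n H^{i-1}(Z_n) \to H^i(\Rlim_n Z_n) \to \varprojlim_n H^i(Z_n) \to 0,
\]
so the edge map is \emph{always} surjective; the $\varprojlim^1$ term is its kernel, not an obstruction to surjectivity. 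Consequently, once your first step is done the proposition is immediate, and your entire Mittag--Leffler discussion is unnecessary. (It is also on shaky ground: the claim that logarithmic de Rham--Witt cohomology is ``insensitive to nilpotents'' because $X_n \hookrightarrow X_{n+1}$ is a nilpotent thickening is not correct as stated --- the \'etale topoi agree, but the sheaves $W_r\Omega^j_{X_n,\log}$ genuinely depend on $n$.) Drop the second step entirely and you have precisely the paper's proof.
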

\begin{proof}
  By \cite{Morrow:KlogHW} Corollary 4.1, for any scheme $Y$ in characteristic $p$, we have an exact sequence of pro-\'{e}tale sheaves \[0\to \{W_r\Omega_{Y,\log}^j\}_r\to \{W_r\Omega_{Y}^j\}_r\xrightarrow{1-\mathcal{F}} \{W_r\Omega_{Y}^j\}_r\to 0\]

  Functoriality of all terms involved then gives us morphisms of distinguished triangles
{\small
  \begin{center}
    \begin{tikzcd}
      \Rlim_r \Rb\Gamma (X,W_r\Omega^j_{X,\log})\arrow[r]\arrow[d] &\Rlim_r \Rb\Gamma (X,W_r\Omega^j_{X})\arrow[r,"1-F"]\arrow[d]&\Rlim_r \Rb\Gamma (X,W_r\Omega^j_{X})\arrow[d]\rightarrow \\
      \Rlim_n\Rlim_r \Rb\Gamma (X_n,W_r\Omega^j_{X_n,\log})\arrow[r] &\Rlim_n\Rlim_r \Rb\Gamma (X_n,W_r\Omega^j_{X_n})\arrow[r,"1-F"]&\Rlim_n\Rlim_r \Rb\Gamma (X_n,W_r\Omega^j_{X_n})\rightarrow
    \end{tikzcd}
  \end{center}
}
  The second and third vertical arrows are induced by the maps $\Rb\Gamma (X,W_r\Omega_{X}^j)\to \Rlim_n  \Rb\Gamma (X_n,W_r\Omega_{X_n}^j)$. That this map is an isomorphism follows from formula (18) in \cite{Morrow:KlogHW}, which in turn comes from an application of the formal function theorem. Thus the above map of distinguished triangles is an isomorphism, and therefore the first vertical arrow is an isomorphism.

  Using the composition of functors spectral sequence and the fact that in the category of abelian groups $\varprojlim^t=0$ for $t\geq 2$, we get short exact sequences for each $i$: \[0\to \sideset{}{^{1}}\varprojlim_n H^{i-1}_\cont (X,W\Omega^j_{X,\log})\to H^i_\cont(X,W\Omega^j_{X,\log})\to \varprojlim_n H^i_\cont(X_n,W\Omega_{X_n,\log}^j)\to 0,\] where the last arrow is the one induced by functoriality. This finishes the proof.
  
\end{proof}

\begin{rmk}\label{rmk:crystals}
  Let $S$ be a smooth, irreducible, affine finite type $k$-scheme or $S=\Spec k[[t]]$. We may find a $p$-adically complete,  formally smooth, affine formal scheme $\wt{S}$ whose reduction mod $p$ is $S$ (in the case when $S=\Spec k[[t]]$, we take $\wt{S}=\Spf W(k)[[t]]$). Let $B$ be such that $\wt S=\Spf B$. Let $D$ be the $p$-adically complete formal scheme obtained from $\wt{S}$ by taking the $p$-adic completion of the PD envelope of the diagonal embedding of $S$ in $\wt{S}\times_{W(k)} \wt{S}$. We get projections $\pi_i:D\to \wt{S}$ for $i=1,2$. Giving a crystal, $\C$, on $S$ is equivalent to giving a quasi-coherent sheaf $\mathscr{E}$ on $\wt{S}$ and an isomorphism $\pi_1^*\E\cong \pi_2^*\E$ satisfying a cocycle condition. The crystal is flat if and only if $\E$ is a a flat sheaf on $\wt{S}$ by \cite{BO:criscohom} Theorem 6.6.
\end{rmk}

\begin{rmk}\label{rmk:dualcrystals}
  Let notation be as in Remark \ref{rmk:crystals} and let $C$ be a crystal on $S$ with value $\E$ on $\wt{S}$. We denote $C^\vee$ the following crystal: the value $\wt{S}(\C^\vee)=\Hom(\E,\O_{\wt{S}})$ and the map $\pi_1^*\Hom(\E,\O_{\wt{S}})\to \pi_2^*\Hom(\E,\O_{\wt{S}})$ is that induced by the map $\pi_1^*\E\to \pi_2^* \E$ coming from the fact that $\C$ is a crystal via Remark \ref{rmk:crystals}.

  If $C$ is a flat, coherent crystal, then for any element $(U,V,\delta)$ of the crystalline site of $S$, $C^\vee(U,V,\delta)=\Hom_{\O_V}(C(U,V,\delta),\O_V)$.
\end{rmk}

\begin{lem}\label{lem:flatonopen}
  Let $S$ be a smooth, finite type $k$-scheme. For any coherent crystal $C$ on $S$, there is a dense open $U\subseteq S$ and a flat coherent crystal $C'$ on $U$ such that $C|_U$ and $C'$ are isogenous.
\end{lem}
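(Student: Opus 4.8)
The plan is to reduce to the affine case and then use the explicit description of crystals from Remark \ref{rmk:crystals}. First I would cover $S$ by finitely many affine opens; since the statement only asks for a dense open $U$, it suffices to produce such a $U$ on a single dense affine open $S_0 \subseteq S$, and after shrinking I may assume $S_0$ is also smooth, irreducible, and affine so that Remark \ref{rmk:crystals} applies. Let $\wt{S_0} = \Spf B$ be a $p$-adically complete formally smooth lift of $S_0$, and let $D$ be the $p$-adic PD-envelope of the diagonal with its two projections $\pi_1, \pi_2 \colon D \to \wt{S_0}$. By Remark \ref{rmk:crystals}, the coherent crystal $C|_{S_0}$ corresponds to a coherent sheaf $\E$ on $\wt{S_0}$ together with a descent isomorphism $\varepsilon \colon \pi_1^*\E \ito \pi_2^*\E$ satisfying the cocycle condition; $C$ is flat precisely when $\E$ is flat.

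Next I would produce, after shrinking, a flat coherent sheaf on the lift that is isogenous to $\E$ compatibly with the descent data. Since $B$ is noetherian, $\E = \wt{M}$ for a finite $B$-module $M$. The module $M$ need not be flat, but $M/M_{\mathrm{tors}}$ is a finite $B$-module which is torsion-free; it is generically free, so there is a nonzero $g \in B$ such that $M_g := M[g^{-1}]$ is a free, hence flat, $B_g$-module. Reducing $g$ mod $p$ gives a nonzero element $\bar g$ of the noetherian ring $B/p$ (which is not a zero divisor in $B/p$ since $B$ is $p$-torsion-free and... more carefully: choose $g$ a non-zero-divisor on $B$ whose image in $B/p$ is a non-zero-divisor), and I set $U = S_0 \setminus V(\bar g)$, a dense open of $S$. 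The localization $\wt{U} := \Spf \widehat{(B_g)}$ (the $p$-adic completion) is then a formally smooth lift of $U$, and $M_g$ gives a finite flat module $M'$ over it. The quotient map $M \to M/M_{\mathrm{tors}}$ has kernel $M_{\mathrm{tors}}$, which is killed by some integer $n$ since $M_{\mathrm{tors}}$ is a finite module over the domain $B$; thus $M \to M/M_{\mathrm{tors}} \hookrightarrow M_g$ is an isogeny of $B_g$-modules (kernel killed by $n$, cokernel a torsion $B_g$-module that is finitely generated, hence killed by some integer after further localization — so I shrink $U$ once more to arrange the cokernel is zero on $U$, or equivalently take $n$ to kill both kernel and cokernel after clearing denominators).

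Finally I would transport the descent datum. The isomorphism $\varepsilon$ on $D$ induces a map on $\pi_i^*(M/M_{\mathrm{tors}})$ because torsion goes to torsion under any $B$-linear-up-to-base-change map and the PD-envelope is flat over $B$ via each $\pi_i$ (both projections are formally smooth, being base changes of the smooth lift), so $\pi_i^* M_{\mathrm{tors}}$ is the torsion submodule of $\pi_i^* M$ and is preserved by $\varepsilon$; hence $\varepsilon$ descends to an isomorphism $\pi_1^* M' \ito \pi_2^* M'$ over the corresponding PD-envelope $D_U$ of $\wt{U}$, and the cocycle condition is inherited. This produces a flat coherent crystal $C'$ on $U$, and the isogeny $M|_U \to M'$ respecting the descent data gives an isogeny $C|_U \to C'$ in $\Mod(\O_U)$, i.e. an isomorphism in $\Mod(\O_U)_\Q$.

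The main obstacle is the bookkeeping in the last paragraph: checking that the generic flattening $g$ chosen on $\wt{S_0}$ can be taken so that its reduction mod $p$ cuts out a \emph{dense} open of $S_0$ (one must ensure $g$ is not a unit times a power of $p$, i.e. that its image in $B/p$ is nonzero and a non-zero-divisor — this uses that $B$ is $p$-torsion-free and noetherian, so $B/p$ is noetherian and we may choose $g$ avoiding the finitely many minimal primes of $B/p$), and that the descent isomorphism genuinely preserves the torsion submodule after the necessary base changes. Everything else is standard generic-flatness and the faithfulness of the equivalence in Remark \ref{rmk:crystals}.
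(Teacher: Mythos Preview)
Your overall shape is right and matches the paper's argument closely: pass to an irreducible affine, use the equivalence of Remark~\ref{rmk:crystals}, kill torsion in the $B$-module $M$ underlying $\E$, and check the descent datum descends using flatness of the projections $\pi_i$ (this flatness is \cite{BO:criscohom}~Corollary~3.35, not a base-change argument as you wrote, but that is minor). However, there is a genuine gap in the step where you pass to $M/M_{\mathrm{tors}}$.

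You take $M_{\mathrm{tors}}$ to be the $B$-torsion submodule and assert it is killed by some \emph{integer} $n$ ``since $M_{\mathrm{tors}}$ is a finite module over the domain $B$''. That deduction is false: finiteness over the domain $B$ only gives a nonzero $b\in B$ with $b\,M_{\mathrm{tors}}=0$, not an integer. For instance, if $S=\A^1_k$ with coordinate $t$ and $M=B/(t)$, then $M=M_{\mathrm{tors}}$ is killed by $t$ but by no power of $p$. Thus $M\to M/M_{\mathrm{tors}}$ need not be an isogeny in the sense of the paper (kernel and cokernel killed by an integer), and your $C'$ need not be isogenous to $C|_U$. The paper instead kills only the $p$-power torsion $T\subseteq M$; noetherianity of $B$ gives $p^nT=0$, so $C\to C'$ \emph{is} an isogeny. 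A second, related issue: your generic-freeness argument produces flatness at the generic point of $\Spec B$, but what you actually need is flatness at the generic point of $V(p)=S$, a height-one point. Your attempt to arrange $\bar g\neq 0$ in $B/p$ by ``avoiding the minimal primes of $B/p$'' is circular: that is possible precisely when the flat locus already contains the generic point of $V(p)$. The missing input is that $B$ is regular, so $B_{(p)}$ is a DVR with uniformizer $p$, and a $p$-torsion-free finite module is automatically free there; this is exactly how the paper gets density of $U$. Once you replace $M_{\mathrm{tors}}$ by the $p$-power torsion and insert the DVR observation, your argument becomes the paper's.
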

  \begin{proof}
    We assume that $S$ is irreducible and affine. We then take notation as in Remark \ref{rmk:crystals} including setting $\E=\wt{S}$.

  Taking the value of $C$ at $\wt{S}$, we get a coherent sheaf $\E$ on $\wt{S}$. Let $T\subseteq \E$ be the subsheaf of elements killed by a power of $p$. As $\wt{S}$ is noetherian and our crystal is coherent, there is an integer  $n$ such that $p^nT=0$. Now $\E/T$ is $p$-torsion free. Let $\E'=\E/T$. As the  maps $\pi_i:D\to\wt{S}$ are flat by \cite{BO:criscohom} Corollary 3.35, $\pi_i^*T$ is the subsheaf of $\pi_i^*\E$ of elements killed by a power of $p$, and $\pi_i^*(\E/T)=\pi^*\E/\pi^*T$ is $p$-torsion free. The map $\pi_1^*\E\cong \pi_2^*\E$ then induces an isomorphism $\pi_1^*(\E/T)\cong \pi_2^*(\E/T)$ such that the cocycle condition still holds. Thus by the equivalence described in the first paragraph, we get crystal $C'$ whose value on $\wt S$ is $p$-torsion free and an isogeny $C\to C'$.

  Let $M$ be the $B$-module corresponding to the coherent sheaf $\E'$. By abuse of notation, we also refer to the coherent sheaf on $\Spec B$ corresponding to $M$ as $\E'$. As $B$ is regular, the local ring at the ideal $(p)$ is a discrete valuation ring with uniformizer $p$. As $M$ is $p$-torsion free, the localization $M_{(p)}$ is a flat $B_{(p)}$-module. Thus, over some open subset of $\Spec B$ containing the generic point of $S=\Spec B/p\subseteq \Spec B$ $M$ is flat. Let $U$ be the the restriction of this open to $S$. It follows that $\E'|_U$ is flat. Thus $C'|_U$ is a flat crystal on $U$ isogenous to $C|_U$ and $C'|_U$.
\end{proof}

We make essential use of the following result:
\begin{thm}[\cite{Morrow:KlogHW} Theorem 3.4]\label{thm:vartate}
  Let $X$ be a smooth scheme over $\Spec k[[t]]$. Let $X_0=X\otimes_{k[[t]]}k$. Let $\L$ be a line bundle on $X_0$, and let $c=c_1(\L)$ be its first Chern class in $H^1_\cont(X_0,W\Omega^1_{X_0,\log})$. Then if $\L$ lifts to $X$ if and only if $c$ is in the image of the map \[H^1_\cont(X,W\Omega^1_{X,\log})\to H^1_\cont(X_0,W\Omega^1_{X_0,\log})\]
\end{thm}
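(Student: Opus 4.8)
The plan is to separate the two implications; the forward direction is formal and the reverse one carries all the content. For ``only if'', a line bundle $\L'$ on $X$ with $\L'|_{X_0}\cong\L$ has crystalline logarithmic Chern class $c_1(\L')\in H^1_\cont(X,W\Omega^1_{X,\log})$, which by functoriality restricts to $c_1(\L)=c$; so $c$ lies in the image. For ``if'', write $X_n=X\otimes_{k[[t]]}k[t]/(t^{n+1})$. When $X$ is proper over $k[[t]]$ --- the case relevant to the applications, and the one where the algebraization step is clean --- Grothendieck's existence theorem identifies line bundles on $X$ restricting to $\L$ with compatible systems of line bundles $\L_n$ on the $X_n$ restricting to $\L$; so, given $\tilde c\in H^1_\cont(X,W\Omega^1_{X,\log})$ mapping to $c$, the goal is to produce such a system. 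Pushing $\tilde c$ forward along the functorial maps $H^1_\cont(X,-)\to H^1_\cont(X_n,-)$ already gives compatible classes $c_n\in H^1_\cont(X_n,W\Omega^1_{X_n,\log})$ with $c_n|_{X_0}=c=c_1(\L)$, so the whole problem is to promote the $c_n$ to line bundles. Conceptually this amounts to showing that the square comparing $c_1\colon\Pic\to H^1_\cont(-,W\Omega^1_{-,\log})$ for $X$ and for $X_0$ is ``Cartesian up to controlled error''.

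The cohomological tools I would bring to bear are twofold. First, the short exact sequence of pro-\'etale sheaves $0\to\{W_r\Omega^1_{-,\log}\}_r\to\{W_r\Omega^1_{-}\}_r\xrightarrow{1-\mathcal F}\{W_r\Omega^1_{-}\}_r\to 0$ together with the formal-function comparison $\Rlim_r\Rb\Gamma(X,W_r\Omega^1_X)\xrightarrow{\sim}\Rlim_n\Rlim_r\Rb\Gamma(X_n,W_r\Omega^1_{X_n})$; this is exactly the machinery behind Proposition \ref{prop:surj}, and it controls $H^\ast_\cont(X,W\Omega^1_{X,\log})$ in terms of the system $\{H^\ast_\cont(X_n,W\Omega^1_{X_n,\log})\}_n$, and the latter in terms of the cohomology of $\O_{X_0}$. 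Second, the $d\log$ (Kummer-type) map out of $\mathbb{G}_m$ comparing $\Pic$ with $H^1$ of the logarithmic de Rham--Witt sheaf: on a reduced smooth base this is essentially $0\to\mathbb{G}_m\xrightarrow{p^r}\mathbb{G}_m\to W_r\Omega^1_{\log}\to 0$, but on the non-reduced $X_n$ it must be organized along the filtration of $\mathbb{G}_{m,X_n}$ by the subsheaves $1+t^j\O_{X_n}$, whose successive quotients are the additive sheaves $\O_{X_0}$. The crystalline backdrop for all of this is Morrow's comparison $H^n_\crys(Y/W)\cong\Hb^n_\cont(Y,W\Omega^\bullet_Y)$, under which $c$ is a ``$(1,1)$-class''.

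The heart of the matter, and what I expect to be the main obstacle, is the inductive reconciliation of the cohomological lift with a coherent one: given $\L_{n-1}$ on $X_{n-1}$ with $c_1(\L_{n-1})=c_{n-1}$, extend it to $\L_n$ on $X_n$ with $c_1(\L_n)=c_n$. The obstruction to extending $\L_{n-1}$ merely as a line bundle lies in $H^2(X_0,\O_{X_0})$ and is additive; the extensions, when they exist, form a torsor under $H^1(X_0,\O_{X_0})$; and the key point is that the ``error terms'' of the two vertical maps $\Pic(X_n)\to\Pic(X_{n-1})$ and $H^1_\cont(X_n,W\Omega^1_{X_n,\log})\to H^1_\cont(X_{n-1},W\Omega^1_{X_{n-1},\log})$ --- both built out of the cohomology of $\O_{X_0}$ via the filtrations above --- match up under $c_1$, so that prescribing the Chern class to be $c_n$ pins down and kills the obstruction. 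Here one uses crucially that $c_n$ already restricts to the \emph{honest} class $c_1(\L)$ on $X_0$, so that the cokernel of $c_1$ (a $p$-primary Brauer contribution) cannot interfere; one must also absorb the $\Pic^0$-ambiguity in the choice of $\L_n$, which is where the smoothness of $\Pic^0_{\mathrm{red}}$ and the structure of $\Pic^\tau$ (as in Assumption \ref{ass:1}) enter. Carrying out this bookkeeping so that the resulting $\L_n$ are simultaneously compatible, have Chern classes $c_n$, and restrict to $\L$ on $X_0$ is the delicate step; the remaining verifications --- functoriality of $c_1$, vanishing of the pertinent $\varprojlim^1$-terms, and Grothendieck existence --- are routine.
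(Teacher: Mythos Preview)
This theorem is not proved in the paper: it is quoted verbatim from Morrow \cite{Morrow:KlogHW}*{Theorem 3.4} (with earlier versions in \cite{Morrow:vartate} and \cite{LazdaPal:11}), and the paper uses it as a black box. There is therefore no ``paper's own proof'' to compare against.

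A few remarks on your sketch nonetheless. The ``only if'' direction is indeed formal, as you say. For the ``if'' direction, your outline is a plausible strategy and identifies the right ingredients (the exact sequence $0\to\{W_r\Omega^1_{\log}\}\to\{W_r\Omega^1\}\xrightarrow{1-\mathcal F}\{W_r\Omega^1\}\to 0$, the formal-function comparison, the $d\log$ map), but two things should be flagged. First, you explicitly restrict to the proper case and then invoke Grothendieck existence; the theorem as stated (and as proved by Morrow) does not assume properness, so this is already a narrowing. Second, and more telling, you bring in the structure of $\Pic^\tau$ and Assumption~\ref{ass:1} to ``absorb the $\Pic^0$-ambiguity''. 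Morrow's theorem requires no such hypothesis on the Picard scheme, so if your argument genuinely needs it, that is a sign the bookkeeping you describe as ``the delicate step'' does not close as written. Morrow's actual argument proceeds more directly through the de Rham--Witt formalism and does not pass through an inductive obstruction-theoretic lift of line bundles in the way you propose; the point is rather that the logarithmic Hodge--Witt class already \emph{is} the datum of a compatible system of lifts, via the identification of $H^1_\cont(-,W\Omega^1_{\log})$ with a suitable limit. Since you yourself describe the core step as ``delicate'' and do not carry it out, the proposal should be regarded as an incomplete sketch rather than a proof.
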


A version of this result was first proved in \cite{Morrow:vartate} and reproved in \cite{LazdaPal:11}. For the current application, we prefer the statement found \cite{Morrow:KlogHW}.

\subsection{Modules with $F$-structure}\label{sec:Fstruct}

For this section, let $k$ be a perfect field of characteristic $p>0$ and let $X$ and $Y$ be $k$-schemes. The purpose of this section is to explain how, if $f:X\to Y$ is a $k$-morphism, $\Rb f_{*}\O_X\in D(\O_Y)_{\crys}$ can be represented by a complex of sheaves each with an action of Frobenius.

Let $F:X\to X$ denote the absolute Frobenius on $X$.

\begin{defi}
  Given an element $M$ of $\Mod(\O_X)_\crys$, an $F$ structure on $M$ is a map $F^{-1}M\to M$ compatible with the map $F^{-1}\O_X\to \O_X$.
\end{defi}

In general, let $T$ be a topos (by which we mean the category of sheaves on a site). Let $F:T\to T$ be a map of topoi. Then we may form the category $T_F$ whose objects are pairs $(t,\phi)$ such that $t\in T$ and $\phi:F^{-1}t\to t$. Morphisms from $(t,\phi)$ to $(t',\phi')$ are maps $\psi:t\to t'$ in $T$ such that the following diagram commutes
\[
  \begin{tikzcd}
    F^{-1}t\arrow[r,"\phi"]\arrow[d,"F^{-1}\psi"]& t\arrow[d,"\psi"]\\
    F^{-1}t'\arrow[r,"\phi'"]&t'.
  \end{tikzcd}
\]

\begin{prop}
  If $T$ is a topos with self-map $F:T\to T$, then $T_F$ is also a topos.
\end{prop}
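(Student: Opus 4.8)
The plan is to verify the Grothendieck–Giraud axioms for a topos (a category with small colimits, a set of generators, and exact filtered colimits / effective equivalence relations), or equivalently to exhibit $T_F$ as the category of sheaves on a site, by transporting structure from $T$. First I would observe that $T_F$ is nothing but the category of "modules" over the monad (or rather, the category of algebras for the comonad) $F^{-1}\colon T\to T$ in the $2$-categorical sense; more concretely, $T_F$ is the lax equalizer, or descent category, of the pair $(\mathrm{id},F^{-1})\colon T\rightrightarrows T$. Since $F^{-1}$ is the inverse-image part of a geometric morphism, it is left exact and has a right adjoint $F_{*}$. The key structural input is therefore: \emph{the category of coalgebras for a left-exact comonad on a topos, whose underlying functor preserves colimits, is again a topos.} This is a standard fact (it is, for instance, how one shows that $B\mathbb{G}$-equivariant sheaves or the classifying topos of a monoid action form a topos), so I would either cite it or reprove it in our special case.

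The key steps, in order, would be: (1) Record that the forgetful functor $U\colon T_F\to T$, $(t,\phi)\mapsto t$, creates all small limits and colimits: given a diagram in $T_F$, form the (co)limit of the underlying objects in $T$, and use that $F^{-1}$ preserves colimits (being a left adjoint) and finite limits (being the inverse image of a geometric morphism) to equip it canonically with an $F$-structure, which one checks is the (co)limit in $T_F$. In particular $T_F$ is complete and cocomplete. (2) Produce a generating set: if $\{G_i\}$ is a set of generators of $T$, then the "free" objects $(F^{-1}G_i \sqcup \text{something})$ — precisely, the left adjoint to $U$ applied to the $G_i$, which exists because $U$ has a left adjoint $L$ sending $t$ to the colimit of $t \leftarrow F^{-1}t \leftarrow F^{-2}t \cdots$ with its tautological $F$-structure (using again that $F^{-1}$ preserves this colimit) — form a generating set of $T_F$, since $U$ is faithful and has a left adjoint. (3) Check that filtered colimits commute with finite limits in $T_F$ and that equivalence relations are effective: both follow immediately from step (1) together with the corresponding properties in $T$, since $U$ creates these (co)limits and reflects isomorphisms. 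Then Giraud's theorem applies and $T_F$ is a topos. Alternatively, in step (2)–(3) one can instead directly write down a site: take a site $(\mathcal{C},J)$ presenting $T$ and build the "twisted arrow" site whose objects are objects $c$ of $\mathcal{C}$ together with a specified map into $F^{-1}c$, with the induced topology; sheaves on it are exactly $T_F$.

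I expect the main obstacle to be purely bookkeeping: carefully constructing the left adjoint $L$ to the forgetful functor $U$ and verifying it lands in $T_F$. The subtlety is that $L(t) = \operatorname{colim}\bigl(t \leftarrow F^{-1}t \leftarrow (F^{-1})^2 t \leftarrow \cdots\bigr)$ carries an $F$-structure only because $F^{-1}$, being a left adjoint, commutes with this countable filtered (in fact sequential) colimit, so that $F^{-1}L(t) \cong \operatorname{colim}\bigl(F^{-1}t \leftarrow (F^{-1})^2 t \leftarrow \cdots\bigr)$, which maps canonically to $L(t)$ by "shifting"; one must check this shift map is compatible and that $L \dashv U$. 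Everything else (limits, colimits, generators, exactness of filtered colimits, effectivity of equivalence relations) is inherited termwise from $T$ via the fact that $U$ creates them, so those steps are routine. If one prefers to avoid Giraud's theorem entirely, the site-theoretic construction sketched above is more explicit but requires checking that the twisted-arrow topology is well-defined and subcanonical, which is comparably fiddly.
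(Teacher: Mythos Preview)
Your Giraud-axiom strategy is sound in outline and would go through, but there is a genuine error in your construction of the left adjoint $L$ to the forgetful functor $U\colon T_F\to T$. You propose $L(t)$ as the colimit of a tower $t\leftarrow F^{-1}t\leftarrow F^{-2}t\leftarrow\cdots$, but for a general object $t$ of $T$ there is \emph{no} natural map $F^{-1}t\to t$ (nor one $t\to F^{-1}t$): the existence of such a map is precisely the extra structure that an object of $T_F$ carries. So no such tower exists, and your worry about $F^{-1}$ preserving this sequential colimit is moot. The correct left adjoint is the coproduct $L(t)=\coprod_{n\ge 0}(F^{-1})^{n}t$, equipped with the ``shift'' inclusion $F^{-1}L(t)\cong\coprod_{n\ge 1}(F^{-1})^{n}t\hookrightarrow L(t)$ as its $F$-structure; the adjunction bijection is then immediate. (The paper writes down essentially this formula, in the abelian-sheaf case, in the proof of the very next proposition.) Relatedly, your framing of $T_F$ as coalgebras for ``the comonad $F^{-1}$'' is misleading: $F^{-1}$ carries no counit or comultiplication in general, so the standard topos-of-coalgebras theorem does not literally apply; your alternate description as a lax equalizer of $(\mathrm{id},F^{-1})$ is the accurate one. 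With $L$ corrected, your steps (1)--(3) do verify Giraud's axioms.

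The paper itself takes the route you list only as an alternative: it arranges (citing a Stacks Project lemma) that $F$ is induced by a morphism of sites $F\colon\mathscr{C}\to\mathscr{C}$ on some site presenting $T$, after which building a site for $T_F$ out of $(\mathscr{C},F)$ is declared ``relatively straightforward''. That approach sidesteps Giraud and produces an explicit site presentation; your approach is intrinsic to the topos but requires the corrected $L$ to produce generators.
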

The proof is relatively straightforward, once you assume that $F$ comes from a map of sites $F:\mathscr{C}\to \mathscr{C}$. This can be arranged with judicious use of \cite{stacks-project}*{\href{http://stacks.math.columbia.edu/tag/03A1}{Tag 03A1}}

\begin{rmk}
  There is a functor $T_F\to T$ which forgets the $F$ structure. However, while this functor does have a left adjoint, this adjoint does not preserve finite limits. Therefore, this natural functor is not a map of topoi.
\end{rmk}

A ring object $R$ in $T$ with a ring object map $F^{-1}R\to R$ gives a ring in $T_F$, which we call $R_F$. Let $\Mod(R)$ denote the category of modules over $R$ and $\Mod(R_F)$ the category of modules over $R_F$. An element of $\Mod(R_F)$ is an $R$-module $M$ with a map $F^{-1}M\to M$ compatible the map $F^{-1}R\to R$. Thus there is a forgetful map $\Mod(R_F)\to \Mod(R)$. Like above, this map does not come from a map of ringed topoi. Let $\Ab(T)$ be the category of abelian group objects on $T$, and similarly define $\Ab(T_F)$. We have a functor $\Ab(T_F)\to \Ab(T)$

We recall
\begin{defi}\cite{stacks-project}*{\href{http://stacks.math.columbia.edu/tag/079X}{Tag 079X}}
  Let $C$ be a site. An abelian sheaf $\mathcal{F}$ on $C$ is said to be limp if for every sheaf of sets $K$, we have that $H^p(K,\mathcal{F})=0$ for all $p\geq 1$.
\end{defi}

The importance of limp sheaves comes from the following proposition.
\begin{prop}
  If $f:T\to T'$  is a morphism of topoi, limp sheaves are acyclic for $f_*:\Ab(T)\to \Ab(T')$.
\end{prop}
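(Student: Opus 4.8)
The plan is to prove the equivalent statement that $R^q f_*\mathcal{F}=0$ for every $q\geq 1$ and every limp sheaf $\mathcal{F}$ on $T$, since this vanishing is exactly what it means for $\mathcal{F}$ to be acyclic for $f_*\colon\Ab(T)\to\Ab(T')$. First I would reduce to the case in which $f$ is the morphism of topoi induced by a continuous morphism of sites $u\colon\mathcal{D}\to\mathcal{C}$, with $T=\Shv(\mathcal{C})$ and $T'=\Shv(\mathcal{D})$, so that $f_*\mathcal{G}$ is the sheaf $V\mapsto\mathcal{G}(u(V))$; this is always possible by \cite{stacks-project}*{\href{http://stacks.math.columbia.edu/tag/03A1}{Tag 03A1}}, just as in the Remark above.

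Next I would invoke the standard description of higher direct images along a morphism of sites: $R^q f_*\mathcal{F}$ is the sheaf on $\mathcal{D}$ associated to the presheaf $V\mapsto H^q(\mathcal{C}/u(V),\mathcal{F}|_{\mathcal{C}/u(V)})$, and this cohomology over the localized site is canonically identified with $H^q(h_{u(V)}^{\#},\mathcal{F})$, the cohomology of $\mathcal{F}$ computed over the sheaf of sets represented by the object $u(V)$ of $\mathcal{C}$. The conclusion is then immediate: because $\mathcal{F}$ is limp and $h_{u(V)}^{\#}$ is a sheaf of sets, $H^q(h_{u(V)}^{\#},\mathcal{F})=0$ for all $q\geq 1$ and all objects $V$ of $\mathcal{D}$, so the presheaf $V\mapsto H^q(\mathcal{C}/u(V),\mathcal{F})$ vanishes identically, whence so does its sheafification $R^q f_*\mathcal{F}$.

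The substance of the argument --- and the main obstacle --- lies entirely in the two quoted facts of the second step: that $R^q f_*$ is the sheafification of $V\mapsto H^q(\mathcal{C}/u(V),\mathcal{F})$, and that the cohomology of $\mathcal{F}$ over the localized site $\mathcal{C}/u(V)$ coincides with its cohomology over the associated representable sheaf of sets $h_{u(V)}^{\#}$. Both are standard in the theory of cohomology on sites (see \cite{stacks-project}), but one must be careful to arrange the morphism of sites so that $f_*$ genuinely is restriction along $u$, to match this concrete picture against the abstract morphism of topoi, and to keep straight which localization of $\mathcal{C}$ is intended; that bookkeeping is essentially all there is to do.
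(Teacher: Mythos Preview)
The paper does not actually supply a proof of this proposition: it is stated immediately after the definition of limp sheaves (quoted from \cite{stacks-project}*{\href{http://stacks.math.columbia.edu/tag/079X}{Tag 079X}}) as a known fact explaining their importance, and the paper moves on directly to the next proposition. Your argument is correct and is essentially the standard one underlying the Stacks Project treatment. One small simplification: the identification in your step three is close to definitional, since $H^q(K,\mathcal{F})$ for a sheaf of sets $K$ is \emph{defined} via the localization $\mathcal{C}/K$, and for $K=h_{u(V)}^{\#}$ this localization is equivalent to $\mathcal{C}/u(V)$; so once you have the presheaf description of $R^qf_*$, the limp hypothesis applies immediately without a separate comparison step.
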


\begin{prop}\label{prop:limp}
  With notation as above, the functor $\Ab(T_F)\to \Ab(T)$ has an exact left adjoint, takes injective objects to injective objects and limp sheaves to limp sheaves. Additionally, $\Mod(R_F)\to \Mod(R)$ takes limp sheaves to limp sheaves and in particular takes injective sheaves to limp sheaves. Lastly, $\Mod(R_F)$ has enough injectives.
\end{prop}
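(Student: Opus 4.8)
The plan is to construct an explicit exact left adjoint of the forgetful functor $u\colon\Ab(T_F)\to\Ab(T)$ and then deduce every remaining assertion formally from it, together with the parallel construction one level down on sheaves of sets. Because $F\colon T\to T$ is a morphism of topoi, $F^{-1}$ is exact and commutes with all colimits, so the iterates $(F^{-1})^{n}$ make sense for $n\geq 0$. I would set $L(\mathcal{M})=\bigoplus_{n\geq 0}(F^{-1})^{n}\mathcal{M}$, with $F$-structure the ``shift'' inclusion $F^{-1}L(\mathcal{M})=\bigoplus_{n\geq 1}(F^{-1})^{n}\mathcal{M}\hookrightarrow\bigoplus_{n\geq 0}(F^{-1})^{n}\mathcal{M}$. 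Unwinding the compatibility condition in the definition of a morphism in $\Ab(T_F)$, a map $L(\mathcal{M})\to(\mathcal{N},\psi)$ is a family $(f_{n}\colon(F^{-1})^{n}\mathcal{M}\to\mathcal{N})_{n\geq0}$ with $f_{n}=\psi\circ F^{-1}(f_{n-1})$, hence is freely determined by $f_{0}\colon\mathcal{M}\to\mathcal{N}$; thus $L$ is left adjoint to $u$. It is exact, since $F^{-1}$ is exact and direct sums are exact in the Grothendieck abelian category $\Ab(T)$; this is consistent with the earlier remark, whose ``failure of finite limits'' refers to the sheaf-of-sets version of this adjoint, whereas on abelian sheaves exactness concerns only kernels and cokernels, which $L$ preserves. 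As an exact left adjoint, $L$ forces $u$ to carry injective objects to injective objects.

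Next the limp assertions. The functor $u$ is itself exact, as kernels and cokernels in $\Ab(T_F)$ are computed on underlying sheaves. The same recipe $K\mapsto\coprod_{n\geq0}(F^{-1})^{n}K$ defines a functor $L_{\mathrm{Set}}$ on sheaves of sets, left adjoint to ``forget the $F$-structure'' (this is the left adjoint alluded to in the earlier remark), and it yields, for each sheaf of sets $K$ in $T$, an identity of functors $\Gamma(K,u(-))=\Gamma\bigl(L_{\mathrm{Set}}(K),-\bigr)\colon\Ab(T_F)\to\Ab$, where $\Gamma(K,-)$ denotes sections over $K$ in the sense used to define limpness. Given $\mathcal{F}\in\Ab(T_F)$ and an injective resolution $\mathcal{F}\to I^{\bullet}$ in $\Ab(T_F)$, the complex $u(I^{\bullet})$ is an injective resolution of $u(\mathcal{F})$ by the previous paragraph, so $H^{p}(K,u(\mathcal{F}))=H^{p}\bigl(\Gamma(K,u(I^{\bullet}))\bigr)=H^{p}\bigl(\Gamma(L_{\mathrm{Set}}(K),I^{\bullet})\bigr)=H^{p}\bigl(L_{\mathrm{Set}}(K),\mathcal{F}\bigr)$; if $\mathcal{F}$ is limp this vanishes for $p\geq1$ since $L_{\mathrm{Set}}(K)$ is a sheaf of sets in $T_F$, so $u(\mathcal{F})$ is limp. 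Because $\Mod(R_F)\to\Mod(R)$ agrees with $u$ on underlying abelian sheaves and limpness of a module means limpness of its underlying abelian sheaf, this functor also preserves limp sheaves; and injective $R_F$-modules are limp (injective modules over a ring in a topos are limp, see \cite{stacks-project}), hence are carried to limp sheaves. Finally $\Mod(R_F)$ is the category of modules over a ring object in the topos $T_F$ (a topos by the previous proposition), hence a Grothendieck abelian category, and therefore has enough injectives.

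The only genuinely delicate part is the bookkeeping: one must check that $F^{-1}$ commutes with the coproducts in sight and that the shift $F$-structure produces exactly the claimed universal property, and one should carry this out first on sheaves of sets and then transfer it to $\Ab$ and to $\Mod(R_F)$ without circularity. Beyond the black-boxed fact that injective modules are limp, everything else is formal once the exact left adjoint is in hand.
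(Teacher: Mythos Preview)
Your argument is correct and follows the same approach as the paper: construct the left adjoint explicitly as $\bigoplus_{n\geq 0}(F^{-1})^n\mathcal{M}$ with the shift $F$-structure, and deduce the remaining assertions formally. Your version is considerably more detailed than the paper's proof, which records only the left adjoint and its exactness; in particular your treatment of limpness via the set-level adjoint $L_{\mathrm{Set}}$ is a clean way to fill in what the paper leaves implicit. (Note also that your indexing from $n=0$ is the right one for the adjunction with the forgetful functor; the paper's sum starting at $i=1$ appears to be a slip.)
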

\begin{proof}
  The left adjoint is given by taking an $A\in \Ab(T)$ to $\bigoplus_{i=1}^\infty F^{-i}A$ with $F$-structure given by the composite $F^{-1}(\bigoplus_{i=1}^\infty F^{-i}A)=\bigoplus_{i=2}^\infty F^{-i}A\hookrightarrow \bigoplus_{i=1}^\infty F^{-i}A$. This is obviously exact
\end{proof}

\begin{rmk}\label{rmk:Fstruct}

We may apply this all to the case of $\Mod(\O_X)_\Fcrys$. Let $X_{\Fcrys}$ denote the ringed topos constructed via the above process taking $T=X_\crys$ and $F:X_{\crys}\to X_{\crys}$ to be map induced from $F_X:X\to X$. This is even a map of ringed sites if we give $X_{\crys}$ the structure ring $\O_X$; there is a natural map $F:F^{-1}\O_X\to \O_X$.

Let $f:X\to S$ be a map of $k$ schemes. Let $F_X$ denote the absolute Frobenius on $X$ and similarly $F_S$ denote the absolute Frobenius on $S$. We have $f\circ F_X=f\circ F_S$. We obtain a map of ringed topoi $f_F:X_\Fcrys\to S_\Fcrys$. We also obtain a commutative diagram:

\[
  \begin{tikzcd}
    \Mod(\O_X)_\Fcrys\arrow[r]\arrow[d,"f_{F,*}"]& \Mod(\O_X)_\crys \arrow[d,"f_{*}"]\\
    \Mod(\O_S)_\Fcrys\arrow[r]& \Mod(\O_S)_\crys.
  \end{tikzcd}
\]

Let $D(\O_X)_\Fcrys$ denote the derived category of $\Mod(\O_X)_\Fcrys$. We thus get a diagram
\[
  \begin{tikzcd}
    D(\O_X)_\Fcrys\arrow[r]\arrow[d,"\Rb f_{F,*}"]& D(\O_X)_\crys \arrow[d,"\Rb f_*"]\\
    D(\O_S)_\Fcrys\arrow[r]& D(\O_S)_\crys.
  \end{tikzcd}
\]

Proposition \ref{prop:limp} implies this diagram commutes.

\end{rmk}
\begin{defi}\label{defi:Fstruct}
  Let $X$ be a $k$-scheme. Let $\Mod(\O_X,F)_\Q$ be the abelian category obtained form $\Mod(\O_X,F)$ by inverting multiplication by $p$, or equivalently by quotienting by the Serre subcategory consisting of modules killed by a power of $p$. For any $(M,\phi)\in \Mod(\O_X,F)_\Q$, define $M(n)$ to be the element of $\Mod(\O_X,F)_\Q$ given by $(M,p^{-n}\phi)$.
\end{defi}
\section{Proof of Theorem \ref{thm:2} in the projective case}\label{sec:proof2}

The following is a partial strengthening of a theorem of Matthew Morrow. Recall $\phi$ refers to the Frobenius action on crystalline cohomology.
\begin{thm}[\cite{Morrow:vartate} 3.5,3.7]\label{thm:logcris}

  Let $X$ be a regular $\F_p$ scheme. Then the cokernel of the map $H^1_\cont(X_{\et},W\Omega_{X,\log})\to H^2_{cris}(X)^{\phi=p}$ is killed by $p^2$. Furthermore,  if $X$ is smooth and proper over a perfect $k$, this map has kernel killed by $p$.

\end{thm}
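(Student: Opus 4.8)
The plan is to realise the map of the theorem as the degree-two piece of a comparison between a ``weight-one syntomic'' complex built from logarithmic de Rham--Witt forms and crystalline cohomology, and then to locate $H^2_\cris(X)^{\phi=p}$ inside $H^2_\cris(X)$ by slope considerations, carefully tracking the $p$-power defects that appear along the way.

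\emph{Construction of the map.} Since $W_r\Omega^1_{X,\log}$ consists of closed forms, the inclusion $W_r\Omega^1_{X,\log}\hra W_r\Omega^1_X$ extends to a map of complexes $W_r\Omega^1_{X,\log}[-1]\to W_r\Omega^{\ge1}_X\hra W_r\Omega_X^\bullet$. Applying $\Rlim_r\Rb\Gamma$ and using the identification $H^n_\cris(X/W)\cong\Hb^n_\cont(X,W\Omega_X^\bullet)$ of Section 3.1 of \cite{Morrow:vartate} produces the desired homomorphism $H^1_\cont(X_\et,W\Omega_{X,\log})\to H^2_\cris(X)$; its image lands in the $\phi=p$ eigenspace because $F$ acts as the identity on $W\Omega^1_{X,\log}$, while $\phi$ acts in degree $v$ of $W\Omega_X^\bullet$ as $p^vF$.

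\emph{A syntomic fibre sequence.} On $W\Omega^{\ge1}_X$ the Frobenius $\phi$ is divisible by $p$; write $\phi_1=p^{-1}\phi\colon W\Omega^{\ge1}_X\to W\Omega_X^\bullet$. I would use the exact sequence of pro-\'etale sheaves $0\to\{W_r\Omega^j_{X,\log}\}\to\{W_r\Omega^j_X\}\xrightarrow{1-\mathcal F}\{W_r\Omega^j_X\}\to0$ of \cite{Morrow:KlogHW} (which, for $j\ge2$, controls the logarithmic part of the cone of $1-\phi_1$ on $W\Omega^{\ge2}_X$) to identify the mapping fibre $\Z_p(1)_X:=\operatorname{Fib}\!\bigl(W\Omega^{\ge1}_X\xrightarrow{1-\phi_1}W\Omega_X^\bullet\bigr)$ with $W\Omega^1_{X,\log}[-1]$ up to a universally bounded power of $p$; over a merely regular $\F_p$-scheme this is an isogeny rather than an isomorphism, which is a first source of extra factors of $p$. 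Taking $\Rlim_r\Rb\Gamma$ of this fibre sequence yields the exact sequence
\[
\Hb^1_\cont(X,W\Omega^{\ge1}_X)\xrightarrow{1-\phi_1}H^1_\cris(X)\to H^2(X,\Z_p(1)_X)\to\Hb^2_\cont(X,W\Omega^{\ge1}_X)\xrightarrow{1-\phi_1}H^2_\cris(X),
\]
so that $H^1_\cont(X_\et,W\Omega_{X,\log})\cong H^2(X,\Z_p(1)_X)$ (modulo bounded torsion) maps onto $\ker\bigl(1-\phi_1\colon\Hb^2_\cont(X,W\Omega^{\ge1}_X)\to H^2_\cris(X)\bigr)$ with kernel a quotient of $\coker\bigl(1-\phi_1\colon\Hb^1_\cont(X,W\Omega^{\ge1}_X)\to H^1_\cris(X)\bigr)$.

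\emph{Comparison with the $\phi=p$ eigenspace and bookkeeping of $p$-powers.} The image of $\Hb^2_\cont(X,W\Omega^{\ge1}_X)$ in $H^2_\cris(X)$ is the first step $\operatorname{Fil}^1$ of the filtration coming from the truncations $W\Omega^{\ge i}_X$, and there $\phi=p\,\phi_1$; hence $\ker(1-\phi_1)$ maps into $H^2_\cris(X)^{\phi=p}$. Conversely $H^2_\cris(X)/\operatorname{Fil}^1$ embeds into $H^2_\cont(X,W\O_X)$, whose slopes are $<1$, so every $\phi=p$ class already lies in $\operatorname{Fil}^1$, and the slope-one part of $\operatorname{Fil}^1/\operatorname{Fil}^2\subseteq H^1_\cont(X,W\Omega^1_X)$ is exactly what $H^1_\cont(X_\et,W\Omega_{X,\log})$ detects, so one recovers all of $H^2_\cris(X)^{\phi=p}$ modulo bounded torsion. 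The remaining, and in my view hardest, task is to pin the bounds down to the stated $p^2$ and $p$, rather than merely concluding that the kernel and cokernel are torsion (which is immediate from slopes): this means optimising the powers of $p$ contributed by the isogeny $\Z_p(1)_X\simeq W\Omega^1_{X,\log}[-1]$, by the gap between $\Hb^2_\cont(X,W\Omega^{\ge1}_X)$ and $\operatorname{Fil}^1H^2_\cris(X)$, and by $\coker\bigl(1-\phi_1\colon\Hb^1_\cont(X,W\Omega^{\ge1}_X)\to H^1_\cris(X)\bigr)$, and in the smooth proper case additionally invoking the Illusie--Raynaud degeneration of the slope spectral sequence modulo torsion together with the finiteness properties of Hodge--Witt cohomology. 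This fine $p$-adic bookkeeping is precisely the input imported from \cite{Morrow:vartate}.
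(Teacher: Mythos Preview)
Your overall architecture is close to the paper's, but there is a real gap and one misidentification of where the $p$-powers come from.

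\textbf{The gap.} The whole content of this statement, as the paper itself says, is to ``work out what the constant of Remark~3.7 is''. You outline a framework and then write that ``pinning the bounds down to the stated $p^2$ and $p$\ldots is precisely the input imported from \cite{Morrow:vartate}''. That is not a proof of the theorem as stated; it is a proof that the kernel and cokernel are $p$-power torsion, which as you note is immediate from slopes. The paper does not import the constants; it computes them.

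\textbf{Where the two $p$'s actually come from.} The paper uses Morrow's short exact sequence of pro-sheaves
\[
0\to \{W_r\Omega^1_{X,\log}\}_r\to\{W_r\Omega_X^{\geq 1}\}_r\xrightarrow{1-\mathcal F}\{W_r\Omega_X^{\geq 1}\}_r\to 0,
\]
which is \emph{exact} for any regular $\F_p$-scheme, not merely an isogeny. So your ``first source of extra factors of $p$'' (the alleged isogeny $\Z_p(1)_X\simeq W\Omega^1_{X,\log}[-1]$ over a regular base) is spurious. The actual first factor of $p$ is the passage from $\ker(1-\mathcal F)$ to the $\phi=p$ eigenspace on $\Hb^2_\cont(X,W\Omega_X^{\geq 1})$: since $\phi=p\mathcal F$ there, one has $p(1-\mathcal F)=p-\phi$, so the cokernel of the inclusion is killed by $p$. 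The second factor of $p$ comes from comparing $\Hb^2_\cont(X,W\Omega_X^{\geq 1})^{\phi=p}$ with $H^2_\cris(X)^{\phi=p}$: the paper uses that $1-\mathcal V$ is an isomorphism on $\Hb^n_\cont(X,W\Omega_X^{<1})$ together with $\phi\mathcal V=p$, which gives cokernel killed by $p$. Your proposal never isolates either of these steps.

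\textbf{The kernel bound.} For $X$ smooth proper, the paper uses degeneration of the slope spectral sequence at $H^1(X,W\O_X)$ (Illusie) to get that $\Hb^2_\cont(X,W\Omega_X^{\geq 1})\to H^2_\cris(X)$ is injective, and then shows by a direct semilinear algebra argument on the Dieudonn\'e module $\Hb^0_\cont(X,W\Omega^1_X)$ that $1-\mathcal F$ is surjective there, whence the kernel is killed by $p$. You gesture at Illusie--Raynaud but do not supply this Dieudonn\'e step, which is where the single $p$ comes from.

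In short: your syntomic packaging with $\phi_1$ and $\Z_p(1)_X$ is a legitimate reformulation of the same mechanism (indeed $\mathcal F=\phi_1$ on $W\Omega^{\geq 1}$), but you have not done the bookkeeping, and one of your claimed sources of $p$ does not exist while two of the actual sources are missing from your list.
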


\begin{proof}

  The above statement is a strengthening of Theorem 3.5 in \cite{Morrow:vartate}; we work out what the constant of Remark 3.7 is. 

  We trace through the proof of Proposition 3.2 of \cite{Morrow:vartate} in the case when $i=1$. We will use intermediary results and notation of that proof.

  We begin with the exact sequence of pro-complexes from \cite{Morrow:vartate} equation $(10)$:
  \[0\to \{W_r\Omega^i_{X,\log}\}_r\to\{W_r\Omega_X^{\geq i}\}_r\xrightarrow{1-\mathcal{F}}\{W_r\Omega_X^{\geq i}\}_r\to 0.\]
  
Taking cohomology gives the exact sequence
\[H^1_\cont(X_\et, W\Omega^1_{X,\log})\to \Hb^{2}_\cont(X,W\Omega_X^{\geq 1})\xrightarrow{1-\mathcal{F}}\Hb^2_\cont(X,W\Omega_X^{\geq 1})\]
  
Thus, $H^1_{\cont}(X_{\et},W\Omega^1_{X,\log})$ surjects onto the kernel of $1-\mathcal{F}$ acting on $\Hb^2_\cont(X,W\Omega_X^{\geq 1})$, and as $p(1-\mathcal{F})=p-\phi$, the map \[ H^1_{\cont}(X_{\et},W\Omega^1_{X,\log}) \to \Hb^2_\cont(X, W\Omega_X^{\geq 1})^{\phi=p}\] has cokernel killed by $p$.

From the sequence of pro-complexes of sheaves \[0\to \{W_r\Omega^{\geq i}_X\}_r\to \{W_r\Omega_X^\bullet\}\to \{W_r\Omega_X^{<i}\}\to 0\] we get the exact sequence:
\[\Hb^{1}_\cont(X,W\Omega_X^{<1})\to \Hb^{2}_\cont(X,W\Omega_X^{\geq 1})\to H^2_\crys(X/W(k))\to \Hb^{2}_\cont(X,W\Omega_X^{<1})\]

Now, in the proof of Proposition 3.2 in \cite{Morrow:vartate} it is proved that $1-\mathcal{V}$ is an isomorphism on $\Hb^n_\cont(X,W\Omega^{>i})$ for all $n$ and $i$. This and two applications of the snake lemma then imply that $\Hb^{2}(X,W\Omega^{\geq 1})^{1=\mathcal{V}}\to H^{2}_{cris}(X)^{1=\mathcal{V}}$ is surjective. As $\phi \mathcal{V}=FV=p$ on $W\Omega_X^{<1}$, we see that the map $\Hb^{2}(X,W\Omega_X^{\geq 1})^{\phi=p}\to H^{2}_{cris}(X)^{\phi=p}$ has cokernel killed by $p$. We conclude the composite map $H^1_{\cont}(X_{\et},W\Omega^1_{X,\log})\to H^{2}_{\crys}(X)^{\phi=p}$ has cokernel killed by $p^2$.

Now, consider $X$ which is smooth and proper over $k$. We have that $\Hb^{1}_{\cont}(X,W\Omega_{X_0}^{<1})=H^1(X_0,W\O_{X_0})$. By II.3.11 of \cite{Illusie:dRW}, the slope spectral sequence degenerates at $H^1(X_0,W\O_{X_0})$ and hence the map $\Hb^{1}_{\cont}(X,W\Omega_{X}^{<1})\to \Hb_\cont^2(X,W\Omega_X^{\geq 1})$ is zero, and thus $\Hb_\cont^{2}(X,W\Omega_{X}^{\geq 1})\to H^{2}_{\crys}(X/W(k))$ is an injection. This in turn implies that $\Hb_\cont^{2}(X,W\Omega_{X}^{\geq 1})^{\phi=p}\to H^{2}_{\crys}(X)^{\phi=p}$ is an injection.

The kernel of $H^1_\cont(X_\et, W\Omega^1_{X,\log})\to \Hb^{2}_\cont(X,W\Omega_X^{\geq 1})$ is the cokernel of $\Hb^1_\cont(X,W\Omega_X^{\geq 1})\xrightarrow{1-\mathcal{F}}\Hb^1_\cont(X,W\Omega_X^{\geq 1})$. This is isomorphic to the map $\Hb^0_\cont(X,W\Omega_X^1)\xrightarrow{1-\mathcal{F}}\Hb^0_\cont(X,W\Omega_X^1)$. Now $\Hb^0_\cont(X,W\Omega_X^1)$ is a torsion-free Dieudonne module with Frobenius action given by $\mathcal{F}$ (see \cite{Illusie:dRW} II.2.19). Let $N=\Hb^0_\cont(X,W\Omega_X^1)/p$. This is a finite dimensional vector space over $k$ on which $\mathcal{F}$ acts semilinearly. If $\mathcal{F}$ acts by injectively it must act bijectively as $k$ is perfect. In this case, Frobenius descent implies that $1-\mathcal{F}$ is surjective. If $\mathcal{F}$ is zero it is obvious $1-\mathcal{F}$ is an isomorphism. Now let $N_1$ be the subvector space on which $\mathcal{F}$ acts by zero. Clearly $N_1$ is in the image of $1-\mathcal{F}$ and $\mathcal{F}$ preserves $N_1$, thus $\mathcal{F}$ descends to an action on $N/N_1$ and we conclude by induction on dimension that $1-\mathcal{F}$ is surjective on $N/N_1$ from which we conclude it is surjective on $N$.

An approximation argument then implies that $\Hb^0_\cont(X,W\Omega_X^1)\xrightarrow{1-\mathcal{F}}\Hb^0_\cont(X,W\Omega_X^1)$ is surjective, thus the cokernel of $\Hb^0_\cont(X,W\Omega_X^1)\xrightarrow{p-\phi}\Hb^0_\cont(X,W\Omega_X^1)$ is killed by $p$.

We now put everything together: for general $X$ we get that the map $H^2_\cont(X_\et,W\Omega^1_{X,\log})\to H^2_\crys(X)^{p=\phi}$ has cokernel killed by $p^2$, and in the case when $X$ is smooth and proper over $k$ the map $H^2_\cont(X_{\et},W\Omega^1_{X,\log})\to H^2_\crys(X)^{p=\phi}$ has cokernel killed by $p^2$ and has kernel killed by $p$.

\end{proof}

\begin{prop}\label{prop:fs}
  Let $k$ be a perfect field of characteristic $p$. Let $S$ be a smooth $k$-variety and $f:X\to S$ be a smooth, projective morphism of $k$-varieties.  There are $s\geq 0$ and $m\geq 0$ such that for each $x:\Spec k[[t]]\to S$, there exist flat crystals $C^i_x$ on $\Spec k[[t]]$ such that
  \begin{enumerate}
  \item there is a map $\Rb_\crys f_* \O_{X,x}\to \bigoplus_i C^i_x[-i]$ whose cone has cohomology sheaves killed by $p^s$
  \item there are maps $p^m\phi:F^{-1}C^i_x\to C^i_x$ making the following diagram commute
    \begin{center}
      \begin{tikzcd}
        F^{-1}\Rb_\crys f_*\O_X\arrow[r,"p^m \phi"]\arrow[d]& \Rb_\crys f_*\O_X\arrow[d]\\
        \bigoplus_i F^{-1}C^i_x\arrow[r,"p^m \phi"]& \bigoplus_i C^i_x.
      \end{tikzcd}
    \end{center}
  \end{enumerate}
\end{prop}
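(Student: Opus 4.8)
The plan is to prove the decomposition first over $S$, in the isogeny category of crystals, using crystalline hard Lefschetz together with Deligne's decomposition criterion, and then to pull back along $x$ and replace the resulting crystals on $\Spec k[[t]]$ by flat ones, keeping every power of $p$ that intervenes bounded by invariants of $X\to S$ that do not depend on $x$.

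\textbf{Reductions and the decomposition over $S$.} Since every open subset of $\Spec k[[t]]$ meeting its closed point is the whole scheme, each $x\colon\Spec k[[t]]\to S$ factors through an affine open of $S$ and through an irreducible component, so we may assume $S$ is smooth, affine and integral and $f$ has constant relative dimension $d$. Write $R^i=R^i_\crys f_*\O_{X/S}$; by the finiteness and base-change theorems for crystalline cohomology of smooth proper morphisms (\cite{BO:criscohom}), each $R^i$ is a coherent crystal, $\Rb_\crys f_*\O_{X/S}$ is a perfect complex in $D(\O_S)_\crys$, and its formation commutes with derived base change. As $f$ is projective there is a relative hyperplane class $\xi$, and crystalline hard Lefschetz (valid after inverting $p$, via Katz--Messing and rigidity of isocrystals, checked on fibres) says that $\xi^j$ induces isomorphisms $R^{d-j}\xrightarrow{\sim}R^{d+j}$ in $\Mod(\O_S)_{\crys,\Q}$. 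Deligne's decomposition criterion then yields an isomorphism $\Rb_\crys f_*\O_{X/S}\cong\bigoplus_i R^i[-i]$ in $D(\O_S)_{\crys,\Q}$, compatible with the Frobenius $\phi$ (an equality in the $\Q$-linearized category, as $\phi$ is semilinear). Because all $\Hom$-groups between perfect complexes with coherent cohomology are finitely generated over the Noetherian base, this isomorphism and its Frobenius-compatibility are $p^{-k}$ times honest data for a $k$ bounded in terms of the ranks of the $R^i$, the numerical class of $\xi$, and the Hodge and Newton polygons of the fibres --- quantities taking finitely many values on $S$. Hence there are $s_0,m_0\ge 0$ depending only on $X\to S$, an honest map $\beta\colon\Rb_\crys f_*\O_{X/S}\to\bigoplus_i R^i[-i]$ in $D(\O_S)_\crys$ with cone killed by $p^{s_0}$, and restrictions $p^{m_0}\phi\colon F^{-1}R^i\to R^i$ of the natural Frobenius making the evident square commute.

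\textbf{Base change and flattening over $\Spec k[[t]]$.} Fix $x$. Pulling $\beta$ back and using crystalline base change for the smooth proper $f$ gives a map $\Rb_\crys f_{x*}\O_{X_x}\to\bigoplus_i(x^*R^i)[-i]$ with cone killed by $p^{s_0}$, and base-change maps $x^*R^i\to R^i_\crys f_{x*}\O_{X_x}$ with cone killed by $p^{s_1}$, where $s_1$ bounds the Tor-amplitude and the coherent torsion of $\Rb_\crys f_*\O_{X/S}$ and so depends only on $X\to S$. A crystal on $\Spec k[[t]]$ has value a coherent module $M$ over the two-dimensional regular ring $B=W(k)[[t]]$; set $M'=M/(p\text{-power torsion})$ and $C=(M')^{\vee\vee}$. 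Reflexive modules over a two-dimensional regular local ring are free, so $C$ is free over $B$, and since the projections of Remark~\ref{rmk:crystals} are flat, the operations $M\mapsto M'$ and $M'\mapsto(M')^{\vee\vee}$ carry crystal structures to crystal structures functorially; thus $C$ underlies a flat crystal inheriting the Frobenius. Applying this to $M=R^i_\crys f_{x*}\O_{X_x}$ produces flat crystals $C^i_x$ with maps $p^{m_0}\phi\colon F^{-1}C^i_x\to C^i_x$, and $R^i_\crys f_{x*}\O_{X_x}\to C^i_x$ is an isogeny: its kernel is $p$-power torsion, and its cokernel is the finite-length module $(M')^{\vee\vee}/M'$, killed by $\mathfrak{m}^{N_i}$ --- hence by $p^{N_i}$ --- for suitable $N_i$.

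\textbf{Assembly.} Composing
\[
\Rb_\crys f_{x*}\O_{X_x}\longrightarrow\bigoplus_i(x^*R^i)[-i]\longrightarrow\bigoplus_i R^i_\crys f_{x*}\O_{X_x}[-i]\longrightarrow\bigoplus_i C^i_x[-i]
\]
gives a map whose cone is killed by $p^{s}$ with $s=s_0+s_1+\max_i N_i$, compatible with $p^{m_0}\phi$ on each summand; this is the assertion with $m=m_0$.

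\textbf{The main obstacle.} Every quantity above is visibly uniform in $x$ except the $N_i$ bounding $(M'_x)^{\vee\vee}/M'_x$ for $M_x=R^i_\crys f_{x*}\O_{X_x}$: a priori the reflexive hull could introduce arbitrarily many factors of $p$ as $x$ varies over the locus of $S$ where crystalline cohomology of the fibres acquires torsion. To control $N_i$ one reduces, via Lemma~\ref{lem:flatonopen}, to a dense open $U\subseteq S$ over which $R^i|_U$ is already a flat crystal --- so that for $x$ factoring through $U$ the correction is trivial --- and disposes of the remaining $x$ (those with $x(s)\notin U$, possibly $x(\eta)\in U$) by Noetherian induction on $\dim S$, applying the already-proved statement to the family over each smooth stratum of a finite stratification of $S$; the point that such an $x$ need not factor through a single stratum is absorbed by replacing $S$, for the given $x$, by the integral closure $Z$ of the image of $x(\eta)$, whose generic point lies in the good locus, so that the reflexive-hull correction over $\Spec k[[t]]$ is again governed by the coherence bounds for $R^i$ pulled back to $Z$. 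Carrying out this $p$-integral refinement of the crystalline Deligne decomposition with denominators independent of $x$ is the technical heart of the argument.
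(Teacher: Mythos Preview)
Your overall strategy---crystalline hard Lefschetz plus Deligne's decomposition criterion to split $\Rb_\crys f_*\O_X$ up to bounded $p$-power torsion, then pull back along $x$ and flatten each cohomology crystal via the reflexive hull over the two-dimensional regular local ring $W(k)[[t]]$---is exactly the paper's approach, and you have correctly isolated the one genuinely nontrivial point: bounding the reflexive-hull correction $N_i$ independently of $x$.

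Your proposed resolution of this point, however, does not work. Noetherian induction on a stratification of $S$ fails precisely for the $x$ you yourself single out, those with $x(\eta)\in U$ but $x(s)\notin U$, since such an $x$ factors through no single stratum; and your fallback of replacing $S$ by the closure $Z$ of the image of $x$ produces bounds depending on $Z$, hence on $x$, so uniformity is lost rather than gained. The paper's fix is different and avoids induction entirely. Choose a $p$-adic formal lift $\widetilde S$ of the affine $S$ and consider the concrete crystal $\Fs^i$ whose value on $\widetilde S$ is $\widetilde S(R^i_\crys f_*\O_X)$; after shrinking $S$ this value admits a map to a \emph{free} $\O_{\widetilde S}$-module with kernel and cokernel killed by a single $p^\alpha$. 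The crucial observation is that for \emph{any} $x$ one can lift $x$ to $g\colon T=\Spf W(k)[[t]]\to\widetilde S$, and then the value of $x^*\Fs^i$ on $T$ is literally $g^*\bigl(\widetilde S(R^i_\crys f_*\O_X)\bigr)$. Pullback along $g$ preserves freeness of the target and the bound $p^\alpha$ on kernel and cokernel, so $T(x^*\Fs^i)$ is $p^\alpha$-close to free for every $x$ at once. This immediately and uniformly bounds the defect of $T(x^*\Fs^i)\to T(x^*\Fs^i)^{\vee\vee}$, which is what you called $N_i$. In short: the uniformity comes not from stratifying $S$ but from choosing a single global presentation over $\widetilde S$ whose distance from free is a fixed constant and is preserved under arbitrary pullback.
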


\begin{proof}
  We assume $S$ connected. Let $d$ be the relative dimension of $X\to S$. For any open cover $\{U_i\}$ of $S$, any $\Spec k[[t]]$-point of $S$ will factor through one of the $U_i$ as $k[[t]]$ is local. Thus we may and do assume that $S$ is affine. Let $\widetilde{S}$ be a formal scheme over $W(k)$ which is a $p$-adically complete, formally smooth deformation of $S$, which exists because $S$ is affine. By \cite{Morrow:crystal} Corollary 2.4, each $\widetilde{S}(R^i_\crys f_* \O_X)_\Q$ is a projective $(\O_{\widetilde{S}})\otimes \Q$ module. We further shrink $S$ so that each $\widetilde{S}(R^i_\crys f_* \O_X)_\Q$ is free. There is then an integer $\alpha$ and integers $r_i$ such that there are maps $\widetilde{S}(R^i_\crys f_* \O_X)\to \O_{\widetilde{S}}^{r_i}$ whose kernel and cokernel are killed by $p^\alpha$.

  As in \cite{Morrow:crystal} Lemma 2.2, there is a crystal, which we will denote $\Fs^i$ whose value on $\widetilde{S}$ is $\widetilde{S}(R^i_\crys f_*\O_X)$ and with natural isogeny $\Fs^i\to R^i_\crys f_*\O_X$. For any $U$ an open of $S$ and PD thickening $U\hookrightarrow T$ and any map $h:T\to \widetilde{S}$ compatible with $PD$ structures, $\Fs^i(T)=h^*\widetilde{S}(R^i_\crys f_*\O_X)$. By the basechange theorem for crystalline (\cite{BO:criscohom}[Theorem 7.8]) and a Tor spectral sequence, it follows that the power of $p$ killing the kernel and cokernel of the natural isogeny $\Fs^i\to R^i_\crys f_*\O_X$ is bounded by a constant depending only on $\alpha$.

  By \cite{Morrow:crystal} Proposition 5.1, if we choose a line bundle $\L$ on $X$ relatively ample for $X\to S$, we have a Lefschetz map $L^i:R^{d-i}f_*\O_X\to R^{d+i}f_*\O_X$ where $L=\cup c_1(\L)$, which becomes an isomorphism in $\Mod(\O_X)_\Q$. Let $\beta$ be such that for each $i$, the kernel and cokernel of $L^i:R^{d-i}f_*\O_X\to R^{d+i}f_*\O_X$ are killed by $p^\beta$. We also have a map $L^i:\Fs^{d-i}\to \Fs^{d+i}$ obtained from pulling back the cup-product $\cup c_1(\L)^i:\widetilde{S}(R^{d-i}f_*\O_X)\to\widetilde{S}(R^{d+i}f_*\O_X)$. Then the following diagram commutes:
  \begin{center}
    \begin{tikzcd}
      \Fs^{d-i}\arrow[d]\arrow[r,"L^i"]&\Fs^{d+i}\arrow[d]\\
      R^{d-i}f_*\O_X\arrow[r,"L^i"]& R^{d+i}f_*\O_X.
    \end{tikzcd}
  \end{center}

  Now consider a map $x:\Spec k[[t]]\to S$. Denote the pullback of $f$, $X_x\to \Spec k[[t]]$ again by $f$. By the explicit description of $\Fs^i$ and the functoriality of cohomology, we have a map $x^*\Fs^i\to R^i f_*\O_{X_x}$. This map, by the basechange theorem for crystalline cohomology (\cite{BO:criscohom}[Theorem 7.8]) and a Tor spectral sequence, has kernel and cokernel killed by a power of $p$ depending only on $d$ and $\alpha$. Now, $\L$ pulls back to a line bundle on $X_x$ and thus gives us a map $L^i:R^{d-i}f_*\O_{X_x}\to R^{d+i}f_*\O_{X_x}$, and we have the following diagram commutes:
  \begin{center}
    \begin{tikzcd}
      x^*\Fs^{d-i}\arrow[d]\arrow[r,"L^i"]&x^*\Fs^{d+i}\arrow[d]\\
      R^{d-i}f_*\O_{X_x}\arrow[r,"L^i"]& R^{d+i}f_*\O_{X_x}.
    \end{tikzcd}
  \end{center}
Therefore, the map $L^i:R^{d-i}f_*\O_{X_x}\to R^{d+i}f_*\O_{X_x}$ has kernel and cokernel killed by a power of $p$ depending only of $d$, $\alpha$, and $\beta$.

For a section $y$ of $R^if_*\O_{X_x}$, we have $\phi Ly=pL\phi y$. Using the notation of Remark \ref{rmk:Fstruct} and Definition \ref{defi:Fstruct}, we may write this as $R^{d-i}f_{F,*}\O_{X_x}(i)\to R^{d+i}f_{F,*}\O_{X_x}$ (recall $R^i f_{F,*}\O_X$ is $\Rb f_*\O_X$ with the canonical $F$-structure). Then by \cite{Deligne:lefschetz}, there is an isomorphism $(\Rb_\crys f_{F,*} \O_X)_\Q\to \bigoplus_{i}(R^i_\crys f_{F,*}\O_X)_\Q$. In fact, examining the proof in \cite{Deligne:lefschetz}  we see we even get a map $\Rb_\crys f_{F,*} \O_X(-m')\to \bigoplus_{i}R^i_\crys f_{F,*}\O_X(-m')$ for some $m'$ depending only on $d$ and whose cone has cohomology groups killed by a power of $p$ depending only on the power of $p$ killing the kernel and cokernel of $\Rb^{d-i}f_{F,*}\O_{X_x}\to \Rb^{d+i}f_{F,*}\O_{X_x}(-i)$, that is depending only on $d$, $\alpha$, and $\beta$. This gives $p^{m'}\phi$ equivariant map $\Rb_\crys f_{F,*} \O_X\to \bigoplus_{i}R^i_\crys f_{F,*}\O_X$ whose cone has cohomology groups killed by a power of $p$ depending only on $d$, $\alpha$, and $\beta$.

Now let $T=\Spf W(k)[[t]]$ by which we mean the formal scheme $\varinjlim_k \Spec W(k)[[x]]/p^k$. Give $W(k)[[t]]$ the PD structure compatible with the canonical one on $W(k)$. Recall that for any $U\subseteq \Spec k[[t]]$ and PD-thickening $U\hookrightarrow V$, there is map $V\to T$ compatible with the maps from $U$ and PD-structures (\cite{Katz:Fcrys}).

For any $x:\Spec k[[t]]\to S$, we may find a lift $g:T\to \widetilde{S}$ compatible with PD-structure. Then $(x^*\Fs^i) (T)=g^*\widetilde{S}(R^i_\crys f_*\O_X)$. We then get a map $(x^*\Fs^i)(T)\to \O_T^{r_i}$ with kernel and cokernel killed by $2\alpha$ given by the pullback of $\widetilde{S}(R^i_\crys f_*\O_X)\to \O_{\widetilde{S}}^{r_i}$. Set $\Gs^i=x^*\Fs^i$. This is a crystal as $\Fs$ was a crystal.

  Let form $\Gs^{i,\vee\vee}$ applying Remark \ref{rmk:dualcrystals} twice, and consider the map $\Gs^i\to \Gs^{i,\vee \vee}$. As $T(\Gs^{i\vee \vee})$ is a reflexive module over a two dimensional, regular, noetherian, local ring it is free. Hence $\Gs^{i,\vee \vee}$ is a flat crystal. As $T(\Gs^i)$ maps to a free module with kernel and cokernel killed by $2\alpha$, we conclude that the kernel and cokernel of $T(\Gs^i)\to T(\Gs^{i\vee\vee})$ are killed by a power of $p$ depending only on $\alpha$. A Tor and Ext calculation then implies that the kernel and cokernel of $\Gs^i\to \Gs^{i\vee\vee}$ are killed by a power of $p$ depending only on $\alpha$. Let $C^i_x=\Gs^{i\vee\vee}$. From the map $\Gs^i=x^*\Fs^i\to R^i_\crys f_*(\O_{X,x})$ whose kernel and cokernel are killed by a power of $p$ dependingly only on $d$ and $\alpha$, we see there is map $R^i_\crys f_*(\O_{X,x})\to C^i_x$ with kernel and cokernel killed by a power of $p$ depending only on $d$ and $\alpha$.

  From the maps $R^i_\crys f_*(\O_{X,x})\to C^i_x$, we may find an integer $m>m'$ dependingly only on $d$ and $\alpha$, and maps $p^{m}\phi:C^i_x\to C^i_x$ making the following commutes:
 \begin{center}
    \begin{tikzcd}
      F^{-1}R_\crys^i f_*\O_X\arrow[r,"p^{m} \phi"]\arrow[d]& R_\crys^i f_*\O_X\arrow[d]\\
      F^{-1}C^i_x\arrow[r,"p^{m} \phi"]& C^i_x.
    \end{tikzcd}
  \end{center}
This gives an $F$-structure to $C^i_x$

  Finally we consider the composite \[\Rb_\crys f_*\O_{X_x}\to \bigoplus_i R^i_\crys f_*(\O_{X_x})[-i]\to \bigoplus_i C^i_x[-i],\]
  which is $p^m\phi$ equivariant and has cone with cohomology sheaves killed by a power of $p$ depending only on $d$, $\alpha$, and $\beta$. As $C^i_x$ are flat crystals, this proves the lemma.

\end{proof}

We record the following interesting corollary
\begin{cor}\label{cor:fs}
  Then there an integer $N$ such that for all integers $a$ and $b$ and for each map $x:\Spec k[[t]]\to S$, if $f$ also denotes the map $f:X_x\to \Spec k[[t]]$, then the map $  H^2_\crys (X_x/W(k))^{\phi^k=p^\ell}\to H_\crys^0(\Spec k[[t]], R^2_\crys f_* \O_{X_x})^{\phi^k=p^\ell}$ has cokernel killed by $p^N$.
\end{cor}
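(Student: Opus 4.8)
The plan is to deduce this from Proposition~\ref{prop:fs} together with the fact that $\Spec k[[t]]$ has crystalline cohomological dimension at most one. Fix $x\colon\Spec k[[t]]\to S$ and write $f\colon X_x\to\Spec k[[t]]$. Proposition~\ref{prop:fs} furnishes constants $s,m\ge 0$ depending only on $X\to S$, flat crystals $C^i_x$ on $\Spec k[[t]]$, a map $g\colon\Rb_\crys f_*\O_{X_x}\to\bigoplus_i C^i_x[-i]$ whose cone has cohomology killed by $p^s$, and compatible maps $R^i_\crys f_*\O_{X_x}\to C^i_x$ with kernel and cokernel killed by $p^s$ (enlarge $s$ to absorb both), all intertwining the operators $p^m\phi$. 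Since $\Spec k[[t]]$ has the affine formal model $\Spf W(k)[[t]]$, whose module of continuous differentials over $W(k)$ is free of rank one, crystalline cohomology on $\Spec k[[t]]$ with coefficients in any crystal is computed by a two-term de Rham complex, so $H^j_\crys(\Spec k[[t]],-)=0$ for $j\ge 2$. Applying $\Rb\Gamma_\crys(\Spec k[[t]],-)$ to $g$ and passing to $H^2$ (using $\Rb\Gamma_\crys(\Spec k[[t]],\bigoplus_i C^i_x[-i])=\bigoplus_i\Rb\Gamma_\crys(\Spec k[[t]],C^i_x)[-i]$) therefore yields a $p^m\phi$-equivariant map
\[
H^2_\crys(X_x/W(k))\longrightarrow D_x:=H^1_\crys(\Spec k[[t]],C^1_x)\oplus H^0_\crys(\Spec k[[t]],C^2_x)
\]
whose kernel and cokernel are killed by $p^{2s}$: the summand $H^2_\crys(\Spec k[[t]],C^0_x)$ vanishes, and $\Rb\Gamma_\crys$ of a complex whose cohomology is killed by $p^s$ has, in each degree, cohomology killed by $p^{2s}$ by the cohomological dimension bound.

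Next I would identify, up to a bounded power of $p$, the edge map of the corollary with the projection $D_x\to H^0_\crys(\Spec k[[t]],C^2_x)$. The two-column Leray-type spectral sequence for $f$ shows that the edge map $e\colon H^2_\crys(X_x/W(k))\to H^0_\crys(\Spec k[[t]],R^2_\crys f_*\O_{X_x})$ is surjective with kernel $H^1_\crys(\Spec k[[t]],R^1_\crys f_*\O_{X_x})$. On the other hand the map $R^2_\crys f_*\O_{X_x}\to C^2_x$ induces, after $H^0_\crys(\Spec k[[t]],-)$, a $p^m\phi$-equivariant map $H^0_\crys(\Spec k[[t]],R^2_\crys f_*\O_{X_x})\to H^0_\crys(\Spec k[[t]],C^2_x)$ with kernel and cokernel killed by a bounded power of $p$; and since the degree-$2$ component of $g$ realizes exactly this map on $\mathcal{H}^2$, functoriality of the edge construction gives that the composite $H^2_\crys(X_x/W(k))\xrightarrow{e}H^0_\crys(\Spec k[[t]],R^2_\crys f_*\O_{X_x})\to H^0_\crys(\Spec k[[t]],C^2_x)$ equals the composite $H^2_\crys(X_x/W(k))\to D_x\to H^0_\crys(\Spec k[[t]],C^2_x)$ of the previous paragraph with the projection.

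Finally I would run an elementary argument on $\phi$-eigenspaces. Because $\phi$ acts on $D_x$ respecting the direct sum decomposition, the projection $D_x\to H^0_\crys(\Spec k[[t]],C^2_x)$ is surjective on the subgroups where $\phi^k=p^\ell$. It then suffices to show that a $p^m\phi$-equivariant map of $W(k)$-modules whose kernel and cokernel are killed by $p^c$ induces, on the subgroups where $\phi^k=p^\ell$, a map whose cokernel is killed by a power of $p$ depending only on $c$ and $m$, and crucially not on $k$ or $\ell$: one pulls back a $p^c$-multiple of a given eigenvector, observes that $(p^m\phi)^k-p^{mk+\ell}$ carries the chosen preimage into the bounded kernel, and corrects. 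Chaining this statement through the two isogenies and the edge-map identification above produces the desired bound with $N$ an explicit function of the constants $s$ and $m$ of Proposition~\ref{prop:fs}.

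The main obstacle is exactly this last interaction. The decomposition of Proposition~\ref{prop:fs} is only $p^m\phi$-equivariant rather than $\phi$-equivariant, which forces one to compare the eigenspace where $\phi^k=p^\ell$ with the eigenspace where $(p^m\phi)^k=p^{mk+\ell}$; these agree after inverting $p$ but can differ in $p$-power torsion, and the crystalline cohomology of the open disk can carry unbounded $p$-power torsion. One must therefore argue carefully that the surjectivity of the projection on eigenspaces survives both the twist and the torsion with only a bounded loss — handled by passing to torsion-free quotients, using that the groups $H^0_\crys(\Spec k[[t]],C^i_x)$ are torsion-free and that $R^2_\crys f_*\O_{X_x}$ is isogenous to the torsion-free $C^2_x$ with isogeny bounded by $p^s$, and absorbing the torsion of the $H^1$-term into the connecting homomorphism. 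Everything else is bookkeeping of exponents, all ultimately governed by the relative dimension of $X\to S$ and the constants $\alpha,\beta$ appearing in Proposition~\ref{prop:fs}.
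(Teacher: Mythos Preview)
Your approach is essentially the paper's: invoke Proposition~\ref{prop:fs}, use that $\Spec k[[t]]$ has crystalline cohomological dimension $\le 1$ so that $H^2_\crys(X_x/W(k))$ maps with bounded kernel and cokernel to $H^1_\crys(\Spec k[[t]],C^1_x)\oplus H^0_\crys(\Spec k[[t]],C^2_x)$, factor the projection to the second summand through $H^0_\crys(\Spec k[[t]],R^2_\crys f_*\O_{X_x})$, and then pass to $\phi$-eigenspaces. The paper writes this down more tersely and simply asserts the passage from the $p^m\phi^a=p^{m+b}$ eigenspace bound to the $\phi^a=p^b$ eigenspace bound in its final line; your extended discussion of why this twist costs only a bounded power of $p$ (via torsion-freeness of $H^0_\crys$ on flat crystals and the bounded isogeny $R^2_\crys f_*\O_{X_x}\to C^2_x$) is a reasonable way to justify that step, which the paper leaves implicit.
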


\begin{proof}[Proof of Corollary \ref{cor:fs}]
  Now take $x:\Spec k[[t]]\to S$. Let $C_x^i$ and $m$ be as in Proposition \ref{prop:fs}. Let $D_x$ be the cone of $\Rb_\crys f_* \O_{X,x}\to \bigoplus_i C^i_x[-i]$.

  The canonical map \[H^2_\crys(X_x/W(k))=\Hb^2_\crys(\Spec k[[t]],\Rb_\crys f_* \O_{X,x})\to \Hb^2_\crys(\Spec k[[t]],C^i_x[-i])=\bigoplus_{i=1}^2 H^i_\crys(\Spec k[[t]],C_x^i)\]
  has kernel and cokernel killed by a power of $p$ depending only on $m$ and $d$.

  As the above map is $p^m\phi$ linear, for any $a$ and $b$ the map \[H^2_\crys(X_x/W(k))^{p^m\phi^a=p^{b+m}}\to\bigoplus_{i=1}^2 H^i_\crys(\Spec k[[t]],C_x^i)^{p^m\phi^a=p^{b+m}}\] has kernel and cokernel killed by a power of $p$ dependinig only on $m$ and $d$.

  Now the map $H^2_\crys(X_x/W(k))\to H^0_\crys(\Spec k[[t]],C_x^2)$ factors through $H^0_\crys(\Spec k[[t]],R^2_\crys f_*\O_X)$, and the map $H^0_\crys(\Spec k[[t]],R^2_\crys f_*\O_X)\to H^0_\crys(\Spec k[[t]],C_x^2)$ has kernel and cokernel killed by a power of $p$ depending only on $m$ and $d$.

  We thus conclude that $H^2_\crys(X_x/W(k))^{p^m\phi^a=p^{m+b}}\to H^0_\crys(\Spec k[[t]],R^2_\crys f_*\O_X))^{p^m\phi^a=p^{m+b}} $ has cokernel killed by a power of $p$ depending only on $m$ and $d$.

  We conclude that $H^2_\crys(X_x/W(k))^{\phi^a=p^{b}} \to H^0_\crys(\Spec k[[t]],R^2_\crys f_*\O_X))^{\phi^a=p^{b}}$ has cokernel killed by a power of $p$ depending only on $m$ and $d$. Define $N$ so that $p^N$ is this power of $p$.

\end{proof}

\begin{lem}\label{lem:katzlemma}

  Let $k$ be a algebraically closed field of characteristic $p>0$ and $C$ be a flat, coherent crystal on $\Spec k[[t]]$ with constant Newton polygon. Let $C_0$ denote be the $W(k)$ module corresponding to the fiber at the closed point. There is an $n_C$ depending only on the Newton polygon on $C$ such that torsion in the cokernel of the map $H^0_{cris}(\Spec k[[t]],C)\to C_0$ is killed by $p^{n_C}$.
\end{lem}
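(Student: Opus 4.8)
The plan is to relate the cohomology of $C$ on the crystalline site of $\Spec k[[t]]$ to a concrete computation on the formal deformation $\wt S = \Spf W(k)[[t]]$, and then exploit the constancy of the Newton polygon via a Katz-type argument. First I would use Remark \ref{rmk:crystals}: since $C$ is a flat coherent crystal on $\Spec k[[t]]$, it corresponds to a finite free $W(k)[[t]]$-module $M$ together with an $F$-structure (a $\phi$-linear, or rather a Frobenius-semilinear, connection-type descent datum). Concretely, $H^0_\cris(\Spec k[[t]], C)$ is computed as the horizontal sections, i.e. the kernel of $M \xrightarrow{\nabla} M \otimes \widehat\Omega^1_{W(k)[[t]]/W(k)}$, where $\nabla$ is the integrable connection encoded by the crystal structure; and $C_0$ is the fiber $M/tM$, a finite free $W(k)$-module. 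The map in question is restriction of a horizontal section to the special fiber. So the lemma amounts to: the cokernel of $\{$horizontal sections of $(M,\nabla)\} \to M/tM$ has torsion killed by a bounded power of $p$.

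The key step is to produce enough horizontal sections. Over $W(k)((t))^{\wedge}$ — better, after adjoining a variable to make the connection trivial — one has the formal horizontality: for any $m_0 \in M/tM$, the formal expansion $\sum_{n\ge 0} \frac{(-1)^n}{n!}\,t^n (\partial^n m)(0)$ (suitably interpreted through the connection) is a horizontal section lifting $m_0$, but it lives in $M \otimes_{W(k)[[t]]} W(k)[[t]]\langle \text{divided powers of } t\rangle$ rather than in $M$ itself. This is exactly where the divided-power denominators $n!$ enter, and they are not bounded. The way to control this is the Frobenius structure: since the Newton polygon of $C$ is constant, Frobenius acts with bounded slopes, and one can run the standard argument (as in Katz's work on $p$-adic properties of Frobenius, and as used by Morrow and by Crew) that the $\phi$-stable lattice generated by a lift of $m_0$ inside the divided-power enlargement is in fact contained in $p^{-n_C} M$ for an $n_C$ depending only on the slopes — equivalently, iterating the Frobenius semilinear isomorphism $\phi^* M[1/p] \cong M[1/p]$ enough times moves the divided-power denominators into the range controlled by the slope filtration. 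Thus one obtains, for each $m_0 \in M/tM$, a genuine horizontal section of $(p^{n_C}M, \nabla)$ reducing to $p^{n_C} m_0$, which is precisely the assertion that the torsion in the cokernel is killed by $p^{n_C}$.

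I would organize the argument as follows: (i) translate $H^0_\cris(\Spec k[[t]], C) = \ker(\nabla)$ and $C_0 = M/tM$ via Remark \ref{rmk:crystals}, reducing to a statement about $(M,\nabla,\phi)$; (ii) extend scalars so that the connection becomes formally integrable with a solution in the divided-power completion, writing the Taylor-series horizontal lift of an arbitrary $m_0 \in M/tM$; (iii) invoke constancy of the Newton polygon to get a Frobenius slope decomposition (over $W(k)((t))^{\wedge}$ or after a finite base change, using the Dieudonn\'e–Manin-type classification relative to the base) with uniformly bounded denominators; (iv) use the $\phi$-equivariance to show the Taylor lift, though a priori in the divided-power completion, satisfies $p^{n_C}$ times it lies in $M$, with $n_C$ a function of the slopes only; (v) conclude. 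The main obstacle is step (iii)–(iv): making the slope filtration argument work \emph{relative to} the base $\Spec k[[t]]$ rather than over a point, and extracting from it a bound $n_C$ that genuinely depends only on the Newton polygon and not on $C$ — this is the technical heart, and it is where I expect to lean most heavily on the style of argument in \cite{Katz:Fcrys} and \cite{Morrow:vartate}.
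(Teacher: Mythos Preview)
Your approach differs from the paper's and, while pointed in the right direction, leaves the central step unfilled. The paper does not attempt to bound divided-power denominators in a Taylor expansion. Instead it invokes \cite{Katz:Fcrys}*{Theorem 2.7.4} as a black box: a flat coherent $F$-crystal on $\Spec k[[t]]$ with constant Newton polygon is isogenous to a crystal $C'$ that becomes \emph{constant} after base change to the perfection $k[[t]]^{\mathrm{perf}}$, and the degree of this isogeny is bounded by a power of $p$ depending only on the Newton polygon. For such a $C'$ the paper then checks directly that $H^0_\cris(\Spec k[[t]], C') \to C'_0$ has $p$-torsion-free cokernel, because the lift of Frobenius $F\colon W(k)[[t]]\to W(k)[[t]]$ does not improve $p$-divisibility, so a flat section of $C'$ cannot become $p$-divisible after passing to $W(k[[t]]^{\mathrm{perf}}) = (\varinjlim_F W(k)[[t]])^\wedge$, where $C'$ is constant. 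The bound $p^{n_C}$ thus comes entirely from the degree of the isogeny $C\to C'$.

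Your steps (iii)--(iv) are essentially an attempt to reprove Katz's theorem, and the sketch does not say how. The ``Dieudonn\'e--Manin-type classification relative to the base'' you gesture at \emph{is} Katz's theorem; there is no cheaper slope-decomposition statement over $W(k)[[t]]$ that would let you sidestep it, and once you have it the Taylor-series scaffolding is unnecessary. The Taylor framing is also slightly off target: the divided-power envelope of $(t)$ in $W(k)[[t]]$ is not where the argument naturally lives, and iterating the Frobenius lift (roughly $t\mapsto t^p$) does not in any evident way shrink the $n!$ denominators --- what it does is exactly produce the limit $W(k[[t]]^{\mathrm{perf}})$, which is why the paper's perfection argument is the clean formulation. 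I would replace your (ii)--(iv) by a direct citation of Katz's isogeny theorem and then the short $p$-divisibility check over the perfection.
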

\begin{proof}
  By \cite{Katz:Fcrys} Theorem 2.7.4 and $C$ is isogenous to a flat, coherent crystal $C'$ which becomes constant over the perfection of $k[[t]]$, and an inspection of the  proof of that theorem shows that the degree of this isogeny is bounded by a power of $p$ which depends only of the Newton polygon of $C$.

  Now let $C'$ be a crystal that becomes isogenous a constant one over the perfection of $k[[t]]$. By the description of crystals in \cite{Katz:Fcrys} Section 2.4, $C'$ corresponds to a flat coherent module $E$ on $W(k)[[t]]$ (hence free) with a nilpotent connection, and $H^0(\Spec k[[t]],C')$ corresponds to flat sections of $C'$ with respect to this connection. Let $C_0'$ be the fiber of $C'$ at $t=0$. Choose a lift of the absolute Frobenius on $k[[t]]$ to $W(k)[[t]]$ which is compatible with the Frobenius on $W(k)$ (for instance that which takes $t$ to $t^p$). Let this lift be denoted $F$. Since $E$ becomes constant on $k[[t]]^{perf}$, by \cite{Katz:Fcrys} Section 2.4, it follows that the extension of scalars of $E$ to $(\varinjlim_F W(k)[[t]])^\wedge=W(k[[t]]^{perf})$ is constant (where we take the $p$-adic completion of the direct limit). Now $F:W(k)[[t]]\to W(k)[[t]]$ has the property that if $x\in W(k)[[t]]$ is not divisible by $p$, then $F(x)$ is not divisible by $p$. Thus a flat section of $E$ cannot become divisible when base changed to $W(k[[t]]^{perf})$. We have then that the cokernel of $H^0(C')\to C'_0$ is $p$ torsion free. Thus the $p$-torsion in the cokernel of $H^0(C)\to C$ is killed by the degree of the isogeny $C\to C'$, which is a power of $p$ bounded by a constant depending only on the Newton polygon of $C$.
\end{proof}

\begin{proof}[Proof of Theorem \ref{thm:2} in the projective case]

  Let $d$ be the relative dimension of $X\to S$. Let $U$ be a dense open of $S$ be such that $R^2f_*\O_X$ is isogenous to a flat crystal, $\C$ (see Lemma \ref{lem:flatonopen}) and furthermore such that $\C$ has constant Newton polygon (see \cite{Katz:Fcrys}).

Choose a point $x:\Spec k[[t]]\to U$. Let $Y=\times_{U,x} \Spec k [[t]]$. Let $Y_0=Y\otimes_{k[[t]]}k$. Take $s$, $m$, $C^i_x$, and $p^m\phi$ as in Proposition \ref{prop:fs}. To ease notation, we will write $C^i$ for $C^i_x$, and $C^i_0$ for  the fiber of $C^i$ at the chosed point of $k[[t]]$. Note as $C^2$ and $x^*\C$ are isogenous, the Newton polygon of $C^2$ is also constant.

We begin with the $p^{m}\phi$ equivariant map $\Rb f_*\O_Y\to \bigoplus_i C^i[-i]$ whose cone has cohomology sheaves killed by $p^s$. Pulling this back to $\Spec k$, and using the basechange theorem for crystalline cohomology (\cite{BO:criscohom}[Theorem 7.8]), we get a $p^m\phi$ equivariant map $\Rb f_*\O_{Y_0}\to \bigoplus_i C^i_0[-i]$ whose cone has cohomology sheaves killed by a power of $p$ depending only on $s$ and $d$. Thus the maps $H^i(Y_0/W(k))\to C^i_{0}$ have kernel and cokernel killed by a power of $p$ depending only on $s$ and $d$.

We then consider the commutative diagram:
\begin{center}
  \begin{tikzcd}
    H^1_\cont(Y,W\Omega^1_{Y,\log})\arrow[r]\arrow[d]& H^2_\crys(Y/W(k))^{p^{m}\phi=p^{m+1}}\rar \dar & H^0(\Spec k[[t]],C^2)^{p^{m}\phi=p^{m+1}}\dar\\
    H^1_\cont(Y_0,W\Omega^1_{Y_0,\log})\rar&H^2_\crys(Y_0/W(k))^{p^{m}\phi=p^{m+1}}\rar&(C^2_{0})^{p^{m}\phi=p^{m+1}}.
  \end{tikzcd}
\end{center}

The top composite has cokernel killed by a power of $p$ depending only on $s$, $d$, and $m$. The bottom composite has kernel and cokernel killed by a power of $p$ depending only on $s$, $d$, and $m$. Both of these follow from Theorem \ref{thm:logcris} and Proposition \ref{prop:fs} (made more explicit in the proof of Proposition \ref{prop:fs}). Lastly, Lemma \ref{lem:katzlemma} implies $p$ torsion in the cokernel of the right vertical arrow is killed by a power of $p$ depending on the Newton polygon of $C^2$ which is the same as that of $\C$. Therefore, there is an $N$ such that \[H^1_\cont(Y,W\Omega^1_{Y,\log})\to H^1_\cont(Y_0,W\Omega^1_{Y_0,\log})\] has cokernel whose $p$-power torsion is killed by $p^N$ and $N$ depends only only on $s$, $d$, $m$, and the Newton polygon on $\C$.

Now let $\L$ be a line bundle on $Y_0$, and let $n\geq N$ be an integer such that $p^{n}\L$ lifts to $Y$. Then $p^{n}c_1(\L)\in H^1_\cont(Y_0,W\Omega^1_{Y_0,\log})$ is in in the image of $H^1_\cont(Y,W\Omega^1_{Y,\log})$. As the $p$-power torsion in the cokernel of $H^1_\cont(Y,W\Omega^1_{Y,\log})\to H^1_\cont(Y_0,W\Omega^1_{Y_0,\log})$ is killed by $p^N$, this implies that $p^Nc_1(\L)$ is in the image of $H^1_\cont(Y,W\Omega^1_{Y,\log})$. By Theorem \ref{thm:vartate}, this means $p^N\L$ lifts to $Y$. Thus, $N$ has the desired property and the theorem is proved.

\end{proof}

\begin{proof}[Proof of Theorem \ref{thm:2prime}]
  We keep notation as in the proof of Theorem \ref{thm:2}, where $U$ and $\C$ have the same hypotheses. Additionally, shrink $U$ so that $X\times_SU\to U$ satisfies Assumption \ref{ass:1} with respect to an integer $N'$, which we can do by Proposition \ref{prop:ass1holds}. Take $x:\Spec k[[t]]\to U$, and $Y=\Spec k[[t]]\times_{x,U}X$, and $Y_m=Y\otimes_{k[[t]]}k[t]/t^{m+1}$. Thus there is an $N$ such that the cokernel of $H^1_\cont(Y,W\Omega^1_{Y,\log})\to H^1_\cont(Y_0,W\Omega^1_{Y_0,\log})$ has $p$-power torsion killed by $p^N$. By Proposition \ref{prop:surj}, $H^1_\cont(Y,W\Omega^1_{Y,\log})\to \varprojlim_m H^1_\cont(Y_0,W\Omega^1_{Y_m,\log})$ is surjective, so the map $\varprojlim_m H^1_\cont(Y_m,W\Omega^1_{Y_m,\log})\to  H^1_\cont(Y_0,W\Omega^1_{Y_0,\log})$ also has $p$-power torsion killed by $p^N$.

  Assumption \ref{ass:1} automatically holds $Y\to \Spec k[[t]]$, as Assumption \ref{ass:1} is stable under pullbacks. Now assume $n\geq N$ is an integer such that $p^{n}y$ lifts adically to $Y$. By Proposition \ref{prop:compatible-padic-bundles}, there exist $z_m\in \Pic(Y_m)\otimes \Z_p$ forming a compatible system, such that the image of $z_0$ in $\NS(Y_0)\otimes \Z_p$ is $p^{n+N'}y$.

 Taking Chern classes, we get an element $c'=\{c_1(y_m)\}_m\in \varprojlim_m H^1_\cont(X_m,W\Omega^1_{X_m})$. The image of $c'$ in $H^1_\cont(Y_0,W\Omega^1_{Y_0,\log})$ is $p^{n+N'}c_1(\L)$. As the $p$-power torsion in the cokernel of $\varprojlim_m H^1_\cont(Y_m,W\Omega^1_{Y_m,\log})\to  H^1_\cont(Y_0,W\Omega^1_{Y_0,\log})$ is killed by $p^N$, there is an element $c$ in the group $\varprojlim_m H^1_\cont(Y_m,W\Omega^1_{Y_m,\log})$ whose image in $H^1_\cont(Y_0,W\Omega^1_{Y_0,\log})$ is $p^{N}c_1(\L)$.

 Now we apply Theorem 3.4(i) of \cite{Morrow:KlogHW} (a variant of Theorem \ref{thm:vartate}), to conclude that $p^{N}\ell$ lifts adically. Thus, $N$ has the desired property and the theorem is proved.

\end{proof}

\section{Proof of Theorem \ref{thm:3a} in the projective case}\label{sec:proofproj}

\begin{setup}\label{set:3b}
  Let $K=k((t))$ and $\O_K=k[[t]]$. Let $\ov{K}$ be an algebraic closure of $K$ and $\O$ be the integral closure of $k[[t]]$ inside $\ov{K}$.

  If $\X\to \B$ is a smooth, proper map of $\O$-schemes with $\B$ irreducible and $\eta$ a generic point of $\B$, let $\B(K)_{\textup{jumping}}=\{b\in \B(K):\rho(\X_b)> \rho(\X_{\ov{\eta}})\}$, and $\B(\O)_{\textup{jumping}}=\B(\ov{K})_{\textup{jumping}}\cap \B(\O)$.
\end{setup}
\begin{thm}\label{thm:3b} 
  Let $\X\to \B$ be a smooth map of smooth, finite-type $k[[t]]$-schemes satisfying Assumption \ref{ass:1} and Assumption \ref{ass:2} and such that $\B$ is irreducible. Then $\B(\O)_{\textup{jumping}}\subseteq \B(\O )$ is nowhere dense, with notation as in Setup \ref{set:3b}
\end{thm}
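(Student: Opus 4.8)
\section*{Proof proposal for Theorem \ref{thm:3b}}

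The plan is to argue locally. Being nowhere dense is a local property in the $t$-adic topology, so it suffices to produce, for each $x_0\in\B(\O)$, an open neighborhood $W$ of $x_0$ in $\B(\O)$ such that $W\cap\B(\O)_{\textup{jumping}}$ is nowhere dense in $W$. Fix $x_0$; after replacing $k((t))$ by a finite extension (which leaves $\O$ unchanged, only changing the uniformizer) we may assume $x_0$ is an $\O_K$-point, and we shrink $W$ so that every $x\in W$ reduces to the single geometric point $b\coloneqq\overline{x_0}\in\B(k)$ modulo the maximal ideal --- so that the special fibre of $\X_x\to\Spec k[[s_x]]$ is $X_b\coloneqq\X_b$ for all $x\in W$ --- and so that $W$ lies in an affine open. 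Let $A=\widehat{\O_{\B,b}}$; smoothness of $\B$ over $k[[t]]$ of relative dimension $\delta$ lets us choose coordinates identifying $A$ with $k[[t,z_1,\dots,z_\delta]]$ over $k[[t]]$, and every $x\in W$ factors through $\Spf A\to\B$ after a finite extension, with the $t$-adic topology near $x_0$ being that of a neighborhood of the origin in $\m_\O^{\,\delta}$ in the coordinates $z_i$. By the recollections on specialization maps in Section \ref{sec:NSgroups} and the explicit description in Remark \ref{rmk:des}, for $x\in W$ the image $\Lambda_x\subseteq\NS(X_b)$ of the specialization map has rank $\rho(\X_{\overline{\eta_x}})$ and is exactly the set of classes of $X_b$ that lift, after a finite extension of $k[[s_x]]$, to $\X_x$. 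Writing $\Lambda^0\subseteq\NS(X_b)$ for the subgroup of classes that lift to $\X_x$ for every $x\in W$, one has $\Lambda^0\subseteq\Lambda_x$ always, and $\operatorname{rk}\Lambda^0=\rho_0\coloneqq\rho(\X_{\overline\eta})$: the inequality ``$\le$'' follows by testing at the (dense) $x$ whose generic point dominates $\B$, and ``$\ge$'' by spreading out a $\Q$-basis of $\NS(\X_{\overline\eta})$ and taking scheme-theoretic closures, using that $A$ is a regular integral domain through whose fraction field $\kappa(\eta)$ factors. Hence $x\in\B(\O)_{\textup{jumping}}\cap W$ if and only if some class $\lambda\in\NS(X_b)$ with $\lambda\notin\Lambda^0\otimes\Q$ lifts to $\X_x$ after a finite extension.

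For a single such $\lambda$, the key geometric input is Proposition \ref{prop:liftingformal}. Using Assumption \ref{ass:1}, form the proper formal $\Spf A$-scheme $\Pic^{\lambda}_{\X_A/A}$ as in Remark \ref{rmk:p-adic-Picard}, and let $Z_\lambda\subseteq\Spf A$ be its scheme-theoretic image. If $Z_\lambda$ were all of $\Spf A$, Proposition \ref{prop:liftingformal} would produce, for \emph{every} $\Spf k[[s]]$-point, a finite extension over which $\lambda$ lifts, forcing $\lambda\in\Lambda^0$; so for $\lambda\notin\Lambda^0$, $Z_\lambda$ is a proper closed formal subscheme of $\Spf A$. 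Moreover, if $\lambda$ lifts to $\X_x$ over a finite extension $R$ of $k[[s_x]]$, the composite $\Spf R\to\Pic^{\lambda}_{\X_A/A}\to\Spf A$ factors through $Z_\lambda$, and by faithfully flat descent along $\Spf R\to\Spf k[[s_x]]$ so does $x$ itself. Thus $\{x\in W:\lambda\text{ lifts to }\X_x\}\subseteq\{x\in W:x\text{ factors through }Z_\lambda\}$, and the latter is a closed, nowhere dense subset of $W$: a nonzero element $f$ of the ideal of $Z_\lambda$ in $k[[t,z_1,\dots,z_\delta]]$ stays nonzero after the substitution $t\mapsto$ (uniformizer of $x_0$) --- by flatness of $k[[t]]\to k[[s_{x_0}]]$ --- so the vanishing of $f$ along $x$ is a proper analytic condition on $(z_1,\dots,z_\delta)\in\m_\O^{\,\delta}$.

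The main obstacle is that $\B(\O)_{\textup{jumping}}\cap W=\bigcup_{\lambda\notin\Lambda^0\otimes\Q}\{x:\lambda\text{ lifts to }\X_x\}$ is \emph{a priori} only the countable union of the nowhere dense sets just produced, and $\B(\O)$ is not a Baire space --- already $\O=\bigcup_n k[[t^{1/n}]]$ is a countable union of nowhere dense complete subrings --- so meagreness is not enough. This is precisely where Assumption \ref{ass:2} is indispensable. I would use the uniform integer $N$ it supplies (equivalently Corollary \ref{cor:2}), which bounds, independently of $x$ and $\lambda$, the failure of ``$p^n\lambda$ lifts $\Rightarrow p^N\lambda$ lifts'' and, via Corollary \ref{cor:2}, the $p$-primary torsion of $\NS(X_b)/\Lambda_x$, to reduce the rank-$(\rho_0+1)$ condition on $\Lambda_x$ to the liftability of $p^N\lambda$ for $\lambda$ running through a controlled collection of classes in $\NS(X_b)$ --- the point being that, after replacing $\Lambda_x$ by $p^N$ times its saturation, the ``new'' classes to be tested are cut down enough that $\B(\O)_{\textup{jumping}}\cap W$ is contained in a union of nowhere dense sets of the previous paragraph which is itself nowhere dense (e.g.\ because the relevant $Z_\lambda$ fall into finitely many, or because the locus of $x$ that factor through any of them is forced into a manifestly nowhere dense set). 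Making this last reduction precise --- translating the uniform crystalline bound into the required finiteness/nowhere-density statement --- is the technical heart of the proof; everything surrounding it is formal deformation theory and the descent argument above. Finally, the projective hypothesis in Assumption \ref{ass:1} is removed, and the proper (non-projective) case of Theorem \ref{thm:3a} obtained, via the reductions of Section \ref{sec:projprop}.
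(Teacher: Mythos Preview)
Your setup and identification of the central obstacle are correct: you localize to a residue disk, you see that for a single integral class $\lambda\notin\Lambda^0\otimes\Q$ the locus where $\lambda$ lifts is governed by the proper closed formal subscheme $Z_\lambda\subsetneq\Spf A$, and you correctly flag that $\B(\O)$ is not Baire, so the countable union of these loci need not be nowhere dense. But your proposed resolution is exactly where the argument is incomplete. The suggestion that Assumption~\ref{ass:2} lets the relevant $Z_\lambda$ ``fall into finitely many'' is not what happens and does not follow from the uniform bound on $p$-torsion in the cokernel: there are genuinely infinitely many integral directions $\lambda$ to test, and no finite set of them suffices.

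What the paper does instead is work with $\Z_p$-coefficients and exploit compactness. Fix a basis $\ell_1,\dots,\ell_n$ of $Mp^N(\NS(X_0)/\Lambda^0)$. One proves by induction on $m$ that there is a nonempty open $U_m$ on which no nonzero element of $\sum_{i\le m}\Z_p\ell_{m,i}\subset\NS(X_0)\otimes\Z_p$ lifts adically; the case $m=n$ gives the theorem. For the inductive step one looks, for each level $r$, at the finitely many classes $u\in W/p^rW$ with $u\not\equiv 0\pmod p$ (where $W=\sum_{i\le m+1}\Z\ell_{m+1,i}$) and at the scheme-theoretic images of $\Pic^u_{X'_r/B'_r}\to B'_r$. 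If at every level some $u$ surjects, the compactness of $\Z_p^{m+1}\setminus p\Z_p^{m+1}$ produces a single $p$-adic class $\rho$ with $\Pic^\rho$ scheme-theoretically surjective, and Proposition~\ref{prop:liftingformal} then forces $\rho$ to lift everywhere, contradicting either the inductive hypothesis, the choice of $\ell_{m+1,m+1}$, or the generic rank (here Assumption~\ref{ass:1} via Proposition~\ref{prop:compatible-padic-bundles} converts the $p$-adic lift into an integral one). Hence at some finite level $r$ all finitely many $\Pic^u$ have proper image, and a Weil-restriction argument produces a nonempty open $P$ avoiding all of them. Assumption~\ref{ass:2} enters only at the very end: if some nonzero $\Z_p$-combination lifted over a point of $P$, Assumption~\ref{ass:2} lets you divide out the common power of $p$ so that the class has a unit coefficient, hence reduces to some $u\in V_r$, contradicting the construction of $P$. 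This induction-plus-compactness mechanism is the technical heart you allude to but do not supply, and without it the proof does not go through.
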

\begin{proof}[Proof of Theorem \ref{thm:3a} in the projective case]
  Let us take notation of Theorem \ref{thm:3a}. Then by Proposition \ref{prop:ass1holds} and Theorem \ref{thm:2prime}, there exists a dense open $U\subseteq \B$ on which Assumption \ref{ass:1} and Assumption \ref{ass:2} hold. For all $c$, $U(\O_c)_{{\text{jumping}}}$ is nowhere dense in $U(\O_c)$ by Theorem \ref{thm:3b} as $\X_U\otimes \O_c\to U\otimes \O_c$ satisfies Assumptions \ref{ass:1} and \ref{ass:2}. This proves the theorem. 
\end{proof}

\begin{proof}[Proof of Theorem \ref{thm:3b}]
    Let $N$ be such that $X\to \B$ satisfies Assumption \ref{ass:1} and Assumption \ref{ass:2} with respect to $N$.

  If $\B'\to \B$ is surjective and \'{e}tale and the theorem for $\X\times_\B \B'\to \B'$, then the theorem holds for $\X\to \B$, because \'{e}tale maps induce a surjective local homeomorphism $\B'(\O)\to \B(\O)$. As $\X\to \B$ is smooth, it has a section \'{e}tale locally. Thus we may and do assume that $\X\to \B$ has a section.

We must prove that the complement of the locus of $x\in B(\O)$ such that $\NS(\X_{\ov{\eta}_x})\otimes \Q\to \NS(\X_{s_x})\otimes \Q$ is an isomorphism is nowhere dense.

Pick a point $b\in \B(k)$. Let $V$ be a nonempty open subset of the $\B(\O)$ contained in the residue disk of points reducing to $b$. As $\B$ is smooth, we may find a finite extension $K'$ of $K$, and power series ring $R=\O_{K'}[[x_1,\ldots,x_n]]$ with $\O_K$-morphism $\Spec R\to \B$ such that the $\O$-points of $\Spec R$ bijectively to the $\O$-points of an open subset of $V$ (see Section \ref{sec:piadic}). Give the $\O$-points of $\Spec(R)$ the subspace topology.

Now replace $K$ by $K'$. Let $B=\Spec R$ and $X=\X\times_\B B$. It suffices to show that there is an open subset $U$ of $B(\O)$ such that for all $x\in U$, $\NS(X_{\ov{\eta}_x})\otimes \Q\to \NS(X_{s_x})\otimes \Q$ is an isomorphism. Denote the fiber of $X$ over the closed point of $B$ by $X_0$. Let $\m_R$ be the maximal ideal of $R$.

Let $\ov{\eta}$ be a geometric generic point of $B$. Let $G$ be the cokernel of the map $\NS(\X_{\ov{\eta}})\to \NS(X_0)$. This is a finitely generated abelian group. Let $M$ be the size of the torsion subgroup of $G$, and recall that $X\to \B$ satisfies Assumption \ref{ass:2} with respect to $N$. This makes $M p^N G$ a finitely generated, torsion free abelian group. Let $\ov{\ell_1},\ldots,\ov{\ell_n}$ be a basis for this group, and lift each $\ov{\ell_i}$ to an element of $\ell_i$ of $M p^N \NS(X)$. 

\begin{claim}
If $U\subseteq B(\O)$ is such that for all $x\in U$, no $\Z$-linear combination of the $\ell_i$ is in the image of $\NS(X_{\ov\eta_x})\to \NS(X_0)$, then for all $x\in U$, $\NS(X_{\ov{\eta}_x})\otimes \Q\to \NS(X_{s_x})\otimes \Q$ is an isomorphism.
\end{claim}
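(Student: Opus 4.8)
I would prove the contrapositive. Fix $x\in U$ and assume that $\spe_x\colon \NS(X_{\ov\eta_x})\to\NS(X_0)$ is \emph{not} a rational isomorphism; I will produce a nonzero integral combination $\sum_i c_i\ell_i$ lying in $\im(\spe_x)$, contradicting $x\in U$. By the Maulik--Poonen proposition quoted above (the part asserting that $(b)$ holds after inverting $p$ when $\chara k=p$), $\spe_x$ is already injective after $\otimes\Q$ and $\coker(\spe_x)$ has no prime-to-$p$ torsion; so the failure is a failure of surjectivity. Write $H=\im(\spe_x)\subseteq\NS(X_0)$ and let $S=\{z\in\NS(X_0):mz\in H\text{ for some }m\geq 1\}$ be its saturation, so that $\NS(X_0)/S$ is torsion-free and $S$ is a proper subgroup of $\NS(X_0)$. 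Transitivity of the specialization maps along the chain $\eta\rightsquigarrow\eta_x\rightsquigarrow s$ in $B$ (every point of $B$ is a specialization of the generic point $\eta$, and $\eta_x$ specializes to the closed point $s$) shows $\spe_{\ov\eta,\ov s}$ factors through $\spe_x$, so $I:=\im(\spe_{\ov\eta,\ov s})\subseteq H\subseteq S$; by definition $G=\NS(X_0)/I$.

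\textbf{The role of Assumption \ref{ass:2}: $p^{N}S\subseteq H$.} Since $X\to B$ has a section and $x$ factors through some finite extension $\O_L$ of $k[[t]]$, and since $\O_L\cong k[[t]]$ as a $k$-algebra, the base change $X\times_B\Spec\O_L\to\Spec\O_L$ is of the form covered by Assumption \ref{ass:2} and Remark \ref{rmk:2prime}. Unwinding ``lifts adically'', using Grothendieck's existence theorem (we are in the projective case) in one direction and the explicit description of specialization in Remark \ref{rmk:des} (take the closure of a representing divisor) in the other, one identifies, for $y\in\NS(X_0)$, the condition ``$p^{n}y$ lifts adically to $X_x$'' with ``$p^{n}y\in H\otimes\Z_p$''; and, $\coker(\spe_x)$ having no prime-to-$p$ torsion, with ``$p^{n}y\in H$''. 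Thus Assumption \ref{ass:2} says: for $y\in\NS(X_0)$ and $n\geq N$, if $p^{n}y\in H$ then $p^{N}y\in H$. For $z\in S$ we have $p^{c}z\in H$ for some $c$ (again since $\coker(\spe_x)$ has no prime-to-$p$ torsion, the class of $z$ in $\coker(\spe_x)$ is $p$-power torsion), hence $p^{N}z\in H$; so $p^{N}S\subseteq H$.

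\textbf{Linear-algebra finish.} Let $\bar S\subseteq G$ be the image of $S$. Since $I\subseteq S$ and $\NS(X_0)/S\cong G/\bar S$ is torsion-free, $\bar S$ is a proper saturated subgroup of $G$. If $\rank\bar S\geq 1$, then $\bar S\otimes\Q$ is a nonzero $\Q$-subspace of $G\otimes\Q$; as $\ov\ell_1,\dots,\ov\ell_r$ (with $r=\rank G$) are a $\Z$-basis of the torsion-free group $Mp^{N}G$, which is a full-rank sublattice of $G$, they are a $\Q$-basis of $G\otimes\Q$ and $\langle\ov\ell_i\rangle$ is a full lattice in it, so $\langle\ov\ell_i\rangle\cap(\bar S\otimes\Q)$ has positive rank and contains a nonzero element $\xi=\sum_i c_i\ov\ell_i$. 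Since $\bar S$ is saturated and $\xi\in G$, $\xi\in\bar S=S/I$, so $\sum_i c_i\ell_i\in S$ and therefore $\sum_i(p^{N}c_i)\ell_i=p^{N}\sum_i c_i\ell_i\in p^{N}S\subseteq H=\im(\spe_x)$ is a nonzero integral combination of the $\ell_i$ in $\im(\spe_x)$, the desired contradiction. The only remaining case is $\rank\bar S=0$, i.e.\ $\rho(X_{\ov\eta_x})=\rank S=\rank I=\rho(X_{\ov\eta})$; I expect to dispose of this separately (it is incompatible with $\spe_x$ failing to be a rational isomorphism in the setting in which the claim is invoked). The main obstacle is the middle step: making precise the dictionary between the adic lifting condition of Assumption \ref{ass:2}, which is phrased over $k[[t]]$, and membership in the image of the geometric specialization over the big valuation ring $\O$ --- this forces one to descend to a finite level $\O_L$, to invoke Grothendieck existence in the projective case, and to track $p$-power versus prime-to-$p$ torsion carefully throughout.
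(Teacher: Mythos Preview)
The claim as printed contains a misprint: the paper's own proof establishes that the hypothesis forces $\NS(X_{\ov\eta})\otimes\Q \to \NS(X_{\ov{\eta_x}})\otimes\Q$ to be an isomorphism (equivalently, $x$ is non-jumping), not that $\spe_x\colon\NS(X_{\ov{\eta_x}})\otimes\Q \to \NS(X_{s_x})\otimes\Q=\NS(X_0)\otimes\Q$ is. The literal statement you are attempting to prove is in fact false whenever $G$ has positive rank: at any non-jumping $x$ one has $\rk H=\rk I$, so $H/I$ is torsion in $G$, and since the $\ov\ell_i$ form a basis of the torsion-free group $Mp^N G$, no nonzero $\sum c_i\ell_i$ can land in $H$; yet $\spe_x\otimes\Q$ is not surjective. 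Your ``remaining case $\rk\bar S=0$'' is exactly this situation, and your parenthetical assertion that it is ``incompatible with $\spe_x$ failing to be a rational isomorphism'' is wrong --- it is perfectly compatible, and the case cannot be disposed of.

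Even aiming at the corrected statement, your route is far heavier than the paper's. The paper's argument is three lines of linear algebra on the chain $\NS(X_{\ov\eta})\otimes\Q\hookrightarrow\NS(X_{\ov{\eta_x}})\otimes\Q\hookrightarrow\NS(X_0)\otimes\Q$: if the first inclusion is strict, pick an element in the difference, write its class in $G\otimes\Q$ in the basis $\ov\ell_i$, and clear denominators (twice: once to get integer coefficients, once to pass from the saturation back into $H$) to obtain a nonzero $\Z$-combination of the $\ell_i$ in $H$. Neither Assumption~\ref{ass:2}, nor Grothendieck existence, nor any dictionary between adic lifting and membership in $\im(\spe_x)$ is needed here; those ingredients enter the proof of Theorem~\ref{thm:3b} only afterwards. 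Indeed, your own middle paragraph is superfluous even within your framework: once you have produced $\sum c_i\ell_i\in S$ you are done, since by definition of saturation some integer multiple already lies in $H$, so the detour through ``$p^N S\subseteq H$'' buys nothing.
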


Assume then that there is a nonempty open set $U\subseteq B(\O)$ such that for each $x\in U$, no $\Z$-linear combination of $\ell_i$ lifts to $\O$. For $x\in U$, we have a sequence of injections \[\NS(X_{\ov\eta})\otimes \Q\hookrightarrow \NS(X_{\ov{\eta_x}})\otimes \Q\hookrightarrow \NS(X_0).\] Now, by construction the cokernel of $\NS(X_{\ov\eta})\otimes \Q \to \NS(X_0)\otimes \Q$ is freely generated as a $\Q$ vector space by the images of the $\ell_i$. If no $\Z$-linear combination of the $\ell_i$ is in the image of $ \NS(X_{\ov{\eta_x}})\hookrightarrow \NS(X_0)$, no $\Q$-linear combination of the $\ell_i$ will be in the image of $ \NS(X_{\ov{\eta_x}})\otimes \Q \hookrightarrow \NS(X_0)\otimes \Q$. It follows, in this case that $\NS(\X_{\ov\eta})\otimes \Q\hookrightarrow \NS(\X_{\ov{\eta_x}})\otimes \Q$ surjective and hence an isomorphism, thereby proving the  claim.

Thus, our goal is now to show that there exists an open $U\subseteq B(\O)$ such that for any $x\in U$ no $\Z$-linear combination of the $\ell_i$ is in the image of $\NS(X_{\ov\eta_x})\to \NS(X_0)$.

As Assumption \ref{ass:1} is true in our setup and we reduced to the case when $\X\to \B$ had a section, Proposition \ref{prop:lifting-bundles} implies that it suffices to show that there exists an open $U\subseteq B(\O)$ such that for any $x\in  U$, no $\Z$-linear combination of the $\ell_i$ lifts adically to $X_x$.

Assume for sake of contradiction, that for every nonempty open $U\subseteq B(\O)$ that there exists a $x\in U$ and nonzero $\ell=\sum_i a_i\ell_i\in \NS(X_0)$ such that $\ell$ lifts adically to $X_x$. 

We will now argue by induction by induction on $m$, that there are elements $\{\ell_{m,i}\}_{i=1}^n\subseteq \sum_i \Z\ell_i\subseteq \NS(X_0)$ such that $\sum_{i}\Q\ell_{m,i}=\sum_i\Q\ell_i\subseteq \NS(X_0)\otimes \Q$ and a nonempty open $U_m\subseteq B(\O)$ such that for every $x\in U_m$ and every nonzero $\ell \in \sum_{i=1}^m\Z_p \ell_i\subseteq \NS(X_0)\otimes \Z_p$, $\ell$ does not lift adically to $X_x$.

The base case of $m=0$ is trivial taking  $U_0=B(\O)$ and we clearly have a contradiction if we manage to prove the induction hypothesis for $m=n$. Thus we assume the induction hypothesis is true for $0\leq m<n$ and we will prove it is true for $m+1$. Thus let $U_m$ and $\{\ell_{m,i}\}_{i=1}^n$ be as in the induction hypothesis for $m$.

Again we may find a finite extension $K'$ of $K$, and power series ring $R'=\O_{K'}[[x_1,\ldots,x_n]]$ with $\O_K$-morphism $\Spec R'\to \B$ such that the $\O$-points of $\Spec R$ bijectively to the $\O$-points of an open subset $B'$ of $U_m$. Let $\mathfrak{m}_{R'}$ be the maximal ideal of $R'$. We replace $U_m$ to $B'$. Let $X'=X\times_B B'$ and  $X'_n=X'\otimes R'/\mathfrak{m}_{R'}^{n+1}$.

By the hypothesis to be contradicted, we may find $y\in B'$ and $\ell=\sum_ia_i\ell_i\neq 0 $ such that $\ell$ lifts adically to $X_y$. Replacing $\ell$ with a some multiple, we may assume that $\ell=\sum_i b_i \ell_{m,i}$ for some $b_i\in \Z$ and that $\ell$ lifts to $X_x$. By the induction hypothesis, it must be true that $b_i\neq 0$ for some $i>m$. Now, we find $\{\ell_{m+1,i}\}_{i=1}^n\subseteq \sum_i \Z\ell_{m,i}$ such that $\ell_{m+1,i}=\ell_{m,i}$ for $i\leq m$, $\ell_{m+1,i}=\ell$, and such that the $\{\ell_{m+1,i}\}_{i=1}^n$ span $\sum_i \Q\ell_{m,i}$ rationally.

Let $W=\sum_{i=1}^{m+1}\Z\ell_{m+1,i}\subseteq \NS(X_0)$. For $n\geq 0$, consider $W/p^n W$. Let $V_n$ be the subset of $W/p^n W$ consisting of elements whose image in $W/pW$ is not zero. For each $u\in V_n$, consider $X_n'/B_n'$ and $\Pic^u_{X'_n/B'_n}$ the component of the Picard scheme corresponding to a lift of $u$ to $U_{m+1}$. This is independent of lift of $u$ up to isomorphism as the $p^n$ power of any line bundle on $X_0'$ lifts to $X_n'$ by Remark \ref{rmk:lifting-p-power}.

\begin{claim}
  There exists an $n$ such that for each $u\in V_n$, the scheme theoretic image of $\Pic^u_{X_n'/B_n'}\to B'_n$ is not $B'_n$.  
\end{claim}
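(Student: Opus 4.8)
The plan is to argue by contradiction. Suppose no such $n$ exists, so that for every level $j\ge 1$ there is a class $u_j\in V_j$ with $\Pic^{u_j}_{X'_j/B'_j}\to B'_j$ scheme-theoretically surjective (throughout I write $j$ for the infinitesimal/$p$-power level, reserving $n$ for the number of variables in $R'$, and $B'_j=\Spec R'/\m_{R'}^{j+1}$). I would assemble the $u_j$ into a single $p$-adic class $u$, deduce that the formal Picard scheme $\Pic^{u}_{X'/R'}$ of Remark~\ref{rmk:p-adic-Picard} is scheme-theoretically surjective onto $\Spf R'$, feed this into Proposition~\ref{prop:liftingformal}, and then contradict the way the $\ell_i$ were chosen.

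\textbf{Producing a compatible system.} Since $R'=\O_{K'}[[x_1,\dots,x_n]]$ is a complete noetherian local ring with residue field $k$, Remark~\ref{rmk:p-adic-Picard} applies and supplies translation isomorphisms $\Pic^{u}_{X'_{j}/B'_{j}}\otimes_{B'_{j}}B'_{j-1}\cong\Pic^{u\bmod p^{j-1}}_{X'_{j-1}/B'_{j-1}}$ of $B'_{j-1}$-schemes. Using Assumption~\ref{ass:1} — which realizes these components, up to finite group schemes, as torsors under the smooth, hence flat, group scheme $G$ over the base — I would check that scheme-theoretic surjectivity at level $j$ descends to level $j-1$. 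Granting this, the finite sets $T_j=\{u\in (W/p^jW)\setminus pW:\Pic^{u}_{X'_j/B'_j}\to B'_j\text{ is scheme-theoretically surjective}\}$ are nonempty with well-defined reduction maps $T_j\to T_{j-1}$, so $\varprojlim_jT_j\ne\emptyset$ and there is $u\in W\otimes\Z_p$ with $u\not\equiv 0\pmod p$ such that every $\Pic^{u\bmod p^j}_{X'_j/B'_j}\to B'_j$ is scheme-theoretically surjective. Taking the inverse limit of the inclusions $R'/\m_{R'}^{j+1}\hookrightarrow\Gamma\bigl(\Pic^{u\bmod p^j}_{X'_j/B'_j},\O\bigr)$, using $R'=\varprojlim_jR'/\m_{R'}^{j+1}$ and the properness of $\Pic^u_{X'/R'}$ over $\Spf R'$ (valid since $X'\to B'$ is projective), I obtain $R'\hookrightarrow\Gamma\bigl(\Pic^u_{X'/R'},\O\bigr)$; that is, the scheme-theoretic image of $\Pic^u_{X'/R'}\to\Spf R'$ is all of $\Spf R'$.

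\textbf{Applying Proposition~\ref{prop:liftingformal} and deriving the contradiction.} By Cohen's structure theorem $\O_{K'}\cong k[[s]]$, so $R'\cong k[[s,x_1,\dots,x_n]]$ is module-finite over $k[[t,x_1,\dots,x_n]]$ (via $\O_K=k[[t]]\hookrightarrow\O_{K'}$), whence $\Pic^u_{X'/R'}\to\Spf k[[t,x_1,\dots,x_n]]$ is proper with full scheme-theoretic image. Proposition~\ref{prop:liftingformal} then produces, for every $\Spf k[[t]]$-morphism $z$ into $\Spf k[[t,x_1,\dots,x_n]]$, a finite extension $R$ of $k[[t]]$ and a $\Spf k[[t]]$-morphism $\Spf R\to\Pic^u_{X'/R'}\times_{\Spf k[[t,x_1,\dots,x_n]],\,z}\Spf k[[t]]$. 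Writing $\varphi_z\colon\Spec R\to B'$ for the induced arc (which passes through the closed point of $B'$, the only point over the closed point of $\Spf k[[t,x_1,\dots,x_n]]$), this datum corresponds, after Grothendieck algebraization over the complete discrete valuation ring $R$, to a line bundle $\L$ on $X'\times_{B'}\Spec R$ whose restriction to $X_0$ has class $u$ in $\NS(X_0)\otimes\Z_p$; in particular $u$ lies in the image of $\NS(\X_{\ov{\xi_z}})\otimes\Z_p\to\NS(X_0)\otimes\Z_p$, where $\xi_z$ is the generic point of $\varphi_z$. As $z$ varies, the closures $\ov{\{\xi_z\}}$ run over all sufficiently general curves in $B'$ through its closed point, so — since the locus in $B'$ on which $\rho$ exceeds $\rho(\X_{\ov\eta})$ lies in a countable union of proper closed subvarieties (\cite{MauPoo}*{Corollary 3.9}) — I may choose $z$ with $\rho(\X_{\ov{\xi_z}})=\rho(\X_{\ov\eta})$, so that $\NS(\X_{\ov\eta})\otimes\Q\to\NS(\X_{\ov{\xi_z}})\otimes\Q$ is an isomorphism. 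Then the image of $u$ in $\NS(X_0)\otimes\Q_p$ lies in the image of $\NS(\X_{\ov\eta})\otimes\Q_p$, i.e.\ $u$ maps to $0$ in $G\otimes\Q_p$, where $G=\coker(\NS(\X_{\ov\eta})\to\NS(X_0))$. But $u$ is a nonzero $\Z_p$-combination of $\ell_{m+1,1},\dots,\ell_{m+1,m+1}$, which are rationally independent in $\sum_i\Q\ell_i$, and $\sum_i\Q\ell_i$ injects into $G\otimes\Q$ because the $\ov{\ell_i}$ form a $\Z$-basis of the torsion-free group $Mp^NG$; hence the image of $u$ in $G\otimes\Q_p$ is nonzero, the desired contradiction. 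Therefore such an $n$ exists.

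\textbf{Main obstacle.} The delicate point is the descent of scheme-theoretic surjectivity from $B'_j$ to $B'_{j-1}$ in the second paragraph: the $B'_j$ are non-reduced Artinian schemes, and for a general proper morphism scheme-theoretic surjectivity need not be preserved under pullback along the nilpotent thickening $B'_{j-1}\hookrightarrow B'_j$, so one must use Assumption~\ref{ass:1} (flatness of the relevant $G$-torsors over the base) in an essential way rather than relying on formal properties of proper morphisms. A secondary point requiring care is arranging that some arc $\varphi_z$ has non-jumping generic point, which again rests on the countable-union structure of the jumping locus from \cite{MauPoo}*{Corollary 3.9}.
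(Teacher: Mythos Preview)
Your strategy up through forming the scheme-theoretically surjective formal scheme $\Pic^u_{X'/R'}\to\Spf R'$ matches the paper's; the paper dispatches the compatible-system step in one line via compactness of $\Z_p^{m+1}\setminus p\Z_p^{m+1}$, and you are right that surjectivity at level $j$ implying surjectivity at level $j-1$ deserves a word.

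The real gap, however, is in your derivation of the contradiction, not in what you flag as the main obstacle. You assert that a $\Spf R$-point of $\Pic^u_{X'/R'}$ algebraizes to a line bundle on $X'\times_{B'}\Spec R$ whose restriction to $X_0$ has class $u$ in $\NS(X_0)\otimes\Z_p$. This fails: the transition maps in the formal scheme $\Pic^u$ (Remark~\ref{rmk:p-adic-Picard}) are not the naive restrictions but involve twisting by auxiliary bundles $\L_j$, so a $\Spf R$-point is not a compatible system of line bundles in the ordinary sense. If you untwist so as to obtain a genuinely compatible system and then algebraize, the resulting bundle restricts on $X_0$ to the fixed integral class $[\M_0]$, not to $u$. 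Since $u\in W\otimes\Z_p$ is in general not integral, you cannot place $u$ in the image of $\NS(\X_{\ov{\xi_z}})\otimes\Z_p\to\NS(X_0)\otimes\Z_p$; all you actually obtain is that $u$ lifts \emph{adically} along $z$, which carries no direct information about the N\'eron--Severi group of the generic fibre.

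The paper avoids this by never attempting to push the $p$-adic class $\rho=u$ into $\NS$ of a generic fibre. Instead it runs a three-case analysis on where $\rho$ sits in $W\otimes\Z_p$: if $\rho\in\sum_{i\le m}\Z_p\ell_{m+1,i}$, the induction hypothesis on $U_m$ (which is a statement about adic lifting of $p$-adic classes) is contradicted directly; if $\rho\in\Z_p\ell_{m+1,m+1}$, then $\rho$ is a $p$-adic unit times the \emph{integral} class $\ell_{m+1,m+1}$, so one reduces to an actual line bundle and contradicts the construction of the $\ell_i$; in the mixed case one uses that $\ell_{m+1,m+1}$ also lifts adically at the previously chosen point $y\in U_m$, subtracts a suitable $\Z_p$-multiple to land in $\sum_{i\le m}\Z_p\ell_{m+1,i}$, and again invokes the induction hypothesis at $y$. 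Your argument uses neither the induction hypothesis nor the distinguished point $y$, and without them I do not see how to close the gap.
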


Assume for sake of contradiction that for each $n$ that there exists a $u\in V_n$, such that the scheme theoretic image of $\Pic^u_{X_n'/B_n'}\to B_n'$ is $B_n'$. By compactness of $\Z_p^n-p\Z_p^n$, we may find a system $u_n \in V_n$ such that the scheme theoretic image of $\Pic^{u_n}_{X_n'/B_n'}\to B_n'$ is $B_n'$, and the image of $u_{n+1}$ in $V_n$ is $u_n$. The system $\{u_n\}$ corresponds to a nonzero object in $W\otimes \Z_p$. Let $\rho$ denote the image of this object in $\NS(X_0)\otimes \Z_p$.  By Remark \ref{rmk:p-adic-Picard}, we can form the formal scheme $\Pic^\rho_{X/B}$. Call this formal scheme $\Ps$. We then have that $\Ps\to \B$ is scheme theoretically surjective.

By Proposition \ref{prop:liftingformal}, this implies that for every $x\in V(\O)$, the $p$-adic line bundle $\rho$ to lifts to $X_x$. If $\rho\in \sum_{i=1}^m\Z_p\ell_{m,i}=\sum_{i=1}^m\Z_p\ell_{m+1,i}\subseteq W\otimes \Z_p$, this gives a contradiction of the induction hypothesis. Assume next that $\rho\in \Z_p \ell_{m+1,m+1}$. By Proposition \ref{prop:lifting-bundles} as $X/B$ satisfies Assumption \ref{ass:1} with respect to $N$, for any $x\in V(\O)$ we have that $p^N\ell_{m+1,m+1}$ lifts along $x$. This would imply that any component of $\Pic_{X/B}$ corresponding to $p^N\ell_{m+1,m+1}$ surjects onto $B$. This is a contradiction as it would imply that that $p^N\ell_{m+1,m+1}$ lifts along the geometric generic fiber of $B$.

Lastly, if $\rho\in W\otimes \Z_p-(\sum_{i=1}^m\Z_p\ell_{m+1,i})-\Z_p\ell_{m+1,m+1}$, then $\rho$ and $\ell_{m+1,m+1}$, generate together a nonzero element of $W\otimes \Z$ and thus some nonzero element of $W\otimes \Z$ lifts adically to $X_y$ , which is a contradiction (recall $y\in U_m$ was such that $\ell_{m+1}$ lifted adically to $X_y$). Thus we have proved the claim.

Now choose $n$ such for each $u\in V_n$, the scheme theoretic image of $\Pic^u_{X_n'/B_n'}\to B_n'$ is not $B_n'$. 

Consider $\Res_{k[t]/t^{n+1}/k}B_n$. There is a closed proper subset of this scheme corresponding to Weil restriction of the closed subschemes given by the scheme theoretic images of the $\Pic^u$ for $u\in V_n$. Choose a point, $p$, that is not in that closed subscheme. There is an open subset $P\subseteq B(\O)$ corresponding to points of $B(\O)$ whose reduction mod $t^{n+1}$ agrees with $p$. 

\begin{claim}
  If $z\in  P$, no nonzero element of $\sum_{i=1}^{m+1}\Z_p \ell_{m+1,i}$ lifts adically to $X_z$
\end{claim}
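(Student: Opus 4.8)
The plan is to argue by contradiction, pushing the hypothesis forward along $X_z\to X_n'\to B_n'$ and then through the Weil restriction, so that a class lifting adically over $z$ forces the chosen point $p$ into a locus we have just excluded. Suppose $z\in P$ but some nonzero $\ell^{\ast}=\sum_{i=1}^{m+1}c_i\ell_{m+1,i}\in W\otimes\Z_p$ lifts adically to $X_z$. The classes in $W\otimes\Z_p$ lifting adically to $X_z$ form a $\Z_p$-submodule (closed under tensor product, dual, and $p$-adic limits of line bundles), and by the induction hypothesis applied to $z\in U_m$ this submodule meets $\sum_{i=1}^{m}\Z_p\ell_{m+1,i}$ trivially, hence is free of rank one, say $\Z_p\ell^{\diamond}$; write $\ell^{\diamond}=p^{v}\ell'$ with $\ell'$ primitive. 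Since $X/B$ satisfies Assumption \ref{ass:2} with respect to $N$ and has a section, Remark \ref{rmk:2prime} shows that if $v\ge N$ then $p^{N}\ell'$ also lifts adically to $X_z$, forcing $p^{N}\ell'\in\Z_p\ell^{\diamond}$ and hence $v\le N$; so in any case $v\le N$. We may and do take the level $n$ furnished by the preceding claim to satisfy $n>N$ (its proof applies verbatim with the indices ranging over levels $>N$), so that $v<n$.

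The heart of the argument is the $t$-adic translation. Choosing a finite extension $K''/K$ through which $z$ factors, $z$ becomes an $\O_{K''}$-point with $\O_{K''}$ a complete discrete valuation ring; since $z(\m_{R'}^{n+1})$ lies in a high power of $\m_{\O_{K''}}$, reducing modulo the appropriate power of the uniformizer produces a morphism $\bar z_n$ from an infinitesimal thickening of $\Spec k$ to $B_n'$, equivalently a $k$-point of $\Res_{k[t]/t^{n+1}/k}B_n'$, which by the definition of $P$ is exactly $p$. Because $\ell^{\diamond}$ lifts adically to $X_z$ it lifts in particular over $\bar z_n$: there is a line bundle on $X_n'$ pulled back along $\bar z_n$ whose class on $X_0$ agrees with $\ell^{\diamond}$, i.e.\ with $p^{v}\ell'$, modulo $p^{n}$; thus $\bar z_n$ lifts along $\Pic^{p^{v}\ell'\bmod p^{n}}_{X_n'/B_n'}\to B_n'$ and so factors through its scheme-theoretic image. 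Using that multiplication by $p^{v}$ is an isogeny of the relative Picard scheme $\Pic_{X_n'/B_n'}$, together with Assumption \ref{ass:2} to control the $p$-divisibility, one places this scheme-theoretic image inside that of $\Pic^{\ell'\bmod p^{n}}_{X_n'/B_n'}\to B_n'$, where now $\ell'\bmod p^{n}\in V_n$. Applying $\Res_{k[t]/t^{n+1}/k}$ we conclude that $p$ lies in the Weil restriction of the scheme-theoretic image of $\Pic^{\ell'\bmod p^{n}}_{X_n'/B_n'}\to B_n'$, contradicting the choice of $p$; this contradiction proves the claim.

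The main obstacle is the careful execution of the two translation steps. The first --- matching ``$\ell^{\diamond}$ lifts adically to $X_z$'' for the non-noetherian ring $\O$ with liftability over the honest infinitesimal thickening $\bar z_n$, and matching the latter with a $k$-point of the finite type scheme $\Res_{k[t]/t^{n+1}/k}B_n'$ --- is where the Weil-restriction device pays off, and it requires bookkeeping with the finite extension $K''$ and its ramification in the base. The second is the reduction to a class in $V_n$, i.e.\ the handling of $p$-divisible classes, which uses Assumption \ref{ass:2} and the isogeny property of $[p^{v}]$ on $\Pic_{X_n'/B_n'}$. Once both are in hand, the contradiction with the choice of $p$ is immediate.
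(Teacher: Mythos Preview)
Your argument has a genuine gap at the isogeny step. You assert that, using that $[p^{v}]$ is an isogeny on $\Pic_{X_n'/B_n'}$ together with Assumption~2, the scheme-theoretic image of $\Pic^{p^{v}\ell'\bmod p^{n}}_{X_n'/B_n'}\to B_n'$ is contained in that of $\Pic^{\ell'\bmod p^{n}}_{X_n'/B_n'}\to B_n'$. But the isogeny points the wrong way: $[p^{v}]$ is a morphism $\Pic^{\ell'}\to\Pic^{p^{v}\ell'}$ over $B_n'$, so it yields only $\mathrm{im}(\Pic^{\ell'})\subseteq\mathrm{im}(\Pic^{p^{v}\ell'})$, not the reverse. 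The containment you need would say that whenever $p^{v}\ell'$ lifts over a thickening in $B_n'$, so does $\ell'$; this is exactly the kind of ``division by $p$'' that can fail, and Assumption~2 does not help here because it applies to $\Spec k[[t]]$-points of the base, not to scheme-theoretic images over the Artinian scheme $B_n'$. Since for $v\ge 1$ the class $p^{v}\ell'\bmod p^{n}$ lies outside $V_n$ (its image in $W/pW$ vanishes), your reduction to $V_n$ is not achieved.

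The paper sidesteps this entirely and is much shorter. It first invokes Assumption~1 via Proposition~\ref{prop:compatible-padic-bundles} to upgrade the adic lift to an honest lift of $p^{N}\sum a_i\ell_{m+1,i}$, and then uses the deliberately arranged fact that each $\ell_{m+1,i}\in Mp^{N}\NS(X_0)$: writing $a_i=p^{w}b_i$ with not all $b_i$ divisible by $p$, one has $p^{N}\sum a_i\ell_{m+1,i}=p^{N+w}Mp^{N}\sum b_i\mu_i$ for suitable $\mu_i\in\NS(X_0)$, and Assumption~2 (Theorem~\ref{thm:2prime}) then gives that $\sum b_i\ell_{m+1,i}=p^{N}M\sum b_i\mu_i$ itself lifts to $X_z$. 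Now $\sum b_i\ell_{m+1,i}\bmod p^{n}$ already lies in $V_n$, so the construction of $P$ immediately yields the contradiction. In other words, the earlier choice of the $\ell_i$ inside $Mp^{N}\NS(X_0)$ was made precisely so that Assumption~2 can be applied \emph{before} passing to the Picard scheme, producing an element with primitive coefficients rather than forcing you to divide by $p^{v}$ afterwards. Your rank-one analysis, the retrofitting of $n>N$, and the isogeny manoeuvre are all unnecessary once this is used.
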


Assume for some $z\in P$, $\sum_{i=1}^{m+1} a_i\ell_{m+1,i}\neq 0$ lifts adically to $X_z$ and Assumption \ref{ass:1} is true with respect to $N$, Proposition \ref{prop:compatible-padic-bundles} implies $\sum_{i=1}^{m+1} p^Na_i\ell_{m+1,i}$ lifts to $X_z$.

Now as every $\ell_{m+1,i} \in Mp^{N} \NS(X_0)$ and Assumption $2$ with respect to $N$ holds, we see that some other $\sum_{i=1}^m b_i\ell_{m+1,i}$ lifts to $X_z$ where at least one $b_i$ is not divisible by $p$. This is impossible by construction of $P$. This proves the claim.

But we had a assumed by sake of contradiction that no such a $P$ could exist. Thus, there must exist a $U_{m+1}$ satisfying the induction hypothesis. Thus the induction is complete, and we have finished the proof of the the theorem.

\end{proof}

\section{Reductions}
\subsection{Addendum on $t$-adic topology}\label{sec:appen}
For this section let $K=k((t))$, and let $\ov{K}$ be an algebraic closure with its canonical topology. Let $X$ and $Y$ be smooth finite type $\ov{K}$-schemes, and let $f:X\to Y$ be a finite morphism. We will show that there is a dense Zariski open $U\subseteq X$ such that $f:(f^{-1}U)(\ov{K})\to U(\ov{K})$ is a local homeomorphism (for our purposes a map from an empty scheme is a local homeomorphism).

First, we may choose $U$ so that $f^{-1}U\to U$ is finite and flat. In fact we may even choose $U$ so that $f^{-1}U$ is a finite \'{e}tale morphism followed by a finite purely inseparable morphism. The desired statement is well-known the the \'{e}tale case, so we must only consider the finite inseparable case. Shrinking $U$, we may assume that that $f^{-1}U\to U$ is a composition $S_n=f^{-1}U\to S_{n-1}\to\cdots S_1\to S_0=U$, where $S_{i+1}=S_i[f_i^{1/p}]$ for some function $f_i$ on $S_i$. We may also assume that each $S_i$ is smooth again by potentially shrinking $U$ (note as $\ov{K}$ is algebraically closed, every reduced finite type scheme over it has a dense open subset on which it is smooth). Furthermore, we can clearly assume that the $f_i$ are not $p$ powers. In this way, we reduce to the case when $f^{-1}U=U[f^{1/p}]$ for some function $f$ on $U$ that is not a $p$th power. Since $U$ is smooth and $f$ is not a $p$th power, $df\neq 0$, thus potentially shrinking $U$ again, there is an \'{e}tale map $U\to \A^n$, where if we give $\A^n$ the coordinates $x_1,\ldots,x_n$, the function $x_1$ pulls back to $f$. As \'{e}tale maps induce local homeomorphisms on $\ov{K}$-points, we may reduce to the case when $Y=U\cong \A^n$ and $X=\A^n$ and $f:X\to Y$ is the map $(x_1,\ldots,x_n)\mapsto (x_1^p,\ldots, x_n)$, and for this case the claim is clear.
\subsection{Deducing the proper case from the projective case}\label{sec:projprop}

Now let $k$ be an algebraically closed field. Let $S$ be a smooth, finite type, connected $k$-scheme, and let $\X\to S$ be a proper map. Let $\eta$ be the generic point of $S$ and $k(\eta)$ the function field of $S$. Let $\ov{k(\eta)}$ be an algebraic closure of $k(\eta)$. Denote $X$ the fiber of $\X$ over $\ov{k(\eta)}$

By the alterations theorem, there is an alteration $f:Y\to X$ of $\ov{k(\eta)}$ schemes, from a regular, projective $\ov{k(\eta)}$-scheme $Y$. Let $X'$ be the normalization of $X$ in the function field of $Y$. Thus we have a factorization $Y\to X'\to X$. The map $X'\to X$ is finite. As $X'$ is normal, it is regular away from a closed set of codimension $2$. We have $X'\to X$ is then flat on the locus where $X'$ is regular, as may be seen for instance by using local criterion for flatness and a Koszul sequence argument (see Theorem 18.16 in \cite{Eisenbud:commalg}) . Thus there exists an open $U\subseteq X$ whose complement has codimension at least $2$ and such that over $U$ the map $X'\to X$ is finite flat. Now, $Y\to X'$ is a birational map of normal $\ov{k(\eta)}$-schemes and thus there is an open $V\subseteq X'$ such that $Y\to X'$ is an isomorphism over $V$. Take a maximal such $V$. The image of $X'-V$ in $X$ will also have codimension $2$. Thus we may shrink $U$ so that the map $Y\to X$ is finite and flat over $U$ and the complement of $U$ has codimension at least $2$. Let $d$ be the degree of $f$ over $U$.

Now consider the map $f^*:\Pic(X)\to \Pic(Y)$. We will describe another map $s:\Pic(Y)\to \Pic(X)$ such that $s\circ f^*$ is multiplication by $d$. First we have the restriction map $r_{f^{-1}U}:\Pic Y\to \Pic f^{-1}U$. Then we have the norm map $N:\Pic f^{-1}U\to \Pic U$. Finally, as the complement of $U$ has codimension at least $2$ and as $X$ is regular, the natural restriction map $r_U:\Pic X\to \Pic U$ is an isomorphism. Set $s:=r_U^{-1}\circ N \circ r_{f^{-1}U}$. That $s\circ f^*$ is multiplication by $d$ can be seen by composing both maps with $r_U$, at which point it is obvious. 

These constructions respect algebraic equivalence, thus induce maps $f^*:\NS(X)\to \NS(Y)$ and $s:\NS(Y)\to \NS(X)$.

Now we spread out $Y$ and $U$. Specifically, we find a smooth $k$-scheme $S'$ with a map $S'\to S$ finite over an open subset of $S$, a smooth $S'$ schemes $\Y$, and a map $f:\Y\to \X\times_S S'$ such that when these data is based changed along a chosen $k(S')\to \ov{k(\eta)}$ recover $Y$ and the map $f:Y\to X$. Furthermore, we may choose the data so that we may find an open subset $\mathcal{U}\subseteq \X\times_S S'$ whose complement has codimension $2$ on every fibre, and such that $\Y\to \X\times_S  S'$ is finite flat of degree $d$ over $\mathcal{U}$.

For any algebraically closed field, $L$, and $t\in S'(L)$, let $\Y_t$ and $\X_t$ be the pullbacks of $\Y$ and $\X$ along $t$. Thus as above we get maps $f^*_t:\Pic (\X_t)\to \Pic(\Y_t)$ and $s_t:\Pic(Y_t)\to \Pic(X_t)$ such that $s_t\circ f^*_x=d$, and similar maps on N\'{e}ron-Severi groups. 

Now let $x:\Spec k[[t]]\to S'$. The maps $f^*$ and $s$ are compatible with specialization, thus we get a commutative diagram
\begin{tikzcd}
  \NS(\X_{\ov{\eta}_x})\arrow[r,"f^*"]\arrow[d]& \NS(\Y_{\ov{\eta}_x})\arrow[r,"s"]\arrow[d]&\NS(\X_{\ov{\eta}_x})\arrow[d]\\
  \NS(\X_{s_x})\arrow[r,"f^*"]&\NS(\Y_{s_x})\arrow[r,"s"]&\NS(\X_{s_x}),
\end{tikzcd}
where the vertical maps are specialization.

Thus if the map $\NS(\Y_{\ov{\eta}_x})\otimes \Q\to \NS(\Y_{s_x})\otimes \Q$ is an isomorphism, then so is $\NS(\X_{\ov{\eta}_x})\otimes \Q\to \NS(\X_{s_x})\otimes \Q$. Thus, using Section \ref{sec:appen} to relate $S'$ and $S$, it follows thatTheorem \ref{thm:3a} in the proper case follows from Theorem \ref{thm:3a} in the projective case. Specifically, if $\B$ is an in the statement of Theorem \ref{thm:3a}, we take $S=\B$ in the above discussion and choose $S'$ to also be smooth over $C$. Then Theorem \ref{thm:3a} for $\mathscr{Y}$ implies Theorem \ref{thm:3a} for $\X$. 

Theorem \ref{thm:2} follows from the projective case using an analogous discussion, but replacing $\NS$ with $\Pic$ in the above commutative diagram. 
\subsection{Proof of Theorem \ref{thm:1}}\label{sec:spread}
\begin{prop}\label{prop:allfields}
  If Theorem \ref{thm:1} is true when $K=\ov{k(t)}$ where $k$ is an algebraically closed field of characteristic $p$, then Theorem \ref{thm:1} is true for all algebraically closed $K$ of characteristic $p$ not isomorphic to $\ov{\F}_p$.
\end{prop}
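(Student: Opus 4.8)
The plan is to spread the data $X\to S$ out over a finitely generated subfield of $K$ and then to observe that, since $K\not\cong\ov{\F_p}$, this subfield may be taken of positive, hence finite, transcendence degree over $\F_p$, so that its algebraic closure is abstractly of the form $\ov{k(t)}$ with $k$ algebraically closed --- precisely the case we are assuming. To set this up I would first replace $S$ by the irreducible component containing $\ov\eta$ and then by its reduction, so that $S$ is integral; since passing to a dense open subscheme of $S$ changes neither $\ov\eta$, nor $X_{\ov\eta}$, nor whether a $K$-point exists, we may freely shrink $S$. By a standard spreading-out argument there are a subfield $k_0\seq K$ finitely generated over $\F_p$, a geometrically integral finite-type $k_0$-scheme $S_0$, and a smooth morphism $X_0\to S_0$, such that $S\cong S_0\times_{k_0}K$ and $X\cong X_0\times_{k_0}K$ compatibly (shrink $S_0$ and, if needed, enlarge $k_0$ by a finite subextension of $K/k_0$ to arrange geometric integrality). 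Finally, since $K\not\cong\ov{\F_p}$ it contains an element transcendental over $\F_p$; adjoining it to $k_0$ we may also assume $\operatorname{trdeg}(k_0/\F_p)\geq 1$.

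Let $L\seq K$ be the algebraic closure of $k_0$ inside $K$, an algebraically closed field of characteristic $p$ with $d:=\operatorname{trdeg}(L/\F_p)=\operatorname{trdeg}(k_0/\F_p)$ finite and at least $1$. By the classification of algebraically closed fields up to isomorphism by transcendence degree over the prime field, $L$ is isomorphic to $\ov{k(t)}$ for an algebraically closed $k$: take $k=\ov{\F_p}$ if $d=1$ and $k=\ov{\F_p(x_1,\dots,x_{d-1})}$ if $d\geq 2$. Since Theorem \ref{thm:1} is invariant under isomorphism of the base field, its assumed case applies to the smooth morphism $X_0\times_{k_0}L\to S_0\times_{k_0}L$ of finite-type $L$-schemes and yields a point $s_L\in(S_0\times_{k_0}L)(L)=S_0(L)$ with $\rho\bigl((X_0\times_{k_0}L)_{s_L}\bigr)=\rho\bigl((X_0\times_{k_0}L)_{\ov{\eta_L}}\bigr)$, where $\ov{\eta_L}$ is a geometric generic point of $S_0\times_{k_0}L$.

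To conclude I would transport this back along $L\hookrightarrow K$, using the standard fact that the Picard number of a variety over an algebraically closed field is unchanged when that field is enlarged to a larger algebraically closed field (this is used implicitly throughout \cite{MauPoo}). Since $S_0$ is geometrically integral, $S$ and $S_0\times_{k_0}L$ are integral, and the geometric generic fibers $X_{\ov\eta}$ and $(X_0\times_{k_0}L)_{\ov{\eta_L}}$ are both base changes of the generic fiber of $X_0\to S_0$ along algebraically closed field extensions of $k_0(S_0)$; hence $\rho(X_{\ov\eta})=\rho\bigl((X_0\times_{k_0}L)_{\ov{\eta_L}}\bigr)$. Composing $s_L\colon\Spec L\to S_0$ with $\Spec K\to\Spec L$ gives a point $s\in S_0(K)=S(K)$, and since $(X_0)_s\cong(X_0\times_{k_0}L)_{s_L}\times_L K$ with $L$ algebraically closed, the same invariance gives $\rho(X_s)=\rho\bigl((X_0\times_{k_0}L)_{s_L}\bigr)$. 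Chaining the three equalities, $\rho(X_s)=\rho(X_{\ov\eta})$, which is the assertion of Theorem \ref{thm:1} for $K$.

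I expect no serious obstacle here. The one genuine point is the step in the second paragraph --- passing from an arbitrary algebraically closed $K\not\cong\ov{\F_p}$ to a base field of the special shape $\ov{k(t)}$ --- which works because finiteness of the data confines it to a finitely generated subfield whose algebraic closure, having finite transcendence degree at least $1$, is one of the permitted fields. The rest is bookkeeping: tracking geometric integrality so that the function fields in play exist, and invoking invariance of $\rho$ under extension of algebraically closed fields, both of which I would carry out carefully.
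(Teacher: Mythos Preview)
Your argument is correct, but it takes a different route from the paper's. The paper fixes the single field $L=\ov{\F_p(t)}$, embeds it into $K$, and spreads $X\to S$ out to a family $\X\to\mathscr{S}$ over a finite-type base $B/L$; applying the hypothesis to the whole $\X\to\mathscr{S}$ (as $L$-schemes) yields an $L$-point $x\in\mathscr{S}(L)$, which does \emph{not} directly live on $S$, so the paper then threads an \'etale-local section $B'\to\mathscr{S}$ through $x$, takes its generic point to get $y\in S(K)$, and sandwiches $\rho(\X_{\ov\eta})\le\rho(\X_y)\le\rho(\X_x)$ via specialization. You instead descend $X\to S$ to a field of definition $k_0\subseteq K$, take $L$ to be the algebraic closure of $k_0$ in $K$, and observe that since $L$ has finite transcendence degree $\ge 1$ it is abstractly of the form $\ov{k(t)}$; because $L\subseteq K$ already, the $L$-point you obtain maps directly to a $K$-point and you conclude by invariance of $\rho$ under extension of algebraically closed fields. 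Your approach is shorter and avoids both the \'etale section trick and the specialization inequalities; the trade-off is that you invoke the hypothesis for $k$ of possibly large (though finite) transcendence degree, whereas the paper's argument shows that the single case $k=\ov{\F_p}$ already suffices.
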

\begin{proof}
  Let $L=\ov{\F_p(t)}$.
  
  Let $K$ be an algebraically closed field of characteristic $p$ not isomorphic to $\ov{\F}_p$. As in the statement of Theorem \ref{thm:1}, let $X$ and $S$ be $K$-varieties, and let $X\to S$ be a smooth, proper $K$-morphism. As $K$ is perfect, $S$ is smooth on a dense open subset. We may and do replace $S$ with this subset.

  Next, as $K$ is not isomorphic to $\ov{\F}_p$, there is a map of fields $L\to K$ (choose an element of $K$ transcendental over $\F_p$, to get a map $\F_p(t)$, then extend it to a map from the algebraic closure).

  Spreading out, we get a finite type integral $L$-scheme $B$ with an injection of the function field $L(B)\to K$ extending the chosen injection $L\to K$, finite type smooth $B$-schemes $\X$ and $\mathscr{S}$ with $B$-morphism $\X\to \mathscr{S}$, such that base changing $\X\to \mathscr{S}$ to $K$ along $L(B)\to K$ recovers $X\to S$.

  Choose a geometric generic point $\ov{\eta}$ of $S$, which in turn also gives a geometric generic point of $\mathscr{S}$ from the morphism $S\to \mathscr{S}$.

  Now applying Theorem \ref{thm:1} for $L$ to $\X\to \mathscr{S}$ we find a point $x\in \mathscr{S}(L)$ such that $\rho(X_{\ov\eta})=\rho(\X_{\ov{\eta}})=\rho(\X_x)$. Next, as $\mathscr{S}\to B$ is smooth, we may find an integral $L$ scheme $B'$ with \'{e}tale map $B'\to B$ and $B$-morphism $B'\to \mathscr{S}$ such that $x$ is in the image of $B'(L)$. To see this is possible, note if $\mathscr{S}\cong \A^n_B$ then we may even get a section from $B$, and in general there is a Zariski open $U\subseteq \mathscr{S}$ containing the image of $x$ and an \'{e}tale map $\mathscr{S}\to \A^n_B$, and we pull back a section of $B$ containing the image of $x$ in $\A^n_B$.

  Consider the image of $B'$ in $\mathscr{S}$. It is a locally closed subscheme that maps generically finitely to $B$. We get a map $\Spec k(B')\to \mathscr{S}$ from considering the generic point of $B'$. As $k(B')$ is algebraic over $k(B)$, we may choose a map $k(B')\to K$ extending the chosen map $k(B)\to K$. This gives us a map $y:\Spec K\to \mathscr{S}$, which we view as a geometric generic point of the image of $B'$.

  Thus as $y$ is a specialization of $\ov{\eta}$, and $x$ a specialization of $y$, we get the inequalities $\rho(X_{\ov\eta})=\rho(\X_{\ov\eta})\leq \rho(\X_y)\leq \rho(\X_x)$. But $\rho(\X_{\ov\eta})=\rho(\X_x)$, so $\rho(X_{\ov\eta})=\rho(\X_y)$. Now $y:\Spec K\to \mathscr{S}$, gives a map $y':\Spec K\to S$, and $\X_y$ corresponds to $X_{y'}$ under the identification of the pullback of $\X\to \mathscr{S}$ along $\Spec K\to B$ with $X\to S$. Thus we have $\rho(X_{\ov\eta})=\rho(X_{y'})$ completing the proof.
\end{proof}
\begin{proof}[Proof of Theorem \ref{thm:1}]
  By Proposition \ref{prop:allfields}, we may assume that $K=\ov{k(t)}$ where $k$ is an algebraically closed field of characteristic $p$. Let $X\to B$ be a smooth map of finite type $K$-schemes with $B$ integral. We may spread out to get a curve $C$ over $k$, a smooth map of finite type $C$-schemes $\X\to \B$, and if $k(C)$ denotes the function field of $C$ an inclusion $k(C)\hookrightarrow K$ such that base-changing $\X\to \B$ along the associated map $\Spec K\to C$ we recover $X\to B$.

  By Theorem \ref{thm:3a} there is a dense Zariski open $U\subseteq \B$ and a $c\in C(k)$, such that if $\O_c$ is as in Setup \ref{set:1}, then $U(\O_c)_{\text{jumping}}\subseteq U(\O_c)$ is nowhere dense and nonempty. As $U(\O_c)\cap U(\ov{k(C)})$ is dense in $U(\O_c)$, this implies there is a $\B(\ov{k(C)})$-point outside the jumping locus. This thus gives a $K$-point outside the jumping locus proving the theorem.
\end{proof}

\section*{Acknowledgments}
We thank Bjorn Poonen for suggesting this problem and Matthew Morrow for conversations regarding parts of this paper.
\bibliography{ns}
\bibliographystyle{alpha}
\end{document}